\definecolor{gr}{rgb}   {0.,   0.69,   0.23 }
\definecolor{bl}{rgb}   {0.,   0.5,   1. }
\definecolor{mg}{rgb}   {0.85,  0.,    0.85}
\definecolor{yl}{rgb}   {0.8,  0.7,   0.}
\definecolor{or}{rgb}  {0.7,0.2,0.2}
\tikzset{
	ddot/.style={circle,fill=white,draw=black,inner sep=0pt,minimum size=0.8mm},
	>=stealth,
	}
\tikzset{
	ddot2/.style={circle,fill=black,draw=black,inner sep=0pt,minimum size=0.8mm},
	>=stealth,
	}
\newtheorem{theorem}{Theorem} [section]
\newtheorem{lemma}[theorem]{Lemma}
\newtheorem{proposition}[theorem]{Proposition}
\newtheorem{remark}[theorem]{Remark}
\DeclareMathOperator*{\supp}{supp}
\newcommand{\noi}{\noindent}
\newcommand{\Z}{\mathbb{Z}}
\newcommand{\R}{\mathbb{R}}
\newcommand{\T}{\mathbb{T}}
\let\P= \undefined
\newcommand{\P}{\mathbf{P}}
\newcommand{\E}{\mathbb{E}}
\renewcommand{\L}{\mathcal{L}}
\newcommand{\F}{\mathcal{F}}
\def\norm#1{\|#1\|}
\newcommand{\al}{\alpha}
\newcommand{\be}{\beta}
\newcommand{\dl}{\delta}
\newcommand{\nb}{\nabla}
\newcommand{\Dl}{\Delta}
\newcommand{\eps}{\varepsilon}
\newcommand{\kk}{\kappa}
\newcommand{\g}{\gamma}
\newcommand{\G}{\Gamma}
\newcommand{\ld}{\lambda}
\newcommand{\s}{\sigma}
\newcommand{\ft}{\widehat}
\newcommand{\cj}{\overline}
\newcommand{\dt}{\partial_t}
\newcommand{\ta}{\theta}
\renewcommand{\l}{\ell}
\renewcommand{\o}{\omega}
\renewcommand{\O}{\Omega}
\newcommand{\les}{\lesssim}
\newcommand{\ges}{\gtrsim}
\newcommand{\jb}[1]
{\langle #1 \rangle}
\newcommand{\ind}{\mathbf 1}
\newcommand{\D}{\mathcal{D}}
\newcommand{\N}{\mathbb{N}}
\renewcommand{\H}{\mathcal{H}}
\newtheorem*{ackno}{Acknowledgements}
\newcommand{\rhoo}{\vec{\rho}}
\newcommand{\muu}{\vec{\mu}}
\newcommand{\deff}{\stackrel{\textup{def}}{=}}
\newcommand{\Id}{\textup{Id}}
\numberwithin{equation}{section}
\numberwithin{theorem}{section}
\begin{document}
\baselineskip = 15pt

\title[Global dynamics for   2-$d$ stochastic NLW]
{Global dynamics for the two-dimensional stochastic nonlinear wave equations}

\author[M.~Gubinelli, H.~Koch, T.~Oh, and L.~Tolomeo]
{Massimiliano Gubinelli, Herbert Koch,  Tadahiro Oh,
and Leonardo Tolomeo}

\address{
Massimiliano Gubinelli\\
Hausdorff Center for Mathematics \&  Institut f\"ur Angewandte Mathematik\\
 Universit\"at Bonn\\
Endenicher Allee 60\\
D-53115 Bonn\\
Germany}
\email{gubinelli@iam.uni-bonn.de}

\address{Herbert Koch\\
Mathematisches Institut\\
 Universit\"at Bonn\\
Endenicher Allee 60\\
D-53115 Bonn\\
Germany
}

\email{koch@math.uni-bonn.de}

\address{
Tadahiro Oh, School of Mathematics\\
The University of Edinburgh\\
and The Maxwell Institute for the Mathematical Sciences\\
James Clerk Maxwell Building\\
The King's Buildings\\
Peter Guthrie Tait Road\\
Edinburgh\\ 
EH9 3FD\\
 United Kingdom}

\email{hiro.oh@ed.ac.uk}

\address{
Leonardo Tolomeo\\ 
The University of Edinburgh\\
and The Maxwell Institute for the Mathematical Sciences\\
James Clerk Maxwell Building\\
The King's Buildings\\
Peter Guthrie Tait Road\\
Edinburgh\\ 
EH9 3FD\\
 United Kingdom
 and 
Mathematical Institute, Hausdorff Center for Mathematics, Universit\"at Bonn, Bonn, Germany}

\email{tolomeo@math.uni-bonn.de}

\subjclass[2010]{35L71, 60H15}

\keywords{stochastic nonlinear wave equation; nonlinear wave equation; 
damped nonlinear wave equation; 
renormalization; 
white noise; Gibbs measure}

\begin{abstract}
We study global-in-time dynamics 
of the stochastic nonlinear wave equations (SNLW) with 
an additive space-time white noise forcing, 
posed on the two-dimensional torus.
Our goal in this paper is two-fold.~(i) By introducing a   hybrid argument, 
combining  the  $I$-method
 in the stochastic setting with a Gronwall-type argument,
we first prove global well-posedness
of the  (renormalized) cubic SNLW in the defocusing case.
Our argument yields a double exponential growth bound on 
the Sobolev norm of a solution.
(ii)~We then study  the stochastic damped nonlinear wave equations (SdNLW) in the defocusing case.
In particular, by applying Bourgain's invariant measure argument, 
we prove  almost sure global well-posedness of the 
(renormalized) defocusing
SdNLW with respect to the Gibbs measure 
and invariance of the Gibbs measure.

\end{abstract}

%
\maketitle
\tableofcontents

\baselineskip = 14pt

\newpage

\section{Introduction}

\subsection{Stochastic nonlinear wave equations}

In \cite{GKO1}, the first three authors  studied 
the following stochastic nonlinear wave equations (SNLW)  on 
the two-dimensional torus $\T^2 = (\R/ \Z)^2$
with an additive space-time white noise forcing:
\begin{align}
\begin{cases}
\dt^2 u + (1 -  \Dl)  u   + u^k = \xi\\
(u, \dt u) |_{t = 0} = (\phi_0, \phi_1)
\end{cases}
\quad (x, t) \in \T^2\times \R_+,
\label{SNLW1}
\end{align}

\noi
where $k\geq 2$ is an integer and 
$\xi(x, t)$ denotes a (Gaussian) space-time white noise on $\T^2\times \R_+$.
In the following, we  restrict our attention to the real-valued setting.
By introducing an appropriate time-dependent renormalization, 
they proved local well-posedness
of (the renormalized version of) SNLW \eqref{SNLW1}
with (almost) critical initial data.
Our main goal in this paper is
to construct global-in-time dynamics
to  SNLW  in the following two settings:

\medskip

\begin{itemize}
\item[(i)]
When $k = 3$, we introduce a  hybrid argument, 
combining  the so-called $I$-method \cite{CKSTT1, CKSTT2}
and a Gronwall-type argument \cite{BT2}, 
and prove global well-posedness of~\eqref{SNLW1}.
See Subsection~\ref{SUBSEC:GWP0}.

\smallskip

\item[(ii)]
For $k \in 2\N + 1$, we consider SNLW with a damping term.
More precisely, we study
the following stochastic damped nonlinear wave equation (SdNLW):
\begin{align}
\dt^2 u + \dt u + (1 -  \Dl)  u   + u^k = \sqrt 2\xi.
\label{SNLW1a}
\end{align}

\noi
This equation is known as the hyperbolic counterpart
of the stochastic quantization equation studied in the parabolic setting
\cite{DPD}.
By exploiting (formal) invariance of the Gibbs measure for the dynamics, 
we prove almost sure global well-posedness of SdNLW \eqref{SNLW1a}.
See Subsection \ref{SUBSEC:hyp}.

\end{itemize}

\medskip

The main difficulty in studying these problems, even locally in time,  
 comes
from the roughness of the space-time white noise.
The stochastic convolution $\Psi$, 
solving the linear stochastic wave equation:
\begin{equation}
 \dt^2 \Psi + (1 - \Dl)\Psi   =  \xi, 
\label{SNLW1b}
\end{equation}

\noi
is not a classical function but is merely a  distribution
for the spatial dimension $d \geq 2$.
In particular, 
there is  an issue in  making sense of powers $\Psi^k$ and,  
consequently,  of the full nonlinearity 
 $u^k$ in \eqref{SNLW1}. 
This requires us to modify the equation 
by introducing  a proper  renormalization.
In fact, 
for the models \eqref{SNLW1} and \eqref{SNLW1a} without renormalization, 
a phenomenon of triviality is known to hold
\cite{russo4, OOR}; 
roughly speaking, 
  extreme oscillations 
  make 
solutions to~\eqref{SNLW1} (or~\eqref{SNLW1a}) with regularized noises
tend to that to the  linear stochastic wave equation~\eqref{SNLW1b}
(or the trivial solution) as the regularization is removed.

In the following, let us briefly go over the local well-posedness argument in \cite{GKO1}
and introduce a renormalized equation.
See also \cite{OT2}.
We first express the stochastic convolution
(with the zero initial data)
in terms of a stochastic integral.
With  $\jb{\,\cdot\,} = (1+|\cdot|^2)^\frac{1}{2}$, 
let $S(t)$ denote 
the linear wave propagator:
\begin{equation}\label{lin1}
S(t) = \frac{\sin(t\jb{\nabla})}{\jb{\nabla}},
\end{equation} 

\noi
defined as a Fourier multiplier operator.
Namely, 
we set\footnote{Hereafter, we drop the harmless factor $2\pi$.} 
\[ 
S(t) f  = \sum_{n \in \Z^2 }
\frac{\sin(t\jb{n})}{\jb{n}}
\ft f (n) e_n, \]

\noi
where
$\ft{f}(n)$ is the Fourier coefficient of $f$
and
$e_n(x)= e^{in\cdot x}$.
Then, the stochastic convolution  $\Psi$, 
solving \eqref{SNLW1b}, is given by 
\begin{align} 
\Psi (t) 
 = \int_0^tS(t - t')dW(t'), 
\label{PsiN}
\end{align}

\noi
where  $W$ denotes a cylindrical Wiener process on $L^2(\T^2)$:
\begin{align}
W(t)
: = \sum_{n \in \Z^2 } B_n (t) e_n
\label{W1}
\end{align}

\noi
and  
$\{ B_n \}_{n \in \Z^2}$ 
is defined by 
$B_n(t) = \jb{\xi, \ind_{[0, t]} \cdot e_n}_{ x, t}$.
Here, $\jb{\cdot, \cdot}_{x, t}$ denotes 
the duality pairing on $\T^2\times \R$.
As a result, 
we see that $\{ B_n \}_{n \in \Z^2}$ is a family of mutually independent complex-valued\footnote
{In particular, $B_0$ is  a standard real-valued Brownian motion.} 
Brownian motions conditioned so that $B_{-n} = \cj{B_n}$, $n \in \Z^2$. 
By convention, we normalized $B_n$ such that $\text{Var}(B_n(t)) = t$.

Given $N \in \N$, 
we define the truncated stochastic convolution $\Psi_N = \P_N \Psi$, 
solving the truncated linear stochastic  wave equation:
\begin{align*}
\dt^2 \Psi_N + (1-\Dl)\Psi_N = \P_N \xi
\end{align*}

\noi
with the zero initial data.
Here,   $\P_N$ denotes the frequency cutoff onto 
the spatial frequencies $\{|n| \leq N\}$.
Then, for each fixed $x \in \T^2$ and $t \geq 0$,  
we see that  $\Psi_N(x, t)$
 is a mean-zero real-valued Gaussian random variable with variance
\begin{align}
\begin{split}
\s_N(t) &  \stackrel{\text{def}}{=} \E \big[\Psi_N(x, t)^2\big]
 =  \sum_{\substack{n \in \Z^2\\|n|\leq N}} 
\int_0^t \bigg[\frac{\sin((t - t')\jb{n})}{\jb{n}} \bigg]^2 dt'
\\
& =  \sum_{\substack{n \in \Z^2\\|n|\leq N}}
\bigg\{ \frac{t}{2 \jb{n}^2} - \frac{\sin (2 t \jb{n})}{4\jb{n}^3 }\bigg\}\sim  t \log N
\end{split}
\label{sig}
\end{align}

\noi
\noi
for  $N \gg 1$.
We point out that the variance $\s_N(t)$ is time dependent.
For any $t > 0$, 
we see that  $\s_N(t) \to \infty$ as $N \to \infty$, 
which can be used to show  that $\{\Psi_N(t)\}_{N \in \N}$
is almost surely unbounded in $W^{0, p}(\T^2)$ for any $1 \leq p \leq \infty$.

Let $u_N$ denote the solution to SNLW \eqref{SNLW1}
with the regularized noise $\P_N\xi$.
Proceeding with the following decomposition of $u_N$ (\cite{McKean, BO96, DPD}):
\begin{equation}
\label{A1}
u_N = \Psi_N + v_N.
\end{equation}

\noi
Then, we see that 
 the residual term $v_N$ satisfies 
 \begin{equation}
\dt^2 v_N + (1 - \Dl) v_N +  \sum_{\ell=0}^k {k\choose \ell} \Psi_N^\ell v_N^{k-\ell} = 0.
\label{SNLW2}
\end{equation}

\noi
Note that, 
due to the deficiency of regularity, 
the power $\Psi_N^\ell$ does not converge to any limit  as $N\to \infty$. 
This is where we  introduce
the Wick renormalization.
Namely, 
we replace  $\Psi^\ell_N$
by  its Wick ordered counterpart: 
\begin{align}
:\!\Psi^\l_N(x, t) \!: \, \stackrel{\text{def}}{=} H_\ell(\Psi_N(x, t);\sigma_N(t)), 
\label{Herm1}
\end{align}

\noi
where 
$H_\l(x; \s )$ is the Hermite polynomial of degree $\l$ with variance parameter $\s$.
See Section~\ref{SEC:2}.
Then, for each $\l \in \N$, the Wick power
$ :\! \Psi_N^\l \!:$
converges 
to a limit, denoted by 
$:\! \Psi^\l  \!: \,$, 
in $C([0,T];W^{-\eps,\infty}(\T^2))$
for any $\eps > 0$ and $T > 0$, 
almost surely (and also in $L^p(\O)$
for any $p < \infty$).
See Lemma \ref{LEM:Psi} below.
This Wick renormalization gives rise 
to the renormalized version of \eqref{SNLW2}:
 \begin{equation*}
\dt^2 v_N + (1 - \Dl) v_N +  \sum_{\ell=0}^k {k\choose \ell} :\!\Psi_N^\l\!:  v_N^{k-\ell} = 0.
\end{equation*}

\noi
By taking a limit as $N \to \infty$, 
we then obtain the limiting equation:
 \begin{equation}
\dt^2 v  + (1 - \Dl) v +  \sum_{\ell=0}^k {k\choose \ell} :\!\Psi^\l\!:  v^{k-\ell} = 0.
\label{SNLW4}
\end{equation}

\noi
Given the almost sure space-time regularity of the Wick powers
$:\!\Psi^\l\!:$, $\l = 1, \dots, k$, standard deterministic analysis with the Strichartz estimates
and the product estimates (Lemma~\ref{LEM:bilin})
yields local well-posedness of \eqref{SNLW4}
(for $v$).
Recalling the decomposition~\eqref{A1}, 
this argument also shows that 
the solution  $u_N = \Psi_N + v_N $ to the renormalized 
SNLW with the regularized noise $\P_N \xi$:
\[ \dt^2 u_N + (1 -  \Dl)  u_N   + :\!u_N^k\!: \, = \P_N \xi, \]

\noi
where the renormalized nonlinearity $:\!u_N^k\!: $ is interpreted as 
\[ :\!u_N^k \!: \, 
= \, :\!(\Psi_N + v_N)^k \!: \, 
= 
 \sum_{\ell=0}^k {k\choose \ell} :\!\Psi_N^\l\!:  v_N^{k-\ell} , 
\]

\noi
converges almost surely to  a stochastic 
process $u
 = \Psi + v$, 
where $v$ satisfies \eqref{SNLW4}.
It is in this sense that we say that 
the renormalized SNLW:
\begin{align*}
\dt^2 u + (1 -  \Dl)  u   +  :\!u^k\!: \,= \xi
\end{align*}

\noi
is locally well-posed
(for  initial data of suitable regularity).

\begin{remark}\label{REM:lin}\rm

The equation \eqref{SNLW1}
is also known as 
 the stochastic nonlinear Klein-Gordon equation.
In the following, however, we simply refer to \eqref{SNLW1} as 
the stochastic nonlinear wave equation.

In \cite{GKO1}, 
we treated the equation \eqref{SNLW1}
with the mass-less linear part  $\dt^2 u- \Dl u$.
Note that the same results in \cite{GKO1} with  inessential modifications
also hold for \eqref{SNLW1}
with the massive linear part  $\dt^2 u + (1- \Dl) u$.
Conversely, Theorem \ref{THM:GWP1} below
also holds for SNLW 
with the mass-less linear part  $\dt^2 u- \Dl u$.
We point out, however, that 
for our second main result (Theorem \ref{THM:GWP2}), 
we need to work with the massive linear part
in order to avoid a problem at the zeroth frequency in 
the Gibbs measure construction; see \cite{OT1}.
For this reason, we work with  the massive case in this paper.
\end{remark}

\subsection{Global well-posedness of the cubic SNLW}
\label{SUBSEC:GWP0}

Our first goal is to construct global-in-time dynamics
for the renormalized cubic SNLW.
In the following, we study \eqref{SNLW4} with $k = 3$:
 \begin{equation}
\dt^2 v  + (1 - \Dl) v +  
v^3 + 3 v^2 \Psi +  3v :\!\Psi^2\!: + :\!\Psi^3\!: \, = 0.
\label{SNLW6}
\end{equation}

\noi
In \cite{GKO1}, 
it was shown that \eqref{SNLW6}
is locally well-posed
in $\H^s(\T^2)  \stackrel{\text{def}}{=} H^s(\T^2)\times H^{s-1}(\T^2)$
for $s > \frac 14$.
Furthermore, the following blowup alternative holds almost surely;
either the solution $v$ exists globally in time
or there exists some finite time $T_* = T_*(\o) > 0$
such that  
\begin{align}
\lim_{t \nearrow T_*}\| \vec v(t) \|_{\H^\s} = \infty,
\label{BA}
\end{align}

\noi
where $\vec v = (v, \dt v)$
and $\s = \min(s, 1- \eps)$ for any small $\eps > 0$.
While the blowup alternative~\eqref{BA} is not explicitly proven in \cite{GKO1}, 
it easily follows as a consequence of the (deterministic) contraction 
argument used to study \eqref{SNLW6} in \cite{GKO1}.

In the parabolic setting, 
there are recent works \cite{MW1, MW2, GH, MoinatW}
on global well-posedness of the parabolic $\Phi^4_d$-model
via deterministic approaches.
The main ingredient  in \cite{MW1, MW2} is  
a (non-trivial) adaptation of a standard globalization argument
for a nonlinear heat equation
by   controlling  the (weighted) $L^p$-norm
of the smoother part of a solution (corresponding to $v$ in \eqref{SNLW6}).
Due to a weaker smoothing property, however, 
the situation is much more involved
in the case of the wave equation.

Essentially speaking, the only known way
to prove global well-posedness 
for the deterministic cubic nonlinear wave equation (NLW):
 \begin{equation}
\dt^2 v  + (1 - \Dl) v +  
v^3  = 0
\label{NLW1}
\end{equation}
 
\noi
 (except in the small data\footnote{This includes
the construction of solutions near a particular solution.}
regime)
is to exploit the 
energy $E(\vec v)$ given by 
\begin{align}
E(\vec v ) = \frac{1}{2}\int_{\T^2}\big( v^2 +  |\nb v|^2\big) dx
+ 
\frac{1}{2}\int_{\T^2} (\dt v)^2dx
+ \frac1{4} \int_{\T^2} v^{4} dx, 
\label{Hamil}
\end{align}

\noi
which is conserved for smooth solutions.
There are two sources of difficulty
in proving global well-posedness of the cubic SNLW \eqref{SNLW6}.

\smallskip

\begin{itemize}
\item[(i)] The first problem comes from the lack of regularity
of the solution $\vec v = (v, \dt v)$
to~\eqref{SNLW6}.
Due to the roughness of the stochastic convolution $\Psi$, 
we easily see that 
$\vec v  \in C([0, T]; \H^s(\T^2)) $
only 
for $s < 1$.
Namely, for a solution $\vec v$ to \eqref{SNLW6}, 
the energy $E(\vec v)$ is infinite.
In order to overcome this difficulty, we
propose to use the $I$-method 
introduced by Colliander-Keel-Staffilani-Takaoka-Tao \cite{CKSTT1, CKSTT2}.
See for example
\cite{Roy} on an application of the $I$-method to the cubic NLW \eqref{NLW1} on $\T^2$
in the deterministic setting.

\smallskip

\item[(ii)]
The second problem comes
from the fact that $v$ satisfies 
the cubic NLW  with perturbations, 
which results in the non-conservation of the energy $E(\vec v)$
even if $\vec v(t)$ were in $\H^1(\T^2)$.

\end{itemize}

\smallskip

\noi
The second problem could be easily remedied 
if $\Psi$ were slightly  smoother.
Given  $\Psi \in C(\R_+; L^\infty(\T^2))$, 
consider 
 \begin{equation}
\dt^2 v  + (1 - \Dl) v +  
v^3 + 3 v^2 \Psi +  3v \Psi^2  + \Psi^3 = 0.
\label{SNLW7}
\end{equation}

\noi
In this case, we can apply  the globalization argument  by Burq-Tzvetkov~\cite{BT2},
originally introduced  in the context
of the cubic NLW on $\T^3$ with random initial data.
Namely, by Cauchy-Schwarz inequality
along with \eqref{SNLW7} and Young's inequality, we have 
\begin{align*}
|\dt E(\vec v) |
& = \bigg|\int_{\T^2}
(\dt v )\big\{ \dt^2 v+ (1 - \Dl) v + v^3\big\} dx\bigg|\\
& \leq \big(E(\vec v)\big)^\frac{1}{2}
\bigg( \|\Psi\|_{C_TL^\infty_x}^2 \int_{\T^2}v^4 dx + \|\Psi\|_{C_T L^6_x}^6\bigg)^\frac{1}{2}\\
& \leq C(T, \Psi) \big( 1 + E(\vec v)\big) 
\end{align*}

\noi
for any given $T > 0$, 
where $C_TL^p_x = C([0, T]; L^p(\T^2))$.
Then, global well-posedness of~\eqref{SNLW7} in $\H^1(\T^2)$
follows from Gronwall's inequality.

As  described above, 
we can handle each of the difficulties (i) and (ii) by a standard approach if it occurs one at a time.
The main difficulty in proving
global well-posedness of the cubic SNLW
\eqref{SNLW6}
lies in the fact that we need to handle the difficulties
(i) and~(ii) {\it at the same time}.
This combination of the problems (i) and (ii) makes the problem significantly harder.

We now state our first main result.

\begin{theorem}\label{THM:GWP1}
Let $s > \frac 45$.
Then, the renormalized cubic SNLW \eqref{SNLW6} on $\T^2$
is globally well-posed in $\H^s(\T^2)$.
More precisely, 
given $(\phi_0, \phi_1) \in \H^s(\T^2)$, 
the solution $v$ to \eqref{SNLW6} 
exists globally in time, almost surely, 
such that 
$(v, \dt v) \in C(\R_+; \H^\s(\T^2))$, 
where
$\s = \min(s, 1- \eps)$ for any small $\eps > 0$.
\end{theorem}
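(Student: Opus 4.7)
The plan is to combine the $I$-method of Colliander-Keel-Staffilani-Takaoka-Tao \cite{CKSTT1,CKSTT2} with the Burq-Tzvetkov type Gronwall argument \cite{BT2} already sketched in the excerpt for the smooth version \eqref{SNLW7}. The key object is a modified energy defined via an $I$-operator: let $I = I_N$ be a smooth Fourier multiplier whose symbol $m_N(n)$ equals $1$ for $|n|\leq N$ and decays like $(N/|n|)^{1-s}$ for $|n|\geq 2N$, so that $I:H^s \to H^1$ boundedly with
\begin{align*}
\|\vec v\|_{\H^s} \les \|I\vec v\|_{\H^1} \les N^{1-s}\|\vec v\|_{\H^s}.
\end{align*}
Define
\begin{align*}
E(I\vec v) = \tfrac 12 \int_{\T^2}\bigl((Iv)^2 + |\nb Iv|^2 + (\dt Iv)^2\bigr)dx + \tfrac 14\int_{\T^2}(Iv)^4 dx,
\end{align*}
which is finite on $\H^s$ and controls $\|\vec v\|_{\H^s}^2$ up to the loss $N^{2(1-s)}$.

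\textbf{Growth of the modified energy.} Applying $I$ to \eqref{SNLW6}, pairing with $\dt Iv$, and integrating by parts yields
\begin{align*}
\frac{d}{dt}E(I\vec v)
= \int_{\T^2} \dt Iv \cdot\bigl[(Iv)^3 - I(v^3)\bigr]dx
- \int_{\T^2} \dt Iv \cdot I\bigl(3 v^2 \Psi + 3 v :\!\Psi^2\!: + :\!\Psi^3\!:\bigr)dx.
\end{align*}
The first (commutator) term vanishes when all frequencies in $v^3$ are $\leq N$, so a standard dyadic multilinear analysis (as in \cite{Roy} for the deterministic cubic NLW on $\T^2$) produces a gain
\begin{align*}
\bigg|\int_{\T^2}\dt Iv\cdot\bigl[(Iv)^3 - I(v^3)\bigr]dx\bigg|
\les N^{-\alpha(s)}\bigl(1 + E(I\vec v)\bigr)^{\gamma}
\end{align*}
for some $\alpha(s) > 0$ and $\gamma < 2$. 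The second (stochastic) term is treated in the spirit of \eqref{SNLW7}: combining the product estimate (Lemma \ref{LEM:bilin}) with the almost sure regularity $:\!\Psi^\l\!: \in C_T W^{-\eps,\infty}$ (Lemma \ref{LEM:Psi}) and with Sobolev embedding on $Iv \in H^1$, one gets
\begin{align*}
\bigg|\int_{\T^2}\dt Iv\cdot I\bigl(3v^2 \Psi + 3v :\!\Psi^2\!: + :\!\Psi^3\!:\bigr) dx\bigg|
\les C(T,\o)\bigl(1 + E(I\vec v)\bigr),
\end{align*}
up to additional commutator losses handled as above.

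\textbf{Gronwall and iteration.} Adding the two bounds and applying Gronwall gives, on any interval where the local solution from \cite{GKO1} exists,
\begin{align*}
E(I_N\vec v(t)) \leq \bigl(E(I_N \vec v(0)) + N^{-\alpha(s)}\, t\bigr) \exp\bigl(C(\o) t\bigr).
\end{align*}
Since $E(I_N\vec v(0)) \les N^{2(1-s)}\|\vec v(0)\|_{\H^s}^2$, iterating the local theory up to a target time $T$ on unit intervals requires the cumulative error after $\sim T$ steps to remain comparable to the initial modified energy, i.e.\ $N^{-\alpha(s)} T e^{C(\o)T} \les N^{2(1-s)}$. Choosing $N = N(T,\o)$ sufficiently large, this is realizable precisely when $\alpha(s) > 2(1-s)$ with the available multilinear gain on $\T^2$, which for the cubic nonlinearity pins down the threshold $s > \tfrac 45$. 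Combined with the blowup alternative \eqref{BA}, this yields global existence with $(v,\dt v)\in C(\R_+;\H^\s)$ and the double exponential bound $\|\vec v(t)\|_{\H^s}\les \exp(\exp(C(\o) t))$ advertised in the abstract.

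\textbf{Main obstacle.} As emphasized in the excerpt, the conceptual difficulty is that the two issues—roughness of $\vec v$ (requiring the $I$-method) and non-conservation from the Wick-power perturbations (requiring a Gronwall-type approach)—must be controlled simultaneously in the same modified-energy identity. Concretely, the multilinear analysis of the commutator $(Iv)^3 - I(v^3)$ and the product estimates for $I(v^{k-\l}:\!\Psi^\l\!:)$ must be made compatible with the same symbol $m_N$, and the resulting gain $N^{-\alpha(s)}$ must beat the combined cost $N^{2(1-s)}e^{C(\o)T}$ on the target interval. Tracking both losses through the iteration is what forces the regularity restriction $s > \tfrac 45$.
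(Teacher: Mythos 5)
Your proposal reproduces the paper's general strategy ($I$-method plus a Gronwall-type argument), but it glosses over precisely the two points on which the actual proof turns, and as written the argument does not close. First, your bound for the stochastic terms, $\big|\int_{\T^2}\dt Iv\cdot I(3v^2\Psi+3v:\!\Psi^2\!:+:\!\Psi^3\!:)\,dx\big|\les C(T,\o)(1+E(I\vec v))$, is not obtainable from Lemma \ref{LEM:Psi} and the product estimates alone: to make sense of $v^2\Psi$ with $\Psi\in C_TW^{-\eps,\infty}_x$ you must transfer derivatives onto $v^2$, and pulling the estimate back through the $I$-operator via \eqref{I2} costs a positive power of $N$, while the dependence on the energy is superlinear (of order $E^{3/2}$), not linear. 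This is exactly the obstruction the paper isolates for the term $A_2$ in \eqref{E1}. The paper's way out is the finer information of Lemma \ref{LEM:log} ($I\Psi(x,t)$ is Gaussian with variance $\les t\log N$, hence exponentially integrable) together with Lemma \ref{LEM:C4}\,(ii) with $\eta^{-1}\sim T\log N$, which converts the power-of-$N$ loss into a $\log N\sim\log E$ factor and yields only the logarithmically superlinear Gronwall inequality \eqref{K15}, of the schematic form $\dt E\les E\log E$.

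Second, because the controllable growth is $E\log E$ rather than linear, the iteration you propose --- fix one $N=N(T,\o)$ at the outset and run the local theory on unit intervals up to time $T$ --- fails: over a single unit interval the modified energy starting at size $N^\be$ can only be confined below $N^\al$ with $\al>\be$ (this is Proposition \ref{PROP:LGWP}), so with a fixed $N$ the energy escapes every fixed power of $N$ after $O(1)$ many steps. The paper must instead run the iteration with an increasing sequence $N_k=N_0^{\s^k}$ and verify the re-initialization condition \eqref{Z3}, $N_{k+1}^{2(1-s)}N_k^\al+N_k^{2\al}\ll N_{k+1}^\be$; it is the resulting system of constraints $2(1-s)<\be<\al\le 1-3(1-s)=3s-2$ that forces $s>\frac45$ (and produces the double exponential bound \eqref{Bd1}). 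By contrast, your closing condition $N^{-\al(s)}Te^{C(\o)T}\les N^{2(1-s)}$ is satisfied for every $s$ once $N$ is large (both sides improve as $N\to\infty$), so it does not pin down the threshold; the value $s>\frac45$ in your write-up is asserted rather than derived. To repair the proposal you would need to (i) replace the linear-in-$E$ bound on the perturbative terms by the $E\log E$ bound built on Lemma \ref{LEM:log} and exponential moments of $I\Psi$, and (ii) replace the fixed-$N$ iteration by the growing-$N_k$ scheme with the bookkeeping of \eqref{Z3}--\eqref{Z5}.
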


For simplicity, we only consider the case
$\frac 45 < s < 1$ such that $\s = s$.
The main approach is to combine the $I$-method with 
a Gronwall-type argument.
Let us first recall the main idea of the $I$-method.
Fix $0 < s < 1$.
Given $N \geq 1$, we define a 
smooth, radially symmetric, non-increasing (in $|\xi|$)
multiplier $m_N \in C^\infty(\R^2; [0, 1])$, satisfying 
\begin{equation}
m_N(\xi)=
\begin{cases}
1, & \text{if } |\xi|\le N, \\
\Big(\frac{N}{|\xi|}\Big)^{1-s}, & \text{if } |\xi|\ge2N.
\end{cases}
\label{I0a}
\end{equation} 

\noi
We then define the $I$-operator $I = I_N$
to be the Fourier multiplier operator with the multiplier $m_N$:
\begin{align}
\ft{I_Nf}(n)=m_N(n)\ft{f}(n).
\label{I0}
\end{align}

\noi
Then, we see that 
$I_N$ acts as the \underline{\it i}\hspace{0.6pt}dentity operator on low frequencies $\{ |n| \leq N\}$, 
while it acts as a fractional  
\underline{\it i}\hspace{0.6pt}ntegration operator of order $1-s$ on high frequencies $\{ |n| \geq 2N\}$.
From the definition, it is easy to see that 
 $I f \in H^1(\T^2)$ if and only if $f \in H^s(\T^2)$
with the bound:
\begin{align}
\|f\|_{H^s}\les \|I f\|_{ H^1} \les N^{1-s} \|f\|_{H^s}.
\label{I1}
\end{align}

\noi
Moreover, by the Littlewood-Paley theory, we have\footnote{Here, $W^{s, r}(\T^2)$
denotes the usual $L^r$-based Sobolev space (Bessel potential space) defined by the norm:
\[\| u\|_{W^{s, r}} = \| \jb{\nb}^s u \|_{L^r} = \big\| \F^{-1}( \jb{n}^s \ft u(n))\big\|_{L^r}.\]

\noi
When $r = 2$, we have $H^s(\T^2) = W^{s, 2}(\T^2)$.} 
\begin{align}
\|I f\|_{W^{s_0 + s_1, p}}\les  N^{s_1}\|f\|_{W^{s_0, p}}
\label{I2}
\end{align}

\noi
for any $s_0 \in \R$, $0 \leq s_1 \leq 1- s$,  and $1 < p < \infty$.

Let $ \frac 45 < s < 1$.
Given initial data $(\phi_0. \phi_1) \in \H^s(\T^2)$, 
we consider the $I$-SNLW:
 \begin{equation}
\dt^2 I v  + (1 - \Dl) I v +  
I (v^3) + 3 I(v^2 \Psi) +  3I(v :\!\Psi^2\!:) + I(:\!\Psi^3\!: ) = 0.
\label{SNLW8}
\end{equation}

\noi
The local well-posedness of \eqref{SNLW6} implies
local well-posedness of the $I$-SNLW \eqref{SNLW8}.
In view of the blowup alternative \eqref{BA}
and \eqref{I1}, 
our main task is to control the growth of 
the modified energy $E(I\vec v)$.
Note that there are two sources
for the non-conservation of the modified energy 
$E(I\vec v)$, reflecting the problems (i) and (ii) discussed above:
(i) The main part of the nonlinearity is 
$I(v^3)$,  not the cubic power $(Iv)^3$, 
and (ii) 
there are perturbative terms:
$3 I(v^2 \Psi) +  3I(v :\!\Psi^2\!:) + I(:\!\Psi^3\!: )$.
Indeed, a direct computation with \eqref{Hamil} and~\eqref{SNLW8}
gives
\begin{align}
 E(I\vec v)(t) -  E(I\vec v)(0) 
&  = \int_0^t \int_{\T^2} (\dt I v) \big\{ - I (v^3) + (Iv)^3\big\} dx dt \notag \\
& \hphantom{X}
-3 
\int_0^t \int_{\T^2} (\dt I v) I(v^2 \Psi) dx dt \notag \\
& \hphantom{X}
-3
\int_0^t \int_{\T^2} (\dt I v) I(v :\!\Psi^2\!:) dx dt \notag \\
& \hphantom{X}
- 
\int_0^t \int_{\T^2} (\dt I v) I(:\!\Psi^3\!: ) dx dt \notag \\
& = : A_1 + A_2 + A_3 + A_4.
\label{E1}
\end{align}

\noi
The first term $A_1$ represents
the main commutator part, resulting from the application of
the $I$-operator, 
and we estimate this part by establishing a certain commutator estimate
(as in the deterministic setting).
On the other hand, 
 the second, third, and fourth terms
$A_2$, $A_3$, and $A_4$ represent the contributions
from the perturbative terms in \eqref{SNLW8},
which are to be controlled by a Gronwall-type argument as above.\footnote{As
we see in Section \ref{SEC:GWP1}, 
these terms also contain the commutator parts as well.
For simplicity, we ignore this issue in this part of discussion.}
The worst contribution comes from~$A_2$.
In order to control this term, 
the standard estimate 
\eqref{I2} with the fact that $\Psi(t) \in W^{-\eps, \infty}(\T^2)$
is too crude since it loses a positive power of $N$.  We instead need
to use a finer regularity property of $\Psi(t)$, 
namely, it is logarithmically divergent from $L^p(\T^2)$.
See Lemma \ref{LEM:log} below.
At the end of the day, we end up 
with a Gronwall-type estimate, 
where the right-hand side has a logarithmically superlinear growth.
Roughly speaking, we obtain an estimate of the form:
\begin{align}
\dt  E(I\vec v)(t) \les 
 E(I\vec v)(t) \log\big( E(I\vec v)(t)\big).
 \label{E1a}
\end{align}

\noi
See \eqref{Y1} and \eqref{K15} below
for precise bounds.
We then implement an iterative argument, 
proceeding over time intervals  of fixed size, 
by {\it choosing an increasing sequence of 
the parameters $N_k$ for the $I$-operator}. 
See Subsection \ref{SUBSEC:3.2} for details.

\begin{remark}\rm 
(i) In a standard application of the $I$-method, one
first fixes the large target time $T \gg 1$
and then chooses a parameter $N = N(T) \gg1 $.
For our problem, this is not sufficient.
We instead need to choose 
 an increasing  sequence of 
the parameters $N_k$ for the $I$-operator
over different local-in-time intervals.
It would be of interest to investigate
a possible application of
this  new type of the $I$-method argument 
in the deterministic or random data setting
(other than that mentioned in the following remark).

\smallskip

\noi
(ii) 
A standard application of the $I$-method
 yields a polynomial (in time) growth bound on the Sobolev norm
of a solution.  See,  for example,  Section 6 in \cite{CKSTT2}.
A close examination of 
our hybrid
argument yields
a double exponential growth bound on the $\H^s$-norm of a solution.
 See Remark \ref{REM:bound}.
We point out that 
such a 
double exponential  bound would follow as a direct consequence of 
the estimate\footnote{Note that we do not quite obtain the estimate \eqref{E1a}
for the modified energy $ E(I\vec v)$.
See \eqref{Y1} and~\eqref{K15} below
for the actual bounds.}
 \eqref{E1a}.
 While it may be possible to improve this 
double exponential  bound, 
 we do not know how to do so at this point.
Such an argument 
would require a new globalization approach.
Lastly, 
we note that, 
while one may expect a subpolynomial growth in the deterministic setting, 
we  expect at best a polynomial growth bound
for  the (undamped) SNLW
due to the polynomial growth (in time)
 of the  stochastic convolution
 (which is essentially a Brownian motion in time).
Compare this with the damped case, 
where the invariant measure argument yields
a logarithmic growth bound;
see 
Remark \ref{REM:G2}.

\end{remark}

\begin{remark}\rm
In \cite{OT2}, 
Thomann and the third author proved
 almost sure global well-posedness of the renormalized defocusing 
cubic NLW on $\T^2$ with 
the random data distributed by 
the massive Gaussian free field.
The proof in \cite{OT2} was based on  (formal) invariance
of the Gibbs measure and Bourgain's invariant measure argument.
We point out that 
a slight modification of the proof of Theorem \ref{THM:GWP1}
provides another proof of this almost sure global well-posedness result
via a pathwise argument
(without using the invariant measure argument).

\end{remark}

\begin{remark}\label{REM:I3}\rm 
(i) Theorem \ref{THM:GWP1} establishes global well-posedness
of the renormalized cubic SNLW \eqref{SNLW6} on $\T^2$
in $\H^s(\T^2)$ for $s > \frac 45$, 
which leaves a gap to the local well-posedness threshold $s > \frac 14$ from \cite{GKO1}.
It may be possible to refine the $I$-method part (for example, by using analysis from \cite{Roy})
to lower regularities (to some extent).
We, however, decided not to pursue this issue since 
our globalization argument presented in Section \ref{SEC:GWP1}
is already quite involved, 
and our main goal in this part is to present this hybrid argument
of the $I$-method with a Gronwall-type argument
in its simplest form.

\smallskip

\noi
(ii) In a recent work 
\cite{Tolomeo2}, the fourth author extended Theorem \ref{THM:GWP1}
to the renormalized cubic SNLW  on $\R^2$.
For this problem, one needs to handle
not only the roughness of the noise but also its unboundedness.

\smallskip

\noi
(iii) 
In \cite{Forlano}, Forlano recently adapted our globalization argument
in studying 
the BBM equation with random initial data outside $L^2(\T)$.

\smallskip

\noi
(iv)  At this point, we do not know how to prove global well-posedness
of the renormalized  (undamped) SNLW with (super-)quintic nonlinearity.
Even with a smoother noise, 
one would need to use a trick 
introduced in 
\cite{OP1} to handle the high homogeneity.
See for example   \cite{MPTW}
for global well-posedness of the stochastic nonlinear beam equations
on $\T^3$.
\end{remark}

\begin{remark}\rm
In order to prove global well-posedness
of a stochastic PDE, 
we employ the $I$-method
to study the equation \eqref{SNLW6} 
for $v = u - \Psi$.
As such, our argument is essentially pathwise
and thus entirely deterministic, once we have 
a control on the relevant stochastic terms.

In a recent work \cite{CLO}, 
the third author with Cheung and Li
implemented the $I$-method
to prove global well-posedness of
stochastic nonlinear Schr\"odinger equations (SNLS)
below the energy space.
In estimating the growth of the modified energy, 
the authors used
Ito's lemma, which led to a careful stopping time argument
(rather than a usual application of the $I$-method, 
where one iterates a local-in-time argument
with a control on the modified energy).
The argument introduced  in \cite{CLO}
is  a 
natural\footnote{In particular, 
the authors in \cite{CLO} studied the growth of the modified energy 
of a solution $u$ (rather than the residual part $v = u - \Psi$)
via Ito's lemma, 
which is a natural extension of the 
$H^1$-global well-posedness result  on SNLS by 
 de Bouard and Debussche \cite{DD2} to the low regularity setting.}
extension of the $I$-method to the stochastic setting, which can be applied
to a wide class of stochastic dispersive equations.

\end{remark}

\subsection{Hyperbolic $\Phi_2$-model
and the Gibbs measure}
\label{SUBSEC:hyp}

In this subsection,  we consider 
the following stochastic damped nonlinear wave equation (SdNLW):
\begin{align}
\dt^2 u + \dt u + (1 -  \Dl)  u   + u^k = \sqrt 2\xi
\label{SNLW9}
\end{align}

\noi
for $k \in 2 \N + 1$.
This model is known as  the 
so-called canonical stochastic quantization equation
for the $\Phi^{k+1}_2$-model \cite{RSS}; see also a discussion below.
The local well-posedness argument from \cite{GKO1}
for the undamped (renormalized) SNLW 
is readily applicable 
to yield local well-posedness of 
 (the renormalized version of) SdNLW~\eqref{SNLW9}
 for any $k \in 2 \N + 1$.
Moreover, when $k = 3$, 
a slight modification of the proof of Theorem~\ref{THM:GWP1}
provides a deterministic (i.e.~pathwise) argument, 
establishing global well-posedness in the damped case.
As pointed out in Remark \ref{REM:I3}, 
such a deterministic argument is limited to $k = 3$ at this point.
In the damped case, 
 however, 
we can rely on a probabilistic argument
in order to construct global-in-times dynamics
for \eqref{SNLW9} with general  $k \in 2 \N + 1$.
More precisely, 
we construct global-in-time dynamics
for~\eqref{SNLW9},  
by exploiting (formal) invariance of 
 the 
 Gibbs measure with the density: 
\begin{align}
``d\rhoo(u,\dt u ) = Z^{-1}e^{-E(u,\dt u )}dud(\dt u)", 
\label{Gibbs1}
\end{align}

\noi
where $E(u, \dt u)$ denotes the energy (= Hamiltonian):
\begin{align}
E(u, \dt u ) = \frac{1}{2}\int_{\T^2}\big( u^2 +  |\nb u|^2\big) dx
+ 
\frac{1}{2}\int_{\T^2} (\dt u)^2dx
+ \frac1{k+1} \int_{\T^2} u^{k+1} dx
\label{Hamil2}
\end{align}

\noi
for the (deterministic undamped) NLW:
\begin{align}
\dt^2 u +  (1 -  \Dl)  u   + u^k = 0.
\label{NLW2}
\end{align}

\noi
By drawing an analogy to finite-dimensional Hamiltonian systems, 
the Gibbs measure $\rhoo$ was  expected to be invariant
under the NLW dynamics \eqref{NLW2}.
In  \cite{OT2}, 
the third author
and  Thomann showed that this is  indeed the case.
As for SdNLW \eqref{SNLW9}, 
we can view it as the superposition of 
the NLW dynamics \eqref{NLW2}
and the 
Ornstein-Uhlenbeck dynamics 
(for the  component $\dt u$):
\begin{align*}
\dt (\dt u) = - \dt u + 
\sqrt 2 dW, 
\end{align*}

\noi
each of which preserves   the Gibbs measure~$\rhoo$ in \eqref{Gibbs1}.
Hence, we expect 
the Gibbs measure~$\rhoo$ to be 
invariant under the dynamics of SdNLW \eqref{SNLW9}. 

By substituting \eqref{Hamil2} in the exponent
of \eqref{Gibbs1}, we see that the Gibbs measure $\rhoo$ decouples
into the $\Phi^{k+1}_2$-measure on $u$
and the white noise measure on $\dt u$.
The dynamical model \eqref{SNLW9} then corresponds to the 
canonical stochastic quantization equation\footnote{Namely, the Langevin equation
with the momentum $v = \dt u$.}
of the $\Phi^{k+1}_2$-model; see~\cite{RSS}. 
For this reason, 
we also refer to \eqref{SNLW9}
as the hyperbolic $\Phi^{k+1}_2$-model.

In order to make our discussion rigorous, 
let us introduce some notations.
Given $ s \in \R$, 
let $\mu_s$ denote
a Gaussian measure on periodic distributions,   formally defined by
\begin{align}
 d \mu_s 
   = Z_s^{-1} e^{-\frac 12 \| u\|_{{H}^{s}}^2} du
& =  Z_s^{-1} \prod_{n \in \Z^2} 
 e^{-\frac 12 \jb{n}^{2s} |\ft u(n)|^2}   
 d\ft u(n) .
\label{gauss0}
\end{align}

\noi
Note that $\mu_1$ corresponds to the massive Gaussian free field, 
while $\mu_0$ corresponds to the white noise.
We set 
\begin{align}
\muu_s = \mu_s \otimes \mu_{s-1} .
\label{gauss1}
\end{align}

\noi
In particular, when $s = 1$, 
 the measure $\muu_1$ is defined as 
   the induced probability measure
under the map:
\begin{equation*}
\o \in \O \longmapsto (u^1(\o), u^2(\o)),
 \end{equation*}

\noi
where $u^1(\o)$ and $u^2(\o)$ are given by
\begin{equation}\label{series}
u^1(\o) = \sum_{n \in \Z^2} \frac{g_n(\o)}{\jb{n}}e_n
\qquad\text{and}\qquad
u^2(\o) = \sum_{n \in \Z^2} h_n(\o)e_n.
\end{equation}

\noi
Here, 
   $\{g_n,h_n\}_{n\in\Z^2}$ denotes  a family of independent standard complex-valued  Gaussian random variables conditioned so that $\cj{g_n}=g_{-n}$ and $\cj{h_n}=h_{-n}$, 
 $n \in \Z^2$.
It is easy to see that $\muu_1 = \mu_1\otimes\mu_0$ is supported on
$\H^{s}(\T^2)$
for $s < 0$ but not for $s \geq 0$.

With  
 \eqref{Hamil2}, 
 \eqref{gauss0}, and 
 \eqref{gauss1},  
 we can formally write the Gibbs measure $\rhoo$ in \eqref{Gibbs1} as 
\begin{align}\label{Gibbs2}
d\rhoo(u,\dt u ) \sim e^{
- \frac1{k+1} \int_{\T^2} u^{k+1} dx} 
d\muu_1 (u, \dt u ).
\end{align} 

\noi
In view of the roughness of the support of $\muu_1$, 
the nonlinear term $\int_{\T^2} u^{k+1} dx $ in \eqref{Gibbs2} is not well defined
and thus a proper renormalization is required to give a meaning to \eqref{Gibbs2}.

Given a random variable $X$, 
let $\L(X)$ denote the law of $X$.
Suppose that $\L(u) = \mu_1$.
Then, given $N \in \N$, we have 
 \begin{align}\label{sN}
 \al_N \deff  \E\big[\big(\P_Nu(x)\big)^2\big] 
 =\sum_{\substack{n\in\Z^2\\|n|\leq N}}\frac{1}{\jb{n}^2}
\sim \log N 
 \end{align}

\noi
for $N \gg 1$, 
independent of $x\in\T^2$.
Given $N \in \N$, 
define 
 the truncated renormalized density:
\begin{align}
R_N(u) = 
\exp\bigg( - \frac{1}{k+1} 
\int_{\T^2} :\!(\P_N u)^{k+1}(x) \!: dx\bigg), 
\label{R1}
\end{align}

\noi
where the Wick power $:\!(\P_N u)^{k+1}(x) \!:$ is defined by  
\begin{align*}
:\!(\P_N u)^{k+1}(x) \!:
 \, \deff  H_{k+1} (\P_N u(x); \al_N).
\end{align*}

\noi
Then, it is known that $\{R_N \}_{N \in \N}$
forms a Cauchy sequence in $L^p(\mu_1)$
for any finite $p \geq 1$.
Thus, there exists a random variable 
 $R(u)$
 such that 
 \begin{equation}\label{exp2}
\lim_{N\rightarrow\infty} R_N(u)=R(u)
\qquad \text{in } L^p(\mu_1).
\end{equation}

\noi
See \cite{Simon, GJ, DPT1, OT1} for details.
In view of \eqref{R1} and \eqref{exp2}, 
we can write the limit  as
\begin{align*}
R(u) = 
\exp\bigg( - \frac{1}{k+1} 
\int_{\T^2} :\! u^{k+1}(x) \!: dx\bigg).
\end{align*}

\noi
By defining  the renormalized  truncated Gibbs measure:
\begin{align}\label{GibbsN}
d\rhoo_N(u,\dt u )= Z_N^{-1}R_N(u)d\muu_1(u,\dt u), 
\end{align}

\noi
we then conclude that 
the renormalized truncated  Gibbs measure $\rhoo_N$  converges, in the sense of \eqref{exp2}, 
 to the renormalized Gibbs measure $\rhoo$ given by
\begin{align}
\begin{split}
d\rhoo(u,\dt u )
& = Z^{-1} R(u)d\muu_1(u, \dt u )\\
& = Z^{-1} \exp\bigg( - \frac{1}{k+1} 
\int_{\T^2} :\! u^{k+1}(x) \!: dx\bigg)d\muu_1(u, \dt u ).
\end{split}
\label{Gibbs3}
\end{align}

\noi
Furthermore, 
the resulting Gibbs measure $\rhoo$ is equivalent\footnote{Namely, 
$\rhoo$ and $\muu_1$ are mutually absolutely continuous.} 
to the Gaussian measure $\muu_1$.

Next, 
we move onto the well-posedness theory 
of the hyperbolic $\Phi^{k+1}_2$-model \eqref{SNLW9}.
Let us first introduce  the following  renormalized truncated SdNLW:
\begin{align}
\dt^2 u_N   + \dt u_N  +(1-\Dl)  u_N 
+
\P_N\big(:\!(\P_N u)^{k} \!: \big) 
   = \sqrt{2} \xi 
\label{SNLW10}
\end{align} 

\noi
and its formal limit: 
\begin{align}
\dt^2 u   + \dt u  +(1-\Dl)  u 
+
:\!u^{k} \!: \, 
   = \sqrt{2} \xi  .
\label{SNLW11}
\end{align}

\noi
It is easy to check that the 
 renormalized  truncated Gibbs measure $\rhoo_N$
 is invariant under the truncated dynamics \eqref{SNLW10}.
See Section \ref{SEC:GWP2}.

We now state our second result.

 \begin{theorem}\label{THM:GWP2}
The renormalized SdNLW~\eqref{SNLW11} is almost surely globally well-posed with respect to the renormalized Gibbs measure~$\rhoo$ in~\eqref{Gibbs3}. Furthermore, the renormalized Gibbs measure $\rhoo$ is invariant under the dynamics.

More precisely, there exists a non-trivial stochastic process $(u,\dt u)\in C(\R_+;\H^{-\eps}(\T^2))$ for any $\eps>0$ such that, given any $T>0$, the solution $(u_N,\dt u_N)$ to 
the renormalized truncated SdNLW~\eqref{SNLW10} with 
the random initial data  
$(u_N, \dt u _N)|_{t = 0}$,  distributed according 
to the renormalized  truncated Gibbs measure $\rhoo_N$  in~\eqref{GibbsN}, converges in probability to 
some stochastic process $(u,\dt u)$ in $C([0,T];\H^{-\eps}(\T^2))$.
Moreover,  the law of $(u(t),\dt u(t))$ is given by the renormalized Gibbs measure $\rhoo$ in \eqref{Gibbs3}
 for any $t\ge 0$.
\end{theorem}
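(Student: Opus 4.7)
The plan is to implement Bourgain's invariant measure argument on the truncated equation \eqref{SNLW10} and then to pass to the limit $N\to\infty$. I would proceed in four main steps.

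First, I would establish local well-posedness of \eqref{SNLW10} and of its limit \eqref{SNLW11} for initial data in the support of $\muu_1$. Denote by $\Psi_N$ (resp.\ $\Psi$) the stochastic convolution for the truncated (resp.\ untruncated) damped linear wave equation, built as in \eqref{PsiN} with the damped wave propagator in place of $S(t)$. Writing $u_N=\Psi_N+v_N$ reduces \eqref{SNLW10} to a Wick-renormalized equation for $v_N$ whose nonlinearity is polynomial in $v_N$ with coefficients given by the Wick powers $:\!\Psi_N^j\!:$, $j=1,\dots,k$. The damped analogues of \eqref{sig} and Lemma~\ref{LEM:Psi} give $:\!\Psi_N^j\!:\,\to\,:\!\Psi^j\!:$ in $C([0,T];W^{-\eps,\infty}(\T^2))$, and a deterministic contraction in Strichartz-type spaces yields local well-posedness for $v_N$ and $v$ on a random time interval whose length depends only on $\|(u_N(0),\dt u_N(0))\|_{\H^{-\eps}}$ and on the size of the Wick powers on that interval.

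Second, I would check invariance of $\rhoo_N$ under the truncated dynamics \eqref{SNLW10}. Decompose \eqref{SNLW10} into a finite-dimensional Hamiltonian system on the low frequencies $\{|n|\le N\}$, preserving the truncated renormalized energy, coupled with an Ornstein--Uhlenbeck flow on the high frequencies $\{|n|>N\}$, preserving the high-frequency marginal of $\muu_1$. Liouville's theorem combined with conservation of the truncated energy shows that the low-frequency Hamiltonian flow preserves the density $R_N\,d\muu_1|_{\{|n|\le N\}}$; the high-frequency OU flow preserves $\muu_1|_{\{|n|>N\}}$ by standard generator computation. A splitting or generator argument then gives invariance of $\rhoo_N$ under the full coupled truncated dynamics.

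Third, I would globalize the truncated solutions \emph{uniformly in $N$} via Bourgain's argument, and then pass to the limit. By Step two, $\L(u_N(t),\dt u_N(t))=\rhoo_N$ for every $t\ge 0$, and the $L^p(\muu_1)$-bounds on $R_N$ (uniform in $N$ by \eqref{exp2}) combined with Gaussian tail estimates on $\muu_1$ yield uniform-in-$N$ probabilistic bounds on $\|(u_N(t),\dt u_N(t))\|_{\H^{-\eps}}$ and on the relevant Wick-power norms at each fixed time. Discretizing $[0,T]$ into intervals matching the local existence time from Step one, applying Chebyshev at each grid time, and summing via Borel--Cantelli over the time grid and over dyadic scales of $N$, I extract a single event of probability at least $1-\delta$ on which every truncated solution exists up to $T$ with a uniform-in-$N$ bound that allows the local theory to be iterated across $[0,T]$. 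On this event, the uniform bounds and the convergence $:\!\Psi_N^j\!:\,\to\,:\!\Psi^j\!:$ feed into a stability estimate for the $v_N$-equation to give convergence of $(u_N,\dt u_N)$ to a limit $(u,\dt u)\in C([0,T];\H^{-\eps}(\T^2))$ in probability, and invariance $\L(u(t),\dt u(t))=\rhoo$ follows from $\rhoo_N\to\rhoo$ (in $L^1$) together with invariance at the truncated level.

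The hardest step is the third: Bourgain's Chebyshev--Borel--Cantelli mechanism must deliver bounds strong enough to control both the pathwise $\H^{-\eps}$-norm of $(u_N,\dt u_N)$ \emph{and} the convergence rate of the Wick-power remainders, uniformly in $N$, over the fixed window $[0,T]$. This requires quantitative control on how the local existence time in Step one deteriorates with the sizes of the data and of the Wick powers, together with a careful bookkeeping ensuring that the double sum of exceptional probabilities, over the time grid and over $N$, remains summable.
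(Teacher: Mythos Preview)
Your overall architecture (local theory for the residual, invariance of $\rhoo_N$, Bourgain globalization, passage to the limit) matches the paper. Two points need fixing.

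In Step 1 there is a small but real issue: if your damped stochastic convolution is built as in \eqref{PsiN} with zero initial data, then $v_N=u_N-\Psi_N$ inherits the rough Gibbs-distributed data $(\phi_0,\phi_1)\in\H^{-\eps}$, and the contraction for $v_N$ cannot close in $H^{1-\eps}$. The paper instead defines $\Phi$ by \eqref{PhiN}, absorbing the linear evolution of the random data into the stochastic object, so that $v_N$ has \emph{zero} initial data; this is what makes the deterministic contraction (Proposition~\ref{PROP:LWP}, done with Sobolev embedding rather than Strichartz, yielding unconditional uniqueness) work.

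The substantive gap is in Step 2. The low-frequency projection of \eqref{SNLW10} is \emph{not} a Hamiltonian system: it still carries the damping $\partial_t P_N u_N$ and the noise $\sqrt{2}P_N\xi$ (see \eqref{SNLW11a}), so the truncated renormalized energy is not conserved and Liouville's theorem does not by itself give invariance of $R_N\,d\muu_{1,N}$. The correct mechanism is a further splitting of the \emph{low-frequency generator} $\L^N=\L_1^N+\L_2^N$: here $\L_1^N$ generates the deterministic undamped truncated NLW \eqref{SNLW17}, which is Hamiltonian and preserves $\vec\nu_N$ via Liouville plus energy conservation, while $\L_2^N$ is the Ornstein--Uhlenbeck generator acting only on the \emph{momentum} component $u_N^2=\partial_t P_N u_N$ (see \eqref{SNLW18}), which preserves $\vec\nu_N$ because $\vec\nu_N$ is white noise in that variable. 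Separately, the high-frequency part satisfies the linear stochastic damped wave equation \eqref{X8} and preserves $\muu_{1,N}^\perp$. Your write-up collapses the Hamiltonian/OU splitting into a low/high-frequency splitting, misplacing where the damping and noise act; once you correct this, the rest of your outline goes through essentially as in the paper.
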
 

In the context of the renormalized (deterministic) NLW:
\begin{align*}
\dt^2 u    +(1-\Dl)  u 
+:\!u^{k} \!: \, 
   = 0, 
\end{align*} 

\noi
the third author
with Thomann proved an analogous result; see  \cite{OT2}.

In view of the convergence of $\rhoo_N$ to $\rhoo$, 
the invariance of $\rhoo_N$ under 
the truncated SdNLW  dynamics~\eqref{SNLW10}, 
and Bourgain's invariant measure argument \cite{BO94,BO96},
Theorem \ref{THM:GWP2}
follows once we construct the limiting process $(u,\dt u)$ locally in time
with a good approximation property by 
the solution $u_N$ to \eqref{SNLW10}.
Furthermore, in view of the equivalence of 
 $\rhoo$, $\rhoo_N$,  and $\muu_1$, 
 it suffices to study  the renormalized SdNLW 
 \eqref{SNLW10} and \eqref{SNLW11} with the Gaussian 
 random initial data $(\phi_0, \phi_1)$ 
 with $\L(\phi_0, \phi_1) = \muu_1$.

 As in the previous sections,  we proceed with the first order expansion.
  For our damped model, 
we  let $\Phi$  be the solution to the linear stochastic damped wave equation:
\begin{align}\label{SdLW}
\begin{cases}
\dt^2 \Phi + \dt\Phi +(1-\Dl)\Phi  = \sqrt{2}\xi\\
(\Phi,\dt\Phi)|_{t=0}=(\phi_0,\phi_1),
\end{cases}
\end{align}

\noi
where $\L (\phi_0, \phi_1) = \muu_1$. 
Define the linear damped wave propagator $\D(t)$ by 
\begin{equation}
\D(t) = e^{-\frac{t}2}\frac{\sin\Big(t\sqrt{\frac34-\Dl}\Big)}{\sqrt{\frac34-\Dl}}
\label{lin2}
\end{equation} 

\noi
as a Fourier multiplier operator.
Then, the stochastic convolution $\Phi$ can be expressed as 
\begin{align} 
\Phi (t) 
 = \dt\D(t)\phi_0 + \D(t)(\phi_0+\phi_1)+ \sqrt{2}\int_0^t\D(t - t')dW(t'), 
\label{PhiN}
\end{align}

\noi
where  $W$ is as in \eqref{W1}.
A direct computation shows that $\Phi_N(x, t)=\P_N\Phi(x, t)$
 is a mean-zero real-valued Gaussian random variable with variance
\begin{align*}
 \E \big[\Phi_N(x, t)^2\big] = \E\big[\big(\P_N\Phi(x, t)\big)^2\big]
 = \al_N
\end{align*}

\noi
for any $t\ge 0$, $x\in\T^2$,  and $N \ge 1$,
where $\al_N$ is as in \eqref{sN}.
We point out that unlike $\s_N(t)$ in \eqref{sig}, 
the variance $\al_N$ is time independent.
This is due to the fact that the massive Gaussian free field $\mu_1$
is invariant under the dynamics
of   the linear stochastic damped wave equation \eqref{SdLW}.
 
Let  $u_N$ be the solution to \eqref{SNLW10}
 with $\L\big((u_N, \dt u_N)|_{t=0}\big) = \muu_1$.
 Then, by writing $u_N$ 
 as 
\begin{align}
u_N = v_N + \Phi
= (v_N +  \Phi_N) + \P_N^\perp \Phi_N,
\label{decomp2}
\end{align}

\noi
where $\P_N^\perp = \Id - \P_N$, 
we see that the dynamics of 
the   renormalized truncated SdNLW~\eqref{SNLW10}
decouples into 
the linear dynamics for the high frequency part given by $\P_N^\perp \Phi_N$
and 
the nonlinear dynamics for the low frequency part $\P_N u_N$:
\begin{align}
\dt^2 \P_N u_N   + \dt \P_N u_N  +(1-\Dl)  \P_N u_N 
+
\P_N\big(:\!(\P_N u)^{k} \!: \big) 
   = \sqrt{2} \P_N \xi  .
\label{SNLW11a}
\end{align}

\noi
Then,   
 the residual part $v_N = \P_N u_N - \Phi_N $ satisfies the following equation:
 \begin{align}
\begin{cases}
\dt^2 v_N + \dt v_N +(1-\Dl)v_N  +
 \sum_{\ell=0}^k {k\choose \ell} \P_N \big(:\!\Phi_N^\l\!:  v_N^{k-\ell}\big)
=0\\
(v_N,\dt v_N)|_{t = 0}=(0,0), 
\end{cases}
\label{SNLW12}
\end{align}

\noi
where the Wick power is defined by 
\begin{align}
:\!\Phi_N^\l (x, t) \!:
 \, \deff  H_{\l} (\Phi_N(x, t); \al_N).
\label{Herm3}
\end{align}

\noi
As in the undamped case discussed earlier, 
 for each $\l \in \N$, the Wick power
$ :\! \Phi_N^\l \!:$
converges 
to a limit, denoted by 
$:\! \Phi^\l  \!: \,$, 
in $C([0,T];W^{-\eps,\infty}(\T^2))$
for any $\eps > 0$ and $T > 0$, 
almost surely (and also in $L^p(\O)$
for any $p < \infty$).
See Lemma \ref{LEM:Psi} below.
This allows us to formally obtain the limiting equation:
 \begin{align}
\begin{cases}
\dt^2 v + \dt v +(1-\Dl)v  +
 \sum_{\ell=0}^k {k\choose \ell} :\!\Phi^\l\!:  v^{k-\ell}
=0\\
(v,\dt v)|_{t = 0}=(0,0).
\end{cases}
\label{SNLW13}
\end{align}

\noi
Note that the damped wave propagator $\D(t)$ in \eqref{lin2} satisfies
the same Strichartz estimates as the standard wave propagator $S(t)$ in \eqref{lin1}.
Hence,  by following the argument in \cite{GKO1}, 
we can prove local well-posedness of \eqref{SNLW13}, 
using the Strichartz estimates.
In Section \ref{SEC:GWP2}, 
we instead present a simple argument 
for  local well-posedness
of \eqref{SNLW13} based on Sobolev's inequality.
See Proposition \ref{PROP:LWP}.
This local well-posedness can also be applied 
to the truncated equation \eqref{SNLW12}, 
uniformly in $N \in \N$.
Once we prove (uniform in $N$) local well-posedness of~\eqref{SNLW12} and~\eqref{SNLW13}
and check invariance of 
the truncated Gibbs measure $\rhoo_N$ under 
the truncated SdNLW  dynamics~\eqref{SNLW10}, 
the rest of the proof of Theorem \ref{THM:GWP2}
follows from a standard application of Bourgain's invariant measure argument,
whose details we omit.
See, for example,~\cite{ORTz} for further details,
where Robert, Tzvetkov, and the third author
extended Theorem \ref{THM:GWP2}
to the case of two-dimensional compact Riemannian manifolds
without boundary.

\begin{remark}\label{REM:G2}\rm
(i) In Section \ref{SEC:GWP2}, 
we present a proof of  local well-posedness of \eqref{SNLW13} 
based on Sobolev's inequality
and construct a solution $v$ to \eqref{SNLW13}
in $C([0, T]; H^{1-\eps}(\T^2))$
for any $\eps >0$, where $T = T(\o)$ is an almost surely positive local existence time.
In this argument, we assume a priori  that a solution $v$ belongs
only to $C([0, T]; H^{1-\eps}(\T^2))$ (without intersecting with any auxiliary function space).
As a consequence, we obtain {\it unconditional uniqueness}
for the solution $v$ to \eqref{SNLW13}.
Unconditional uniqueness is a concept of uniqueness which does not depend on how solutions are constructed; see \cite{Kato}.
As a result, we obtain the uniqueness of the limiting process $u = \Phi + v$
in the entire class:
\[\Phi + C([0, T]; H^{1-\eps}(\T^2)).\]

\noi
Compare this with the solutions constructed in \cite{GKO1}, 
where we assume a priori that they also belong
 to some Strichartz space such that 
 the uniqueness statement in \cite{GKO1} is only conditional
 (namely in $C([0, T]; H^{1-\eps}(\T^2))$ intersected with the Strichartz space).

\smallskip

\noi
(ii) 
Let $(u, \dt u)$ the limiting process be constructed in Theorem \ref{THM:GWP2}.
Then, as a consequence of 
 Bourgain's invariant measure argument, 
 we obtain the following logarithmic growth bound:
\[ \| (u(t), \dt u(t)) \|_{\H^{-\eps}} \leq C(\o) \big(\log (1 + t)\big)^\frac{k}{2}\]

\noi
for any $t \geq 0$.
See \cite{ORTz} for details.

\end{remark}

\subsection{Remarks and comments}

(i) 
The stochastic nonlinear wave equations 
 have been studied extensively
in various settings; 
see \cite[Chapter 13]{DPZ14} for the references therein.
In recent years, we have witnessed
a rapid progress on the theoretical understanding 
of SNLW with singular stochastic forcing.
Since the work \cite{GKO1}
on local well-posedness of the renormalized SNLW on $\T^2$, 
there have been a number of works
on the subject:
 SNLW with a power-type nonlinearity on $\T^2$ and $\T^3$
 \cite{GKO2, ORTz, OOR, OO, OOT1, Bring, OOT2}
 and SNLW with trigonometric and exponential
 nonlinearities on $\T^2$
\cite{ORSW1, ORW, ORSW2}.
See also \cite{OT2, OPTz, OOTz}
for a related study on the deterministic NLW with random initial data.
We also  mention the work \cite{Deya1, Deya2} by Deya
on SNLW with 
more singular (both in space and time) noises on bounded domains in $\R^d$
and the work  \cite{Tolomeo2} by the fourth author on global well-posedness of the 
renormalized cubic SNLW on $\R^2$.

\smallskip

\noi
(ii)
In \cite{Tolomeo1}, the fourth author introduced 
a new approach to establish unique
ergodicity of Gibbs measures
for stochastic dispersive/hyperbolic equations.
In particular, ergodicity of the 
Gibbs measures was shown in \cite{Tolomeo1} for 
the cubic SdNLW on $\T$
and the cubic stochastic damped nonlinear beam equation on $\T^3$.
More recently, the fourth author 
further developed the methodology
and managed to prove 
ergodicity of the hyperbolic $\Phi^{k+1}_2$-model \eqref{SNLW11} 
for any odd integer $k \geq 3$; see
 \cite{Tolomeo3}.

\smallskip

\noi
(iii) For simplicity of the presentation, 
we only consider the regularization by  the sharp frequency cutoff $\P_N$ in this paper.
A straightforward modification allows us to treat
 regularization by a smooth mollifier.
Furthermore, by a standard argument, 
we can show that 
the limiting processes obtained through 
 regularization by a smooth mollifier
 agree with
 the limiting processes constructed in Theorems
\ref{THM:GWP1} and  \ref{THM:GWP2}
via the sharp frequency cutoff $\P_N$.
See \cite{OOTz} for such an argument
in the context of the deterministic NLW
with random initial data.

\section{Preliminary lemmas}
\label{SEC:2}

In this section, we introduce some notations and go over basic lemmas.

\subsection{Preliminary results from stochastic analysis}


In this subsection, 
by recalling some basic tools from probability theory and Euclidean quantum field theory
(\cite{Kuo, Nu, Shige, Simon}), 
we establish some preliminary estimates on the stochastic convolutions
and their Wick powers.
First, 
recall the Hermite polynomials $H_k(x; \s)$ 
defined through the generating function:
\begin{equation*}
F(t, x; \s) \stackrel{\text{def}}{=}  e^{tx - \frac{1}{2}\s t^2} = \sum_{k = 0}^\infty \frac{t^k}{k!} H_k(x;\s).
 \end{equation*}
	
\noi
For readers' convenience, we write out the first few Hermite polynomials:
\begin{align*}
\begin{split}
& H_0(x; \s) = 1, 
\quad 
H_1(x; \s) = x, 
\quad
H_2(x; \s) = x^2 - \s,   
\quad
 H_3(x; \s) = x^3 - 3\s x.
\end{split}
\end{align*}

Next, we recall the Wiener chaos estimate.
Let $(H, B, \mu)$ be an abstract Wiener space.
Namely, $\mu$ is a Gaussian measure on a separable Banach space $B$
with $H \subset B$ as its Cameron-Martin space.
Given  a complete orthonormal system $\{e_j \}_{ j \in \N} \subset B^*$ of $H^* = H$, 
we  define a polynomial chaos of order
$k$ to be an element of the form $\prod_{j = 1}^\infty H_{k_j}(\jb{x, e_j})$, 
where $x \in B$, $k_j \ne 0$ for only finitely many $j$'s, $k= \sum_{j = 1}^\infty k_j$, 
$H_{k_j}$ is the Hermite polynomial of degree $k_j$, 
and $\jb{\cdot, \cdot} = \vphantom{|}_B \jb{\cdot, \cdot}_{B^*}$ denotes the $B$-$B^*$ duality pairing.
We then 
denote the closure  of the span of
polynomial chaoses of order $k$ 
under $L^2(B, \mu)$ by $\mathcal{H}_k$.
The elements in $\H_k$ 
are called homogeneous Wiener chaoses of order $k$.
We also set
\[ \H_{\leq k} = \bigoplus_{j = 0}^k \H_j\]

\noi
 for $k \in \N$.

Let $L = \Dl -x \cdot \nabla$ be 
 the Ornstein-Uhlenbeck operator.\footnote{For simplicity, 
 we write the definition of the Ornstein-Uhlenbeck operator $L$
 when $B = \R^d$.}
Then, 
it is known that 
any element in $\mathcal H_k$ 
is an eigenfunction of $L$ with eigenvalue $-k$.
Then, as a consequence
of the  hypercontractivity of the Ornstein-Uhlenbeck
semigroup $U(t) = e^{tL}$ due to Nelson \cite{Nelson2}, 
we have the following Wiener chaos estimate
\cite[Theorem~I.22]{Simon}.
See also \cite[Proposition~2.4]{TTz}.

\begin{lemma}\label{LEM:hyp}
Let $k \in \N$.
Then, we have
\begin{equation*}
\|X \|_{L^p(\O)} \leq (p-1)^\frac{k}{2} \|X\|_{L^2(\O)}
 \end{equation*}
 
 \noi
 for any $p \geq 2$
 and any $X \in \H_{\leq k}$.

\end{lemma}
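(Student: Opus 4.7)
The plan is to reduce the estimate to Nelson's hypercontractivity theorem for the Ornstein--Uhlenbeck semigroup $U(t) = e^{tL}$, which asserts that $U(t) : L^2(B,\mu) \to L^p(B,\mu)$ is a contraction whenever $e^{2t} \geq p-1$. Given this deep input, the passage to the lemma is mostly algebraic, with the only subtlety being the handling of the inhomogeneous chaos space $\H_{\leq k}$ rather than a single $\H_k$.

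First, I would decompose $X \in \H_{\leq k}$ orthogonally as $X = \sum_{j=0}^{k} X_j$ with $X_j \in \H_j$. Since each $X_j$ is an eigenfunction of $L$ with eigenvalue $-j$, one has $U(t) X_j = e^{-jt} X_j$ for all $t \geq 0$. The key trick is to introduce the auxiliary element
\[
Y := \sum_{j=0}^{k} e^{jt} X_j ,
\]
which, by construction, satisfies $U(t) Y = \sum_j e^{jt} e^{-jt} X_j = X$. This realizes $X$ as the image of an $L^2$ element under the hypercontractive semigroup, even though $X$ itself is not a homogeneous chaos.

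Next, I would choose $t = \tfrac{1}{2} \log(p-1)$, so that $e^{2t} = p-1$ and Nelson's inequality applies:
\[
\|X\|_{L^p(\O)} \,=\, \|U(t) Y\|_{L^p(\O)} \,\leq\, \|Y\|_{L^2(\O)}.
\]
By orthogonality of chaoses of distinct orders in $L^2(\O)$, together with the trivial bound $e^{2jt} \leq e^{2kt}$ for $0 \leq j \leq k$, I get
\[
\|Y\|_{L^2(\O)}^2 \,=\, \sum_{j=0}^{k} e^{2jt} \|X_j\|_{L^2(\O)}^2 \,\leq\, e^{2kt} \sum_{j=0}^{k} \|X_j\|_{L^2(\O)}^2 \,=\, (p-1)^{k} \|X\|_{L^2(\O)}^2 .
\]
Combining the two displays yields the claimed bound.

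There is no substantive obstacle once Nelson's theorem is available: all the analytic content is encoded there, and the only point requiring care is the inhomogeneity of $\H_{\leq k}$, handled by the rescaling $X \mapsto Y$ which exploits the non-negativity of the spectrum of $-L$ on $\H_{\leq k}$. The estimate is saturated by the top chaos $\H_k$, which is why the exponent $k$ in the constant $(p-1)^{k/2}$ governs the whole of $\H_{\leq k}$.
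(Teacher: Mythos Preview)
Your proof is correct and follows precisely the approach the paper indicates: the paper does not spell out a proof but simply records the lemma as a consequence of Nelson's hypercontractivity for the Ornstein--Uhlenbeck semigroup, citing \cite{Simon} and \cite{TTz} for details. Your argument---decomposing $X$ into homogeneous chaoses, inverting the semigroup on each component via $Y = \sum_j e^{jt} X_j$, choosing the critical time $t = \tfrac{1}{2}\log(p-1)$, and using $L^2$-orthogonality of the $\H_j$---is exactly the standard derivation found in those references.
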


Before proceeding further, we recall the following corollary to 
the Garsia-Rodemich-Rumsey inequality
(\cite[Theorem A.1]{FV}).

\begin{lemma}\label{LEM:GRR}

Let $(E, d)$ be a metric space.
Given $u \in C([0, T]; E)$, 
suppose that there exist
$c_0 > 0$, $\ta \in(0, 1)$, and $\al > 0$
such that 
\begin{align}
\int_{t_1}^{t_2}
\int_{t_1}^{t_2} \exp \bigg\{c_0 \bigg( \frac{d(u(t), u(s))}{|t-s|^\ta}\bigg)^\al\bigg\} dt ds
= : F_{t_1, t_2} < \infty
\label{G1}
\end{align}

\noi
for any $0 \leq t_1 \leq t_2 \leq T$ with $t_2 - t_1 \leq 1$.
Then, we have
\begin{align}
 \exp \bigg\{\frac{c_0}{C} \bigg( 
 \sup_{t_1 \leq s < t \leq t_2} \frac{d(u(t), u(s))}{\zeta(t-s)}\bigg)^\al\bigg\} 
\leq  \max( F_{t_1, t_2}, e)
\label{G2}
\end{align}

\noi
for any $0 \leq t_1 \leq t_2 \leq T$
with $t_2 - t_1 \leq 1$,  
where $\zeta(t)$ is defined by 
\begin{align}
 \zeta(t) = \int_0^t \tau^{\ta - 1} \bigg\{\log\Big( 1+ \frac 4{\tau^2}\Big)  \bigg\}^\frac{1}{\al} d\tau. 
 \label{G2a}
\end{align}

\end{lemma}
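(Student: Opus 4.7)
The plan is to derive this lemma as a direct application of the classical Garsia--Rodemich--Rumsey (GRR) inequality with the specific choices $\Psi(x) = e^{c_0 x^\al} - 1$ and $p(r) = r^\ta$. Both $\Psi$ and $p$ are continuous and strictly increasing from $0$ to $\infty$ (since $c_0, \al, \ta > 0$), so the GRR hypotheses are met. Under these choices, \eqref{G1} can be rewritten as
\[
\int_{t_1}^{t_2}\!\!\int_{t_1}^{t_2} \Psi\bigg(\frac{d(u(t), u(s))}{p(|t-s|)}\bigg) dt\, ds = F_{t_1, t_2} - (t_2 - t_1)^2 \leq F_{t_1, t_2}.
\]
Applying GRR on $[t_1, t_2]$, together with the explicit formula $\Psi^{-1}(y) = \big(c_0^{-1}\log(1+y)\big)^{1/\al}$, I would then obtain the pointwise bound
\[
d(u(t), u(s)) \leq 8\ta \int_0^{t-s} \bigg(\frac{\log(1 + 4 F_{t_1, t_2}/u^2)}{c_0}\bigg)^{1/\al} u^{\ta - 1} du
\]
for any $t_1 \leq s \leq t \leq t_2$.

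The main work is then to disentangle the dependence on $F_{t_1, t_2}$ from the singularity of the integrand at $u = 0$, so as to recognize the integral defining $\zeta(t-s)$ in \eqref{G2a}. Writing $\wt F = \max(F_{t_1, t_2}, e) \geq 1$, the elementary bound $1 + 4\wt F/u^2 \leq \wt F \cdot (1 + 4/u^2)$ yields
\[
\log\bigg(1 + \frac{4 \wt F}{u^2}\bigg) \leq \log\bigg(1 + \frac{4}{u^2}\bigg) + \log \wt F,
\]
after which the inequality $(a + b)^{1/\al} \leq C_\al (a^{1/\al} + b^{1/\al})$ separates the two contributions. Integrating against $u^{\ta-1} du$ on $[0, t-s]$, the first piece produces exactly $\zeta(t-s)$, while the second contributes a term of size $(\log \wt F)^{1/\al} (t-s)^\ta$.

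To close the argument I would invoke the elementary lower bound $\zeta(\tau) \geq \tau^\ta/\ta$, valid for $\tau \in (0, 1]$---which follows from $\log(1 + 4/\tau^2) \geq \log 5 > 1$ on that range---to absorb the latter term into the former (using also that $\log \wt F \geq 1$, since $\wt F \geq e$). This would give
\[
d(u(t), u(s)) \leq \frac{C}{c_0^{1/\al}}\, \zeta(t-s)\, (\log \wt F)^{1/\al}
\]
uniformly for $t_1 \leq s < t \leq t_2$ with $t_2 - t_1 \leq 1$. Taking the supremum over such $s, t$, raising to the $\al$-th power, and exponentiating---absorbing the resulting constant into a new $C$---is then precisely the statement \eqref{G2}.

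Conceptually, the calculation is mechanical once the correct pair $(\Psi, p)$ is chosen; the one technical point requiring care is the multiplicative splitting of the logarithm, which crucially relies on $\wt F \geq 1$ and explains why the right-hand side of \eqref{G2} carries $\max(F_{t_1, t_2}, e)$ rather than $F_{t_1, t_2}$ itself. Keeping the various absolute constants coherent through the successive power and exponential rearrangements is the only mild bookkeeping obstacle; beyond that, I do not foresee any substantial difficulty.
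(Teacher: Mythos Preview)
Your proposal is correct and follows essentially the same route as the paper: apply the Garsia--Rodemich--Rumsey inequality with $\Psi(x)=e^{c_0 x^\al}-1$ and $p(r)=r^\ta$, then split the logarithm to extract $\zeta$. The only cosmetic difference is that the paper uses the multiplicative inequality $\log(1+AB)\leq 2\log(1+A)\cdot\log B$ (valid for $A\geq e-1$, $B\geq e$) to factor out $(\log\max(F_{t_1,t_2},e))^{1/\al}$ in one step, whereas you split additively and then absorb the $(t-s)^\ta$ term via the lower bound $\zeta(\tau)\geq\tau^\ta/\ta$; both arguments are equally elementary and yield the same conclusion.
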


When $\al = 2$, Lemma \ref{LEM:GRR} reduces to Corollary A.5 in \cite{FV}.
While Lemma \ref{LEM:GRR} for general $\al > 0$ follows 
in an analogous manner, 
 we present a proof 
for readers' convenience.

\begin{proof}
Let $\Psi(t) = e^{c_0 t^\al} - 1$ and $\textrm{p}(t) = t^\ta$.
Then,  from 
the Garsia-Rodemich-Rumsey inequality
(\cite[Theorem A.1]{FV}) with \eqref{G1}, 
we obtain
\begin{align}
d(u(t_1), u(t_2)) \leq 8\ta c_0^{-\frac{1}{\al}} \int_0^{t_2 - t_1}
t^{\ta - 1} \bigg\{ \log\Big( 1 + \frac{4 F_{t_1, t_2}}{t^2}\Big) \bigg\}^\frac{1}{\al} dt .
\label{G3}
\end{align}

\noi
Note that we have
\begin{align}
\log(1+AB) \leq \log(1+A) + \log B \leq 2 \log(1+A) \cdot \log B
\label{G4}
\end{align}

\noi
for $A \geq e - 1$ and $B \geq e$.
Then, it follows from \eqref{G3} and \eqref{G4} with \eqref{G2a} 
that 
\begin{align}
d(u(t_1), u(t_2)) \leq C \ta c_0^{-\frac{1}{\al}} 
\zeta (t_2 - t_1)
\big(\log  (\max( F_{t_1, t_2}, e))\big)^\frac{1}{\al}, 
\label{G5}
\end{align}

\noi
provided that  $\frac{4}{(t_2 - t_1)^2} \geq e-1$,
which is certainly satisfied for $0 < t_2 - t_1 \leq 1$.
The desired estimate \eqref{G2} 
follows directly from \eqref{G5}.
\end{proof}

Let $\Psi$ and $\Phi$ 
be the stochastic convolutions
defined in \eqref{PsiN} and \eqref{PhiN}, 
respectively.
Then, using standard stochastic analysis with the Wiener chaos estimate (Lemma \ref{LEM:hyp}), 
we have the following regularity and convergence result.

\begin{lemma}\label{LEM:Psi}
Let $Z = \Psi$ or  $\Phi$.
Given  $k \in \N$
and $N \in \N$, let $:\! Z_N^k \!:
\, = \, :\! (\P_N Z)^k \!:$
denote the truncated Wick power defined
in \eqref{Herm1} or \eqref{Herm3}, respectively.
Then, 
given any  $T,\eps>0$ and finite $p \geq 1$, 
 $\{ \, :\! Z_N^k \!: \, \}_{N\in \N}$ is a Cauchy sequence in $L^p(\O;C([0,T];W^{-\eps,\infty}(\T^2)))$,
 converging to some limit $:\!Z^k\!:$ in $L^p(\O;C([0,T];W^{-\eps,\infty}(\T^2)))$.
Moreover,  $:\! Z_N^k \!:$  converges almost surely to the same  limit in $C([0,T];W^{-\eps,\infty}(\T^2))$.
Given any finite $q\geq 1$, we   have 
the following tail estimate:
\begin{align}
P\Big( \|:\! Z^k \!:\|_{L^q_T W^{-\eps, \infty}_x} > \ld\Big) 
\leq C\exp\bigg(-c \frac{\ld^{\frac{2}{k}}}{T^{1 + \frac{2}{qk}}}\bigg)
\label{P0}
\end{align}

\noi
for any $T \geq 1$ and $\ld > 0$.
When $q = \infty$, we  also have 
the following tail estimate:
\begin{align}
P\Big( \|:\! Z^k \!:\|_{L^\infty ([j, j+1]; W^{-\eps, \infty}_x)}> \ld\Big) 
\leq C\exp\bigg(-c \frac{\ld^{\frac{2}{k}}}{j+1}\bigg)
\label{P0z}
\end{align}

\noi
for any $j \in \Z_{\ge 0}$ and $\ld > 0$.

\end{lemma}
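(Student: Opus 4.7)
The plan is to prove the claim for $Z = \Psi$; the argument for $Z = \Phi$ is identical after substituting the damped propagator \eqref{lin2} for \eqref{lin1}. The strategy is a standard three-step Kolmogorov-type scheme: first, compute second moments of the spatial Fourier coefficients of the truncated Wick powers using Wick's theorem; second, promote to high moments in $W^{-\eps,\infty}_x$ via Sobolev embedding and hypercontractivity (Lemma \ref{LEM:hyp}); third, extract uniform-in-time continuity via the Garsia--Rodemich--Rumsey inequality (Lemma \ref{LEM:GRR}). The tail estimates will then follow from Chebyshev's inequality optimized over $p$.

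Writing $\Psi_N(x,t) = \sum_{|n|\leq N}\ft{\Psi}(n,t)\,e_n(x)$ with $\sigma_n(t) := \E|\ft{\Psi}(n,t)|^2 \leq t/(2\jb n^2)$, the $L^2(\O)$-orthogonality of Wiener chaoses combined with Wick's theorem identifies the $L^2(\O)$-norm squared of the frequency-$m$ Fourier coefficient of ${:}Z_N^k - Z_M^k{:}$ at time $t$ (for $N \geq M$) with
\begin{align*}
k!\sum_{\substack{n_1+\dots+n_k = m\\ |n_j|\leq N,\ \max_j|n_j| > M}} \prod_{j=1}^k \sigma_{n_j}(t).
\end{align*}
Iterated applications of the two-dimensional convolution bound $\sum_n \jb n^{-a}\jb{m-n}^{-b}\lesssim \jb m^{-\min(a,b,a+b-2)}$ (with logarithmic corrections at borderline exponents), together with a small sacrifice of the regularity $\eps$ to extract a polynomial decay from the constraint $\max_j|n_j|>M$, then produce
\begin{align*}
\E\bigl[\|{:}Z_N^k - Z_M^k{:}(t)\|_{H^{-\eps/2}}^2\bigr] \leq C_\eps\, t^k\, M^{-2\delta}
\end{align*}
for some $\delta = \delta(\eps) > 0$. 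Combining the Sobolev embedding $W^{-\eps/2,r}\hookrightarrow W^{-\eps,\infty}$ (valid for $r>4/\eps$), Minkowski's inequality for $p\geq r$, and Lemma \ref{LEM:hyp} applied pointwise in $x$---the relevant $L^2(\O)$-norm being constant in $x$ by spatial stationarity of $\Psi$---then yields
\begin{align*}
\|{:}Z_N^k - Z_M^k{:}(t)\|_{L^p(\O; W^{-\eps,\infty})} \leq C_\eps\, p^{k/2}\, t^{k/2}\, M^{-\delta}.
\end{align*}

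The same framework applied to the time increment, together with a telescoping expansion of $\prod \sigma_{n_j}(t) - 2\prod K_{n_j}(t,s) + \prod \sigma_{n_j}(s)$ (where $K_n(t,s) = \E[\ft{\Psi}(n,t)\cj{\ft{\Psi}(n,s)}]$) and the pointwise bound $|\sin(a\jb n) - \sin(b\jb n)| \leq |a-b|\jb n$, produces a H\"older-type estimate
\begin{align*}
\|{:}Z_N^k{:}(t) - {:}Z_N^k{:}(s)\|_{L^p(\O; W^{-\eps,\infty})} \leq C_\eps\, p^{k/2}\, T^{\kappa}\, |t-s|^{\beta}
\end{align*}
for $0\leq s,t\leq T$ and some $\beta,\kappa > 0$ depending on $k,\eps$. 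Inserting this increment bound into Lemma \ref{LEM:GRR} with $\al = 2/k$ applied on each unit subinterval of $[0,T]$ and optimizing over $p$, I would promote the estimates to
\begin{align*}
\bigl\|{:}Z_N^k - Z_M^k{:}\bigr\|_{L^p(\O; C([0,T]; W^{-\eps,\infty}))} \leq C_\eps\, p^{k/2}\, T^{k/2}\, M^{-\delta},
\end{align*}
which shows that $\{{:}Z_N^k{:}\}_{N\in\N}$ is Cauchy in $L^p(\O; C([0,T]; W^{-\eps,\infty}))$ for every finite $p \geq 1$. A Borel--Cantelli argument along the subsequence $N = 2^j$ then delivers the almost sure convergence to the same limit.

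For the tail bounds, Minkowski's inequality (with $p\geq q$) applied to the limiting pointwise-in-$t$ bound gives
\begin{align*}
\|{:}Z^k{:}\|_{L^p(\O; L^q_T W^{-\eps,\infty}_x)} \leq \bigl\|\|{:}Z^k{:}(t)\|_{L^p(\O;W^{-\eps,\infty})}\bigr\|_{L^q_T} \leq C\, p^{k/2}\, T^{k/2 + 1/q},
\end{align*}
and Chebyshev's inequality with the optimal choice $p \sim \ld^{2/k}/T^{1 + 2/(qk)}$ produces \eqref{P0} exactly. The interval bound \eqref{P0z} follows from the same argument applied on $[j,j+1]$, the factor $(j+1)^{k/2}$ arising from $\sigma_n(t)\leq (j+1)/(2\jb n^2)$ on that interval. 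The main technical obstacle will be the iterated two-dimensional convolution estimate of step~(i): one must arrange the summation so that the constraint $\max_j|n_j|>M$ delivers a uniform polynomial decay $M^{-2\delta}$ without spoiling the pure power-of-$t$ behavior of the variance. This is delicate due to the borderline logarithmic character of the 2-d massive free field, but is handled by sacrificing a small fraction of the exponent $\eps$.
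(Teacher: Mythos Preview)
Your proposal is correct and follows essentially the same route as the paper: second-moment bounds on Fourier coefficients via Wick's theorem, upgraded to high $L^p(\Omega;W^{-\eps,\infty})$ moments by Sobolev embedding plus hypercontractivity (Lemma~\ref{LEM:hyp}), then time-continuity via Garsia--Rodemich--Rumsey (Lemma~\ref{LEM:GRR}), and finally Chebyshev with the optimal $p$ for the tail estimates. The only presentational difference is that for \eqref{P0z} the paper makes explicit the decomposition $\sup_{[j,j+1]} \leq |\text{value at } j| + \sup_{[j,j+1]}|\text{increment}|$ before invoking GRR on the increment, and that your $T^\kappa$ in the increment bound must in fact be $T^{k/2}$ (as you correctly identify at the end) to recover the precise exponent $(j+1)$ in \eqref{P0z}.
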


\begin{proof}
In the following, we briefly discuss the case of  the stochastic convolution $\Psi$
associated with  the linear wave operator.
A straightforward modification yields the corresponding result for $\Phi$.
As for the convergence part of the statement, 
see \cite[Proposition~2.1]{GKO1} 
and  \cite[Lemma~3.1]{GKO2} for the details.
As for the exponential tail estimate \eqref{P0}, 
by repeating the argument
in the proof of \cite[Proposition~2.1]{GKO1}, 
we have
\begin{align}
\E\big[| \jb{\nb}^{-\eps} :\! \Psi^{k} (x, t) \!:  |^2 \big] 
& \les \sum_{n_1, \ldots, n_{k} \in \mathbb{Z}^2} \frac{t^k}{\langle n_1
   \rangle^2 \cdots \langle n_{k} \rangle^2 \langle n_1 + \cdots + n_{k}
   \rangle^{2\eps}} \le C_\eps t^k
\label{P0z1}
\end{align}

\noi
for any $\eps > 0$, uniformly in 
$x \in \T^2$ and $t \geq 0$.
Then, Minkowski's integral inequality and the Wiener chaos estimate (Lemma \ref{LEM:hyp}), 
we obtain
\begin{align}
 \Big\| \| :\! \Psi^{k}  \!:\|_{L^q_T W^{-\eps, \infty}_x}\Big\|_{L^p(\O)}
\les p ^\frac{k}{2} T^{\frac{k}{2} + \frac{1}{q}}
\label{P0a}
\end{align}

\noi
for any sufficiently large $p \gg1 $ (depending $q \geq 1$).
The exponential tail estimate \eqref{P0}
follows from \eqref{P0a} and Chebyshev's inequality
(see also Lemma 4.5 in \cite{TzBO}).

Fix $j \in \Z_{\ge 0}$ and $\ld > 0$. Then, we have
\begin{align}
\begin{split}
P\Big( \|:\! \Psi^k \!:\|_{L^\infty ([j, j+1]; W^{-\eps, \infty}_x)}
& > \ld\Big) 
\leq
 P\Big( \|:\! \Psi^k(j) \!:\|_{ W^{-\eps, \infty}_x}> \tfrac{\ld}{2}\Big) \\
+ 
& P\Big( \sup_{t \in [j, j+1]}\|:\! \Psi^k(t) \!: - :\! \Psi^k(j) \!:\|_{ W^{-\eps, \infty}_x}> \tfrac \ld2\Big). 
\end{split}
\label{P0b}
\end{align}

\noi
In view of \eqref{P0z1}, we see 
that 
the first term on the right-hand side of \eqref{P0b}
is controlled by the right-hand side of \eqref{P0z}.
As for the second term on the right-hand side
of \eqref{P0b}, 
we first recall from  the proof of \cite[Proposition~2.1]{GKO1}
that 
\begin{align*}
\Big\| |h|^{-\rho} \|\dl_h (:\! \Psi^{k} (t) \!:) \|_{W^{-\eps, \infty}_x} \Big\|_{L^p(\O)}
\les p^\frac{k}{2} (j+1)^\frac{k}{2}
\end{align*}

\noi
for any sufficiently large $p \gg1 $, $t \in [j, j+1]$, 
and $|h| \leq 1$, where $\dl_h f(t) = f(t+h) - f(t)$ and $0 < \rho < \eps$.
Then, by applying Lemma 4.5 in \cite{TzBO}, 
we obtain the following exponential bound:
\begin{align}
\E \Bigg[\exp \bigg\{(j+1)^{-1} \bigg( \frac{
\|:\! \Psi^{k} (\tau_2) - :\! \Psi^{k} (\tau_1) \!: \|_{W^{-\eps, \infty}_x}}{|\tau_2-\tau_1|^\rho}\bigg)^\frac{2}{k}\bigg\}\Bigg] 
\leq C < \infty, 
\label{P0d}
\end{align}

\noi
uniformly in  $j \leq \tau_1 < \tau_2 \leq  j+1$ (and $j \in \Z_{\ge 0}$).
By integrating~\eqref{P0d} in $\tau_1$ and $\tau_2$, 
this verifies
the hypothesis~\eqref{G1} of Lemma \ref{LEM:GRR}
(under an expectation).
Finally, applying 
Lemma~\ref{LEM:GRR} 
and then  Chebyshev's inequality, we conclude that 
\begin{align*}
P\Big( \sup_{t \in [j, j+1]}\|:\! \Psi^k(t) \!: - :\! \Psi^k(j) \!:\|_{ W^{-\eps, \infty}_x}> \tfrac \ld2\Big)
\leq C\exp\bigg(-c \frac{\ld^{\frac{2}{k}}}{j+1}\bigg).
\end{align*}

\noi
This proves \eqref{P0z}.
\end{proof}

In order to prove  Theorem \ref{THM:GWP1}, 
Lemma \ref{LEM:Psi} is not sufficient.
The following lemma shows
a  finer regularity property of $ \Psi$, 
namely, it is only logarithmically divergent from 
being a function. 
We recall that the $I$-operator depends
on the underlying $0 < s < 1$ and $N \in \N$.

\begin{lemma} \label{LEM:log}
Let $\Psi$ be as in \eqref{PsiN}
and fix $ 0 < s < 1$.
Then, given any $x \in \T^2$ and $t \in \R_+$, 
$I\Psi(x, t)$ is a mean-zero Gaussian random variable
with variance bounded by $C_0 t \log N$, 
where the constant $C_0$ 
is independent of  $x \in \T^2$ and $t \in \R_+$.

\end{lemma}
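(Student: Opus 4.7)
The plan is to unwind the definitions to write $I\Psi(x,t)$ as an explicit stochastic integral and then use the Itô isometry together with the decay of the $I$-multiplier $m_N$.

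First I would start from the stochastic convolution formula \eqref{PsiN} and the cylindrical Wiener process \eqref{W1} to write
\[
I\Psi(x,t) = \sum_{n\in\Z^2} m_N(n)\,\frac{1}{\jb{n}} \int_0^t \sin\bigl((t-t')\jb{n}\bigr)\, e_n(x)\, dB_n(t').
\]
Since $I\Psi(x,t)$ is a (real-valued) linear combination of the Gaussian Itô integrals against $\{B_n\}$, it is a mean-zero Gaussian random variable. This step is immediate once one recalls the conjugation convention $B_{-n}=\overline{B_n}$, which ensures reality (and is already used in deriving \eqref{sig}).

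Next I would compute the variance by the same calculation as in \eqref{sig}, only carrying along the extra factor $m_N(n)^2$. Using the Itô isometry together with the trivial bound $\sin^2 \leq 1$, we get
\[
\E\bigl[(I\Psi(x,t))^2\bigr]
= \sum_{n\in\Z^2} m_N(n)^2 \int_0^t \left[\frac{\sin((t-t')\jb{n})}{\jb{n}}\right]^2 dt'
\leq t \sum_{n\in\Z^2} \frac{m_N(n)^2}{\jb{n}^2},
\]
which is independent of $x$.

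Finally I would bound the resulting sum by splitting into the three regions governed by the definition \eqref{I0a} of $m_N$. For $|n|\leq N$, where $m_N(n)=1$, a standard dyadic count gives $\sum_{|n|\leq N}\jb{n}^{-2}\lesssim \log N$. The transition region $N\leq |n|\leq 2N$ contributes $O(1)$ since $m_N\leq 1$ and $\sum_{N\leq |n|\leq 2N}\jb{n}^{-2}\lesssim 1$. For $|n|\geq 2N$, substituting $m_N(n)^2 = (N/|n|)^{2(1-s)}$ yields
\[
N^{2(1-s)} \sum_{|n|\geq 2N} \frac{1}{|n|^{2(1-s)}\jb{n}^2}
\lesssim N^{2(1-s)} \cdot N^{-2(1-s)} = O(1),
\]
where the sum is convergent because the integrand decays like $|n|^{-4+2s}$ with $s<1$. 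Summing these three contributions gives $\sum_n m_N(n)^2/\jb{n}^2 \lesssim \log N$, so the variance is bounded by $C_0 t\log N$ as claimed. No step here is a genuine obstacle; the only conceptual point worth highlighting is that the gain from the $I$-multiplier on high frequencies is exactly strong enough to render the tail sum $O(1)$ uniformly in $N$, so that the logarithm arises purely from the low-frequency part where $I$ acts as the identity.
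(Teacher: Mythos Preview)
Your proof is correct and follows essentially the same route as the paper: both compute the variance via the It\^o isometry, split the resulting frequency sum into low and high parts, observe that the low-frequency contribution (where $m_N\equiv 1$) gives the $t\log N$ term as in \eqref{sig}, and use the decay $m_N(n)^2\lesssim N^{2-2s}/|n|^{2-2s}$ on high frequencies to show the tail contributes only $O(t)$. Your three-region split (separating out the transition annulus $N\le|n|\le 2N$) is slightly more explicit than the paper's two-region split, but the argument is the same.
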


\begin{proof} 
Given any $x \in \T^2$ and $t \in \R_+$, 
$I\Psi(x, t)$ is obviously a mean-zero Gaussian random variable
(if the variance is finite).
By writing  $\Psi = \P_N \Psi + \P_N^\perp \Psi$, 
we separately estimate  the contributions from 
$\P_N \Psi$ and $ \P_N^\perp \Psi$.
For the low frequency part, we have $I \P_N \Psi = \P_N \Psi$
and thus from  \eqref{sig}, we have
\begin{align*}
\E \big[ (I \P_N  \Psi(x, t))^2\big] 
=\E \big[ (\P_N \Psi (x, t))^2 \big]
 \sim t \log N
\end{align*}

\noi
uniformly in $x \in \T^2$.
For the high frequency part, 
it follows from 
 \eqref{PsiN}, and \eqref{I0a} that 
  \begin{align*}
\E \big[(I \P_N^\perp \Psi (x, t))^2 \big]
& = 
\int_0^t \sum_{|n|> N} 
\E\big[ |\ft \Psi (n, t')|^2 \big] m_N^2(n)dt' \\
& \les t 
\sum_{|n|> N} \frac{N^{2-2s}}{|n|^{4-2s}}  \\
& \sim t, 
\end{align*}

\noi
uniformly in $x \in \T^2$.
This proves Lemma \ref{LEM:log}.
\end{proof}

\subsection{Product estimates}
We recall the following product estimates.
See \cite{GKO1} for the proof.

\begin{lemma}\label{LEM:bilin}
 Let $0\le s \le 1$.

\smallskip

\noi
\textup{(i)} Suppose that 
 $1<p_j,q_j,r < \infty$, $\frac1{p_j} + \frac1{q_j}= \frac1r$, $j = 1, 2$. 
 Then, we have  
\begin{equation*}  
\| \jb{\nb}^s (fg) \|_{L^r(\T^d)} 
\les \Big( \| f \|_{L^{p_1}(\T^d)} 
\| \jb{\nb}^s g \|_{L^{q_1}(\T^d)} + \| \jb{\nb}^s f \|_{L^{p_2}(\T^d)} 
\|  g \|_{L^{q_2}(\T^d)}\Big).
\end{equation*}

\smallskip

\noi
\textup{(ii)} 
Suppose that  
 $1<p,q,r < \infty$ satisfy the scaling condition:
$\frac1p+\frac1q\leq \frac1r + \frac{s}d $.
Then, we have
\begin{align*}
\big\| \jb{\nb}^{-s} (fg) \big\|_{L^r(\T^d)} \les \big\| \jb{\nb}^{-s} f \big\|_{L^p(\T^d) } 
\big\| \jb{\nb}^s g \big\|_{L^q(\T^d)}.  
\end{align*}

\end{lemma}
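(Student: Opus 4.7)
Both inequalities are standard Littlewood--Paley / paraproduct estimates on the torus, and the natural strategy is to decompose the product via Bony's paraproduct. Fix a smooth homogeneous dyadic partition of unity and write $P_j$ for the Littlewood--Paley projection at frequency $\sim 2^j$. Split
\begin{equation*}
fg = \pi_<(f,g) + \pi_>(f,g) + \pi_=(f,g),
\end{equation*}
where $\pi_<(f,g) = \sum_{j \leq k-3} P_j f \cdot P_k g$, $\pi_>(f,g) = \sum_{k \leq j-3} P_j f \cdot P_k g$, and $\pi_=(f,g) = \sum_{|j-k|\leq 2} P_j f \cdot P_k g$. The workhorses are the Littlewood--Paley square function characterisation $\|\jb{\nb}^\sigma u\|_{L^p} \sim \|(\sum_j 2^{2\sigma j} |P_j u|^2)^{1/2}\|_{L^p}$ (valid for $\sigma \in \R$ and $1 < p < \infty$), the Fefferman--Stein vector-valued maximal inequality, and Bernstein's inequality.

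For part (i), in $\pi_<(f,g)$ the spectral support of each $P_j f \cdot P_k g$ with $j \leq k-3$ is contained in an annulus of size $\sim 2^k$, so $\jb{\nb}^s$ behaves like multiplication by $2^{sk}$. Bounding $|P_j f| \lesssim M(f)$ pointwise and summing in $j$ gives
\begin{equation*}
\|\jb{\nb}^s \pi_<(f,g)\|_{L^r} \lesssim \Big\| M(f) \cdot \Big(\sum_k 2^{2sk}|P_k g|^2\Big)^{1/2}\Big\|_{L^r} \lesssim \|f\|_{L^{p_1}} \|\jb{\nb}^s g\|_{L^{q_1}}
\end{equation*}
by H\"older with exponents $(p_1,q_1)$ and Fefferman--Stein. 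The piece $\pi_>(f,g)$ is treated identically with $(p_2,q_2)$ in place of $(p_1,q_1)$. For the resonant piece $\pi_=(f,g)$ no spectral gain is available, but since $0 \leq s \leq 1$, Bernstein on the output (which lives at frequency $\lesssim 2^k$) lets me pay $2^{sk}$ and absorb it into either factor, producing both terms on the right-hand side and closing the $\ell^2$-sum in $k$.

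For part (ii), in $\pi_<(f,g)$ the spectral support again sits at frequency $\sim 2^k$, so $\jb{\nb}^{-s}$ gives a gain $2^{-sk}$. Rewriting $P_j f = \jb{\nb}^s(\jb{\nb}^{-s} P_j f)$ rebalances the scales, and H\"older plus Fefferman--Stein then yield the desired bound using only the trivial scaling $\tfrac{1}{p} + \tfrac{1}{q} = \tfrac{1}{r}$; the piece $\pi_>(f,g)$ is symmetric. The resonant sum $\pi_=(f,g)$ is the substantive one: the product $P_j f \cdot P_k g$ with $j \sim k$ can have Fourier support at any frequency $\lesssim 2^k$, so $\jb{\nb}^{-s}$ provides no spectral gain and the missing decay must come from Bernstein / Sobolev embedding. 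I set $\tfrac{1}{\tilde r} = \tfrac{1}{r} + \tfrac{s}{d}$ and use $\|P_\ell u\|_{L^r} \lesssim 2^{s\ell}\|P_\ell u\|_{L^{\tilde r}}$ on the output, the loss $2^{s\ell}$ being cancelled by $\jb{\nb}^{-s}$. A further H\"older in $L^{\tilde r}$ with auxiliary exponents $(\tilde p, \tilde q)$, followed by Sobolev embeddings that relate $\tilde p$ to $p$ and $\tilde q$ to $q$ through the shifts $\jb{\nb}^{-s}$ and $\jb{\nb}^s$, closes the estimate.

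The only genuinely delicate point is the resonant piece of part (ii): one must verify that under the scaling hypothesis $\tfrac{1}{p} + \tfrac{1}{q} \leq \tfrac{1}{r} + \tfrac{s}{d}$ the auxiliary exponents $(\tilde p, \tilde q)$ can be chosen inside $(1,\infty)$ so that the Sobolev embeddings and the square-function characterisation of $\jb{\nb}^{\pm s}$ on $L^p, L^q$ all apply simultaneously; this is precisely where the scaling assumption is forced. Once the exponent bookkeeping is in place, everything else reduces to routine applications of Bernstein, H\"older, and Fefferman--Stein.
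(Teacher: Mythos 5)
Two preliminary observations. The paper does not prove this lemma at all: it quotes it from \cite{GKO1} (where (i) is the classical fractional Leibniz rule and (ii) is deduced from (i) by duality together with Sobolev embedding), and the only in-text argument is the remark that the case of strict inequality $\frac1p+\frac1q<\frac1r+\frac sd$ reduces to the equality case via $L^{r_1}(\T^d)\subset L^{r_2}(\T^d)$ for $r_1\ge r_2$. So your direct paraproduct attack is a different route; for part (i) it is sound, since the low-high/high-low pieces close by square functions, Fefferman--Stein and the multiplier bound $\|\jb{\nb}^s P_{\le k}h\|_{L^r}\lesssim 2^{sk}\|h\|_{L^r}$, and in the resonant piece the factor $2^{-s(k-\ell)}$ between the output block $\ell$ and the input block $k$ gives the summation for $s>0$ (the case $s=0$ being plain H\"older).

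Part (ii), however, contains a genuine gap. First, your treatment of $\pi_<$ and $\pi_>$ cannot work as stated: you claim they close ``using only the trivial scaling $\frac1p+\frac1q=\frac1r$'', but the hypothesis is $\frac1p+\frac1q=\frac1r+\frac sd$, so H\"older with the given exponents lands in $L^{\tilde r}$ with $\frac1{\tilde r}=\frac1p+\frac1q>\frac1r$, i.e.\ $\tilde r<r$, and on the finite-measure space $\T^d$ this does \emph{not} control the $L^r$ norm (the inclusion goes the other way). After rebalancing $P_jf=\jb{\nb}^s(\jb{\nb}^{-s}P_jf)$ and $P_kg=\jb{\nb}^{-s}(\jb{\nb}^{s}P_kg)$, the net gain in the non-resonant pieces is $2^{-s(k-j)}\cdot 2^{-sk}$, and the extra $2^{-sk}$ must be spent on Bernstein for the frequency-localized output, $\|P_{\sim k}h\|_{L^r}\lesssim 2^{kd(\frac1{\tilde r}-\frac1r)}\|P_{\sim k}h\|_{L^{\tilde r}}=2^{sk}\|P_{\sim k}h\|_{L^{\tilde r}}$; the room is there, but the scaling condition is used, not avoided. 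Second, the resonant piece, which you rightly call the substantive one, is deferred to ``exponent bookkeeping'', and that is exactly where the proof lives: a blockwise bound $\|\jb{\nb}^{-s}(P_kf\,\tilde P_kg)\|_{L^r}\lesssim\|P_kf\|_{L^p}\|P_kg\|_{L^q}$ carries no decay in $k$ (the factors $2^{sk}$ and $2^{-sk}$ cancel), so summing by the triangle inequality diverges. One needs, for instance, the pointwise Cauchy--Schwarz bound $|\pi_=(f,g)|\le\big(\sum_k 2^{-2sk}|P_kf|^2\big)^{1/2}\big(\sum_k 2^{2sk}|\tilde P_kg|^2\big)^{1/2}$, H\"older with $(p,q)$, the weighted square-function characterization of $\jb{\nb}^{\mp s}$ in $L^p$, $L^q$, and finally the Sobolev embedding $L^{\tilde r}(\T^d)\hookrightarrow W^{-s,r}(\T^d)$ with $\frac1{\tilde r}=\frac1r+\frac sd$, which requires $\tilde r>1$, an endpoint restriction your sketch never confronts. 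Alternatively, and closer to the cited source, (ii) follows in a few lines from (i) by duality: pair $\jb{\nb}^{-s}(fg)$ with $h\in L^{r'}$, move $\jb{\nb}^{-s}$ onto $f$, apply (i) to $\jb{\nb}^{s}(g\,\jb{\nb}^{-s}h)$, and use Sobolev embedding on $\jb{\nb}^{-s}h$ and on $g$; the scaling condition is precisely what makes the exponents match. As written, your proposal establishes (i) but does not yet prove (ii).
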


Note that
while  Lemma \ref{LEM:bilin} (ii) 
was shown only for 
$\frac1p+\frac1q= \frac1r + \frac{s}d $
in \cite{GKO1}, 
the general case
$\frac1p+\frac1q\leq \frac1r + \frac{s}d $
follows from the inclusion $L^{r_1}(\T^d)\subset L^{r_2}(\T^d)$
for $r_1 \geq r_2$.

\section{$I$-method 
for  the renormalized cubic SNLW}
\label{SEC:GWP1}

In this section, 
we prove  global well-posedness
of  the renormalized cubic SNLW \eqref{SNLW6} on~$\T^2$
(Theorem \ref{THM:GWP1}).
In Subsection~\ref{SUBSEC:3.1}, 
we go over preliminary estimates.
Then, we present a proof of  Theorem \ref{THM:GWP1} in Subsection \ref{SUBSEC:3.2}.

\subsection{Commutator and other preliminary estimates}
\label{SUBSEC:3.1}

In the following, we fix $N \in \N$ and $0 < s < 1$ and set\footnote{Recall that the $I$-operator
also depends on $0 < s < 1$.} $I = I_N$.
Moreover, we use the following notations:
\begin{align}
 f_{\les N} = \P_{\frac{N}{3}} f
\qquad \text{and}\qquad  f_{\ges N} = \P_{\frac{N}{3}}^\perp f = f - f_{\les N}.
\label{Q0}
\end{align}


We first go over basic commutator estimates in Lemmas \ref{LEM:C1}, \ref{LEM:C2}, 
and \ref{LEM:C3}.

\begin{lemma}\label{LEM:C1}
Let $\frac 23 \le s < 1$. Then, we have  
\begin{align}
\|(If)^k - I(f^k)\|_{L^2} \les N^{-1+k(1-s)} \|If\|^k_{H^1}
\label{Q1}
\end{align}

\noi
for $k = 1, 2, 3$.

\end{lemma}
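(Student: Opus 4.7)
The case $k=1$ is trivial since $(If)^1 - I(f^1) = 0$, so I focus on $k \in \{2, 3\}$. The plan is to decompose $f = f_{\les N} + f_{\ges N}$ as in~\eqref{Q0}, and then exploit two cancellations provided by the $I$-operator: first, $I f_{\les N} = f_{\les N}$ since $m_N \equiv 1$ on $\{|\xi| \leq N\}$; second, $I(f_{\les N}^k) = f_{\les N}^k$ because the product $f_{\les N}^k$ has Fourier support in $\{|\xi| \leq kN/3\} \subseteq \{|\xi| \leq N\}$ for $k \leq 3$. Expanding via the multinomial formula then cancels the pure low-frequency contribution, reducing the problem to estimating
\begin{align*}
(If)^k - I(f^k) = \sum_{j=1}^{k} \binom{k}{j} \Big\{ f_{\les N}^{k-j} (If_{\ges N})^{j} - I\big( f_{\les N}^{k-j} f_{\ges N}^{j}\big) \Big\}
\end{align*}
in $L^2(\T^2)$.

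For each mixed term I would apply the triangle inequality together with H\"older's inequality in the splitting $L^\infty \cdot L^{2j}$, using the $L^2$-contractivity of $I$ (valid since $0 \leq m_N \leq 1$). The two ingredients needed are: (i) $\|f_{\les N}\|_{L^\infty} \les N^{\eps} \|If\|_{H^1}$ for any $\eps > 0$, which follows from the Sobolev embedding $H^{1+\eps}(\T^2) \hookrightarrow L^\infty(\T^2)$, Bernstein's inequality on the Fourier support $\{|\xi|\leq N/3\}$ of $f_{\les N}$, and $\|f_{\les N}\|_{H^1} \leq \|If\|_{H^1}$; and (ii) $\|g\|_{L^{2j}} \les N^{-1/j} \|If\|_{H^1}$ for $g \in \{If_{\ges N}, f_{\ges N}\}$ and $j \in \{1, 2, 3\}$. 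The bound (ii) would follow from the Sobolev embedding $H^{1-1/j}(\T^2) \hookrightarrow L^{2j}(\T^2)$ combined with a direct multiplier analysis on the Fourier support $\{|\xi| \geq N/3\}$: the crucial point is that the weight ratio $\jb{\xi}^{-1/j} m_N(\xi)^{-1}$ is dominated by $CN^{-1/j}$ uniformly on $\{|\xi|\geq N/3\}$ precisely when $s \geq 1 - 1/j$, which at the critical case $j = 3$ is exactly the hypothesis $s \geq \tfrac{2}{3}$ of the lemma.

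Combining (i) and (ii), each mixed term would be bounded by $N^{-1 + (k-j)\eps} \|If\|_{H^1}^k$, and summing over $j$ gives $\|(If)^k - I(f^k)\|_{L^2} \les N^{-1 + (k-1)\eps}\|If\|_{H^1}^k$ (with constant depending on $\eps$). Choosing $\eps > 0$ small enough that $(k-1)\eps < k(1-s)$ (which is possible for any fixed $s < 1$) then yields the claimed bound $N^{-1 + k(1-s)}\|If\|_{H^1}^k$. The main technical point will be the uniform multiplier estimate in (ii), where the hypothesis $s \geq \tfrac 23$ is sharp at $j = 3$; everything else is a routine H\"older--Sobolev--Bernstein manipulation, and the $\eps$-loss arising from the endpoint failure of $H^1 \hookrightarrow L^\infty$ in two dimensions is absorbed harmlessly using that $s < 1$.
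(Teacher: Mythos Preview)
Your proposal is correct and follows essentially the same approach as the paper: the same low/high-frequency decomposition $f = f_{\les N} + f_{\ges N}$, the same cancellation $I(f_{\les N}^k) = f_{\les N}^k$ for $k \leq 3$, and then H\"older/Sobolev estimates on the surviving mixed terms. The only cosmetic difference is in the H\"older split: the paper places the low-frequency factors in $L^q$ for large finite $q$ (with a small $\dl$-loss on the high-frequency side), whereas you place them in $L^\infty$ via Bernstein and put the high-frequency factors in $L^{2j}$; your multiplier analysis for $\|f_{\ges N}\|_{L^{2j}} \les N^{-1/j}\|If\|_{H^1}$ is exactly where $s \geq 1 - \tfrac{1}{3} = \tfrac{2}{3}$ appears, matching the paper's use of the hypothesis.
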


\begin{proof}
By the definition of the $I$-operator and \eqref{Q0},  we have 
 $I(f_{\les N}^k) = f_{\les N}^k$
for $ k = 1, 2, 3$.
Thus, we have
\begin{align}
\begin{split}
(If)^k - I(f^k) 
& = \big(I(f_{\les N} + f_{\ges N})\big)^k - I\big((f_{\les N} + f_{\ges N})^k\big) \\
& = \big(f_{\les N} + I(f_{\ges N})\big)^k - I\big((f_{\les N} + f_{\ges N})^k\big)\\
& = \underbrace{f_{\les N}^k - I\big(f_{\les N}^k\big)}_{=0} + 
\sum_{j=0}^{k-1} {k \choose j}
\Big(f_{\les N}^j(If_{\ges N})^{k-j} - I\big(f_{\les N}^j f_{\ges N}^{k-j}\big)
\Big).
\end{split}
\label{Q2}
\end{align}

In the following, we use H\"older's inequality
with 
$\frac 12 = \frac jq + \frac{1}{2+\dl}$
for (i) 
 some large but finite $q\gg1 $ and small $\dl > 0$ when $j \geq 1$
 and (ii) $q = \infty$ and $\dl = 0$ when $j = 0$.
 Then, by 
H\"older's and  Sobolev's inequalities, we have
\begin{align}
\begin{split}
\|f_{\les N}^j (If_{\ges N})^{k-j}\|_{L^2}
& \le 
\|f_{\les N}\|_{L^{q}}^{j}
\|If_{\ges N}\|_{L^{(2+\dl) (k - j)}}^{k-j} \\
& \les 
 \|f_{\les N} \|_{H^{1}}^{j} 
\|If_{\ges N}\|_{H^{1 - \frac{2}{(2+\dl) (k - j)}}}^{k-j}\\
 & \les N^{-1+\eps} \|If\|_{H^1}^k
\end{split}
\label{Q3}
\end{align}

\noi
for some small $\eps > 0$.
Proceeding similarly
with  the boundedness of the multiplier $m_N$
and~\eqref{I1}, we have  
\begin{align}
\begin{split}
\big\|I\big(f_{\les N}^j f_{\ges N}^{k-j}\big)\big\|_{L^2} 
& \les \|f_{\les N}^j f_{\ges N}^{k-j}\|_{L^2} \\
& \le \|f_{\les N}\|_{L^q}^j \|f_{\ges N}\|_{L^{(2+\dl)(k-j)}} \\
& \les 
\|I f_{\les N}\|_{H^1}^j
\| f_{\ges N} \|_{H^{1 - \frac{2}{(2+\dl) (k - j)}}}^{k-j}\\
& \les 
N^{-1 + k(1-s)} 
\|I f_{\les N}\|_{H^1}^j
\| f_{\ges N} \|_{H^s}^{k-j}\\
& \les N^{-1 + k(1-s)} \|If \|_{H^1}^k
\end{split}
\label{Q4}
\end{align}

\noi
since $ \frac 23 \le s < 1$.
Therefore, the desired estimate \eqref{Q1}
follows from \eqref{Q2}, \eqref{Q3}, and~\eqref{Q4}.
\end{proof}

\begin{lemma} \label{LEM:C2}
 Let $0< \s < 1$. 
 Given $\dl > 0$, there exist small $\s_0 = \s_0(\dl) > 0$
 and large $p = p(\dl) \gg1$ such that  
\begin{align}
\norm{(If)(Ig) - I(fg)}_{L^2} 
\les N^{ - \frac{1-\s}{2} + \dl } \|f\|_{H^{1-\s}}\|g\|_{W^{-\s_0, p}}
\label{Q5a}
\end{align}

\noi
for any sufficiently large $N \gg1$.
\end{lemma}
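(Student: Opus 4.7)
The plan is to expand the commutator $\mathcal{C}(f,g) := (If)(Ig) - I(fg)$ on the Fourier side and perform a dyadic Littlewood-Paley decomposition in both $f$ and $g$, organizing the sum by the dyadic sizes $M_1, M_2$ of the input frequencies. Writing
\[
\mathcal{F}[\mathcal{C}(f,g)](n) = \sum_{n_1 + n_2 = n} K_N(n_1, n_2) \ft f(n_1) \ft g(n_2), \qquad K_N(n_1, n_2) := m_N(n_1) m_N(n_2) - m_N(n_1+n_2),
\]
one first observes that when $M_1, M_2 \les N$ (precisely, when $M_1, M_2 \le N/3$ so that $|n_1+n_2| \le N$ as well), the three values of $m_N$ entering $K_N$ are all equal to $1$, hence $K_N \equiv 0$ and this regime contributes nothing. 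When at least one $M_i \gg N$, I would exploit two structural features of $m_N$: the pointwise decay $m_N(n) \sim (N/|n|)^{1-s}$ for $|n| \gg N$, which handles purely high-high configurations; and the gradient estimate $|\nb m_N(\xi)| \les m_N(\xi)/|\xi|$ for $|\xi| \gg N$, which via the mean value theorem yields, in high-low configurations,
\[
|K_N(n_1, n_2)| \les \frac{\min(M_1, M_2)}{\max(M_1, M_2)} \cdot \max\big(m_N(n_1), m_N(n_1+n_2)\big).
\]

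From these symbol-level bounds I would pass to $L^2$-estimates on each dyadic block $\mathcal{C}(P_{M_1} f, P_{M_2} g)$ by combining Plancherel, Young's convolution inequality on the Fourier side, and Bernstein's inequality, together with
\[
\| P_{M_1} f \|_{L^2} \les M_1^{-(1-\s)} \|f\|_{H^{1-\s}}, \qquad \| P_{M_2} g \|_{L^p} \les M_2^{\s_0} \|g\|_{W^{-\s_0, p}}.
\]
Summing the resulting dyadic bounds over $(M_1, M_2)$ --- convergence being guaranteed by the high-frequency decay just described --- one obtains an $L^2$-bound of the form $N^{-(1-\s)/2 + \dl}\|f\|_{H^{1-\s}}\|g\|_{W^{-\s_0,p}}$, where the small losses proportional to $\s_0$ and $2/p$ are absorbed into $\dl$ by choosing $\s_0$ small and $p$ large in a manner depending on $\dl$.

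The hard part will be the high-low regime $M_2 \gg N \ges M_1$ in which the rough factor $g$ occupies the high frequency. Here the best symbol bound $|K_N| \les (M_1/M_2)(N/M_2)^{1-s}$ is not of product type, so one cannot directly invoke a scalar H\"older estimate; rather, one must combine the bilinear kernel bound with a careful balance between the commutator gain $M_1/M_2$, the decay $(N/M_2)^{1-s}$ coming from $Ig$, and the Bernstein cost of converting from $L^p$-based control on $g$ to $L^\infty$-based control in the product estimate. The explicit factor $(1-\s)/2$ in the final $N$-exponent --- rather than the stronger $(1-\s)$ available for the purely high-high interaction --- is dictated precisely by this three-way balance, and ensuring that the dyadic sum reassembles to give the claimed exponent is the technical heart of the argument.
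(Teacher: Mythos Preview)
Your dyadic approach is sound, but it differs from the paper's, and your diagnosis of the bottleneck is off. The paper does not dyadically decompose both inputs; instead it splits only $f$ at the threshold $N^{1/2}$ and $g$ at $N$, producing four terms $B_1,\dots,B_4$. The low--low piece $B_1$ vanishes; the piece $B_2$ with $|n_1|\les N^{1/2}$ and $|n_2|\ges N$ is handled via the mean-value bound on $m_N$, exactly as you describe. The point is the remaining two pieces $B_3=(If_{\ges N^{1/2}})(Ig)$ and $B_4=I(f_{\ges N^{1/2}}\,g)$: these are estimated \emph{without any commutator cancellation}, just by H\"older, Sobolev, boundedness of $I$, and the product estimate of Lemma~\ref{LEM:bilin}\,(ii). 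The only gain there comes from $\|f_{\ges N^{1/2}}\|_{L^2}\les N^{-(1-\s)/2}\|f\|_{H^{1-\s}}$, and this is the sole origin of the exponent $\tfrac{1-\s}{2}$.

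In particular, the regime $M_2\gg N\ges M_1$ that you single out as the hard part is not the bottleneck. Your worry that the symbol bound $(M_1/M_2)(N/M_2)^{1-s}$ is ``not of product type'' is misplaced: on a fixed dyadic block it is a constant and factors straight out of the bilinear estimate. Carrying out your own scheme there with $\|P_{M_1}f\|_{L^2}\les M_1^{-(1-\s)}\|f\|_{H^{1-\s}}$ and $\|P_{M_2}g\|_{L^\infty}\les M_2^{2/p+\s_0}\|g\|_{W^{-\s_0,p}}$ and summing the geometric series gives $N^{-(1-\s)+\dl}$, and the same holds in every other interaction regime. So your method actually proves the \emph{stronger} bound $N^{-(1-\s)+\dl}$; the factor $\tfrac{1-\s}{2}$ in the lemma is an artefact of the paper's quicker but coarser $N^{1/2}$ splitting, not an intrinsic limitation of the commutator. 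The trade-off is that the paper's four-term argument is dispatched in a few lines per term, while your full dyadic case analysis is longer but sharper.
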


\begin{proof}
By writing $f = f_{\les N^{\frac 12}} + f_{\ges N^{\frac12}}$ 
 and $g= g_{\les N} + g_{\ges N}$, we have 
 \begin{align}
 \begin{split}
 (If)(Ig) - I(fg) 
 &= \Big\{ (If_{\les N^{\frac12}}) (Ig_{\les N}) 
 - I(f_{\les N^{\frac12}}g_{\les N})\Big\}\\
& \hphantom{X}
+ \Big\{ (If_{\les N^{\frac12}})(Ig_{\ges N}) - 
I(f_{\les N^{\frac12}}g_{\ges N})\Big\}\\
& \hphantom{X}
+ (If_{\ges N^{\frac12}})(Ig)\\
& \hphantom{X}
- I(f_{\ges N^{\frac12}}g)\\
& =: B_1 + B_2 + B_3 + B_4.
\end{split}
\label{Q5}
 \end{align}

From the definition of the $I$-operator with \eqref{Q0}, 
we see that 
\begin{align}
B_1 = 0
\label{Q6}
\end{align}

\noi
for any sufficiently large $N \gg1 $
since 
$\supp\big\{ \F(f_{\les N^{\frac12}}g_{\les N})\big\}
\subset \{ n \in \Z^2 : |n| \leq  \frac 56N\}$
for $N \gg 1$.

For $|n_1| \les N^\frac{1}{2}$ and $|n_2| \ges N$, 
from the mean value theorem with \eqref{I0a}, we have
\begin{align}
|m(n_1+n_2)-m(n_2)| \les  N^{1-s}|n_2|^{-2+s}
|n_1|.
\label{Q7a}
\end{align}

\noi
Let $\star _n= \{ n_1, n_2 \in \Z^2: n = n_1 + n_2, \, |n_1| \leq \frac{N^\frac{1}{2}}3,
\, |n_2| > \frac N3 \}$.
By  \eqref{Q7a}, the fact that  $m(n_1) \equiv 1$
on $\star_n$, 
and 
 Young's inequality
followed by Cauchy-Schwarz inequality (in $n_1$),
we have
 \begin{align}
\begin{split}
\| B_2\|_{L^2} 
& = \bigg\|\sum_{\star_n }
\big(m(n_2)-m(n_1+n_2)\big)\ft f (n_1)\ft g(n_2)\bigg\|_{\l^2_n}\\
& \les N^{1-s}
\Bigg\| \sum_{\star_n} \frac{\jb{n_1}^{1+\s+\delta}}{|n_2|^{2-s-\delta}}
\bigg|\frac{\ft f(n_1)}{\jb{n_1}^{\s+\delta}}\bigg| \bigg|\frac{\ft g(n_2)|}{|n_2|^{\delta}}\bigg|
\Bigg\|_{\l^2_n}\\
& \les N^{-\frac {1-\s}2  + \frac 32 \dl}
\| \jb{n_1}^{-\s - \dl} \ft f(n_1) \|_{\l^1_{n_1}}
\|g\|_{H^{-\dl} }\\
& \les N^{-\frac {1-\s}2  + \frac 32 \dl}
\|f\|_{H^{1-\s}}\|g\|_{H^{-\dl} }.
\end{split}
\label{Q7b}
\end{align}

As for $B_3$, by H\"older's inequality, Sobolev's embedding theorem, and 
applying \eqref{I2} twice, 
we have
\begin{align}
\begin{split}
\|B_3\|_{L^2} 
& \le \|If_{\ges N^{\frac12}}\|_{L^2} \|Ig\|_{L^\infty} \\
& \les N^{-\frac{1-\s}{2}} \|f \|_{H^{1-\s}}\|Ig\|_{W^{3\delta,\delta^{-1}}}\\
& \les   N^{-\frac{1-\s}{2}+ 4 \delta} \|f\|_{H^{1-\s}}\|g\|_{W^{-\delta,\delta^{-1}}}
\end{split}
\label{Q8}
\end{align}

\noi
for $\dl > 0$ sufficiently small.

Lastly, from 
\eqref{I2}
and Lemma \ref{LEM:bilin}\,(ii), we have
\begin{align}
\begin{split}
\|B_4\|_{L^2} 
& \les  N^{2\dl} \| f_{\ges N^\frac12} g \|_{H^{-2\dl}}\\
& \les N^{2\dl}  \| f_{\ges N^\frac{1}{2}}\|_{H^{2\dl}} \|g\|_{W^{-2\dl , \dl^{-1}}}\\
& \les N^{-\frac {1-\s}2  + 3\dl}  \| f_{\ges N^\frac{1}{2}}\|_{H^{1-\s}} \|g\|_{W^{-2\dl , \dl^{-1}}}
\end{split}
\label{Q9}
\end{align}

\noi
for $\dl > 0$ sufficiently small.

Putting \eqref{Q5}, \eqref{Q6}, \eqref{Q7b}, 
and \eqref{Q8}, and \eqref{Q9} together, 
we obtain \eqref{Q5a}.
 \end{proof}

 From Lemmas \ref{LEM:C1} and \ref{LEM:C2}, 
 we obtain the following commutator estimate.
 For our application, 
 we will use this lemma with $g =\, \,   :\!\Psi^{3-k}\!:\,$.
 
\begin{lemma}\label{LEM:C3}
Let $\frac 23 \leq s < 1$
and  $k =  1,  2$. 
 Given $\dl > 0$, there exist small $\s_0 = \s_0(\dl) > 0$
 and $p = p(\dl) \gg1$ such that  
\begin{align}
\| I(f^k g) - (If)^k I g \|_{L^2} 
\les
N^{-\frac{1-k(1-s)}{2} + \dl } \|If\|_{H^1}^k \|g\|_{W^{-\s_0 ,p}}
\label{Q31}
\end{align}

\noi
for any sufficiently large $N \gg1$.

\end{lemma}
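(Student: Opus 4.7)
The plan is to interpolate Lemmas \ref{LEM:C1} and \ref{LEM:C2} by inserting the intermediate quantity $I(f^k)\cdot Ig$ and splitting
\begin{align*}
I(f^k g) - (If)^k Ig = \bigl[I(f^k g) - I(f^k)\, Ig\bigr] + \bigl[I(f^k) - (If)^k\bigr] Ig =: T_1 + T_2.
\end{align*}
The piece $T_1$ is exactly the type of commutator handled by Lemma \ref{LEM:C2} (with $f^k$ in place of $f$), while $T_2$ is what Lemma \ref{LEM:C1} controls in $L^2$. The target exponent $-\tfrac{1-k(1-s)}{2}$ in \eqref{Q31} then arises by tuning the parameter $\sigma$ in Lemma \ref{LEM:C2} to be $k(1-s)$.

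For $T_1$, I would apply Lemma \ref{LEM:C2} with $f^k$ in place of $f$ and $\sigma = k(1-s)$ (note $0 < k(1-s) \le \tfrac 23 < 1$ under the hypothesis $s \ge \tfrac 23$, so the lemma applies), yielding
\begin{align*}
\|T_1\|_{L^2} \lesssim N^{-\frac{1-k(1-s)}{2} + \delta} \|f^k\|_{H^{1-k(1-s)}} \|g\|_{W^{-\sigma_0, p}}.
\end{align*}
It then suffices to show $\|f^k\|_{H^{1-k(1-s)}} \lesssim \|If\|_{H^1}^k$. When $k = 1$, this is immediate from \eqref{I1} since $1-(1-s) = s$. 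For $k = 2$, I would invoke the fractional Leibniz rule (Lemma \ref{LEM:bilin}(i)) together with two-dimensional Sobolev embedding, splitting $\|f^2\|_{H^{2s-1}} \lesssim \|f\|_{L^{p_1}} \|f\|_{W^{2s-1, q_1}}$ with $\tfrac{1}{p_1} + \tfrac{1}{q_1} = \tfrac 12$, and tuning $p_1, q_1$ so that both factors embed continuously into $H^s$ and are therefore controlled by $\|If\|_{H^1}$ via \eqref{I1}.

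For $T_2$, I would apply H\"older's inequality with $\tfrac{1}{q_1} + \tfrac{1}{q_2} = \tfrac 12$ for some $q_1$ slightly greater than $2$:
\begin{align*}
\|T_2\|_{L^2} \le \|I(f^k) - (If)^k\|_{L^{q_1}} \|Ig\|_{L^{q_2}}.
\end{align*}
A direct adaptation of the proof of Lemma \ref{LEM:C1} (replacing the H\"older split $\tfrac 12 = \tfrac jq + \tfrac{1}{2+\delta}$ by $\tfrac{1}{q_1} = \tfrac jq + \tfrac{1}{q_1 + \delta}$) yields $\|I(f^k) - (If)^k\|_{L^{q_1}} \lesssim N^{-1+k(1-s)+\delta/2} \|If\|_{H^1}^k$. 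For the second factor, Sobolev embedding combined with \eqref{I2} gives $\|Ig\|_{L^{q_2}} \lesssim N^{\delta/2} \|g\|_{W^{-\sigma_0, p}}$, provided $\sigma_0$ is taken small enough and $q_2$ sufficiently large. Since $-1 + k(1-s) \le -\tfrac{1-k(1-s)}{2}$ whenever $k(1-s) \le 1$ (which holds here), the $T_2$ contribution is dominated by the target bound.

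The main obstacle is the $k = 2$ product estimate $\|f^2\|_{H^{2s-1}} \lesssim \|If\|_{H^1}^2$: at the lower endpoint $s = \tfrac 23$ the Kato--Ponce split sits essentially on the borderline of the Sobolev embedding $H^s \hookrightarrow L^p$ in two dimensions, so one must enlarge $\sigma$ infinitesimally beyond $k(1-s)$ and absorb the price into the small $N^\delta$ slack already present in Lemma \ref{LEM:C2}. This is precisely what produces the $+\delta$ in the exponent of \eqref{Q31} rather than a clean power of $N$.
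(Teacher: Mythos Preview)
Your proposal is correct and follows essentially the paper's proof: the paper uses the identical decomposition $D_1 + D_2 = T_1 + T_2$, handles $D_1$ via Lemma~\ref{LEM:C2} with $\sigma = k(1-s)$ together with the product estimate $\|f^k\|_{H^{1-k(1-s)}} \lesssim \|f\|_{H^s}^k$ (proved cleanly by Sobolev plus Lemma~\ref{LEM:bilin}(i), with no genuine borderline issue at $s=\tfrac23$), and handles $D_2$ slightly more simply than you propose --- rather than adapting Lemma~\ref{LEM:C1} to $L^{q_1}$, it uses the $L^2 \times L^\infty$ H\"older split, applies Lemma~\ref{LEM:C1} as a black box, and controls $\|Ig\|_{L^\infty}$ via $W^{3\delta,\delta^{-1}} \hookrightarrow L^\infty$ and \eqref{I2}.
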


\begin{proof}
By the triangle inequality, we have 
\begin{align}
\begin{split}
\| I ( f^kg ) - (If)^k Ig\|_{L^2} 
&\le \| I(f^kg) -I(f^k)Ig\|_{L^2}
+ \big\|\big(I(f^k) - (If)^k\big)Ig\big\|_{L^2}\\
& =: D_1 + D_2.
\end{split}
\label{Q32}
\end{align}

By Sobolev's inequality (with $s > \frac 12$)
and the fractional Leibniz rule (Lemma \ref{LEM:bilin}\,(i)), 
we have
\begin{align} 
\|f^{k}\|_{H^{1-k(1-s)}} 
\les \|f^k\|_{W^{s,\frac{2}{1+(k-1)(1-s)}}}
\les \|f\|_{H^{s}} \|f\|_{L^{\frac2{1-s}}}^{k-1} \les\|f\|_{H^{s}}^k.
\label{Q33a}
\end{align}

\noi
Thus, by  Lemma \ref{LEM:C2} with $\s = k(1-s)$, 
\eqref{Q33a}, 
and \eqref{I1}, given $\dl > 0$, we have
\begin{align}
\begin{split}
 \|D_1\|_{L^2} 
 & \les N^{-\frac{1-k(1-s)}2 + \dl} 
\|f\|^k_{H^{s}} \|g\|_{W^{-\s_0, p }}\\
&  \les N^{-\frac{1-k(1-s)}2 + \dl} 
\|If\|^k_{H^{1}} \|g\|_{W^{-\s_0, p }}
\end{split}
\label{Q33}
\end{align}

\noi
for some small $\s_0 = \s_0 (\dl)>0$ and large $p = p (\dl) \gg1 $.
On the other hand, 
by H\"older's inequality, Lemma \ref{LEM:C1}, Sobolev's embedding theorem, and
\eqref{I2}, we have
\begin{align}
\begin{split}
\|D_2\|_{L^2} & 
\le \|I(f^k) - (If)^k \|_{L^2} 
\|Ig\|_{L^\infty} \\
& \les N^{-1+k(1-s)} \|If\|_{H^1}^k \|Ig\|_{W^{3\delta,\delta^{-1}}} \\
& \les N^{-1+k(1-s) + 4\delta} \|If\|_{H^1}^k 
\|g\|_{W^{-\delta,\delta^{-1}}}.
\end{split}
\label{Q34}
\end{align}

Putting \eqref{Q32}, \eqref{Q33}, and \eqref{Q34} together, 
we obtain \eqref{Q31}.
\end{proof}

We conclude this subsection by 
presenting useful
estimates for controlling the Gronwall part of our hybrid $I$-method argument.

\begin{lemma}\label{LEM:C4}
\textup{(i)}
 Let $k = 0,1$. Then, for any $0 \le \g  \le 1-s$, 
 we have 
 \begin{align*}
\bigg|\int_{\T^2} (\dt I v(t) )(I v(t))^k I w (t) \, dx \bigg|
\les N^\g\big(1 + E^{\frac34}(I \vec v )(t) \big) \| w(t) \|_{ W_x^{-\g, 4} }
\end{align*}

\noi
for any $t \geq 0$, 
where $E$ is the energy defined  in \eqref{Hamil}.

\smallskip

\noi
\textup{(ii)}
There exists $c>0$ such that 
\begin{align}
\begin{split}
\bigg|\int_{t_1}^{t_2} \int_{\T^2} (\dt  & Iv)   (Iv)^2 I w \, dx dt \bigg| \\
& \les 
\Bigg\{\int_{t_1}^{t_2} \bigg(E^{1+c\eta}(I\vec v)(t) + \frac{\eta}{(t-t_1)^\frac12}\bigg) dt\Bigg\}
\|I w\|_{L^{\eta^{-1}}_{[t_1, t_2],x}} , 
\end{split}
\label{Q42}
\end{align}

\noi
uniformly 
in $0 < \eta < \frac 18$
and $t_2 \ge t_1 \ge 0$, 
where  $L^p_{I, x} = L^p(I; L^p(\T^2))$
for a given time interval $I \subset \R_+$.

\end{lemma}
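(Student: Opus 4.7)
The plan is to establish both parts via H\"older's inequality in space, elementary interpolation on $\T^2$, and the observation that the energy $E(I\vec v)$ from \eqref{Hamil} controls $\|\dt Iv\|_{L^2_x}$ and $\|Iv\|_{H^1_x}$ each by $E^{1/2}$, and $\|Iv\|_{L^4_x}$ by $E^{1/4}$.

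For part (i), I would apply H\"older pointwise in $t$. When $k=0$, the pair $(L^{4/3}_x, L^4_x)$ together with $\|\dt Iv\|_{L^{4/3}_x}\leq \|\dt Iv\|_{L^2_x}$ gives $\les E^{1/2}(t)\,\|Iw(t)\|_{L^4_x}$; when $k=1$, the exponents $(L^2_x, L^4_x, L^4_x)$ produce $\les E^{3/4}(t)\,\|Iw(t)\|_{L^4_x}$. Next, I would invoke \eqref{I2} with $s_0=-\g$, $s_1=\g$, which is licit since the hypothesis $0\leq \g\leq 1-s$ is exactly the admissible range for \eqref{I2}, so that $\|Iw\|_{L^4_x} \les N^\g\|w\|_{W^{-\g,4}_x}$. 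The inequality $E^{1/2}+E^{3/4}\les 1+E^{3/4}$ closes part (i); nothing delicate arises here.

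For part (ii), I would first apply H\"older in $x$ with exponents $\bigl(2,\tfrac{4}{1-2\eta},\tfrac{4}{1-2\eta},\tfrac{1}{\eta}\bigr)$, which is admissible because $\tfrac12 + 2\cdot\tfrac{1-2\eta}{4} + \eta = 1$. The two $\|Iv\|_{L^{4/(1-2\eta)}_x}$ factors are then interpolated by the Gagliardo-Nirenberg inequality on $\T^2$:
\[
\|Iv\|_{L^{4/(1-2\eta)}_x} \les \|Iv\|_{L^4_x}^{1-2\eta}\|Iv\|_{H^1_x}^{2\eta},
\]
yielding $\|Iv\|_{L^{4/(1-2\eta)}_x}^2 \les E^{1/2+\eta}(I\vec v)$. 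Combined with $\|\dt Iv\|_{L^2_x}\les E^{1/2}$, this produces the pointwise-in-$t$ estimate
\[
\bigg|\int_{\T^2}(\dt Iv)(Iv)^2 Iw\, dx\bigg|(t) \les E(I\vec v)^{1+\eta}(t)\,\|Iw(t)\|_{L^{1/\eta}_x}.
\]

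To convert this into the form \eqref{Q42}, apply H\"older in $t$ with exponents $\bigl((1-\eta)^{-1},\eta^{-1}\bigr)$, producing
\[
\bigg(\int_{t_1}^{t_2} E(I\vec v)^{(1+\eta)/(1-\eta)}(t)\,dt\bigg)^{1-\eta}\|Iw\|_{L^{1/\eta}_{[t_1,t_2],x}},
\]
where $(1+\eta)/(1-\eta)\leq 1+c\eta$ uniformly on $\eta\in(0,\tfrac18)$, so that the base integral is already of the target shape $\int E^{1+c\eta}\,dt$. The main obstacle is to remove the outer $(1-\eta)$-th power while preserving the clean product structure in \eqref{Q42}. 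I expect this to be handled by a weighted Young inequality pairing $E^{1+\eta}\|Iw\|_{L^{1/\eta}_x}$ against the singular time weight $(t-t_1)^{-1/2}$: the dominant regime retains the $\int E^{1+c\eta}\,dt$ contribution multiplying $\|Iw\|_{L^{1/\eta}_{t,x}}$, while the residual pieces that fall outside the H\"older pairing get absorbed into the time-singular reservoir
\[
\int_{t_1}^{t_2}\frac{\eta}{(t-t_1)^{1/2}}\,dt = 2\eta\sqrt{t_2-t_1}.
\]
Ensuring the implicit constant and the exponent $c$ remain uniform across the full range $\eta\in(0,\tfrac18)$ and $t_2\geq t_1\geq 0$ is the most delicate point of the argument; the spatial H\"older step and the Gagliardo-Nirenberg step themselves are routine.
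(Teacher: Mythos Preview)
Your approach is essentially identical to the paper's: part (i) is the same H\"older-plus-\eqref{I2} argument (the paper uses $(L^2,L^4^k,L^{4/(2-k)})$ instead of your $(L^{4/3},L^4)$ when $k=0$, but this is immaterial), and in part (ii) your spatial H\"older split and Gagliardo--Nirenberg step yield the same pointwise bound $E^{1+\eta}(t)\|Iw(t)\|_{L^{1/\eta}_x}$ as the paper's $(L^2,L^4,L^{4/(1-4\eta)},L^{1/\eta})$ split followed by interpolation; both then apply H\"older in $t$ to arrive at $\big(\int E^{(1+\eta)/(1-\eta)}\big)^{1-\eta}\|Iw\|_{L^{1/\eta}_{t,x}}$.

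The only place your write-up is incomplete is the final step, where you say you ``expect'' a weighted Young inequality to remove the outer power. The paper's execution is concrete and short: apply H\"older in $t$ with $p=\tfrac{1-\eta}{1-2\eta}$ (so $p'=\tfrac{1-\eta}{\eta}$) to get
\[
\Big(\int_{t_1}^{t_2} f\,dt\Big)^{1-\eta}\le \Big(\int_{t_1}^{t_2} f^{p}\,dt\Big)^{(1-\eta)/p}(t_2-t_1)^{(1-\eta)/p'},
\]
then Young's inequality with $q=\tfrac{1}{1-2\eta}$, $q'=\tfrac{1}{2\eta}$ (note $q\cdot\tfrac{1-\eta}{p}=1$ and $q'\cdot\tfrac{1-\eta}{p'}=\tfrac12$) gives
\[
\Big(\int_{t_1}^{t_2} f\,dt\Big)^{1-\eta}\le (1-2\eta)\int_{t_1}^{t_2} f^{(1-\eta)/(1-2\eta)}\,dt + 2\eta\,(t_2-t_1)^{1/2}.
\]
Applied to $f=E^{(1+\eta)/(1-\eta)}$ this produces $\int E^{(1+\eta)/(1-2\eta)}\,dt + \int \eta(t-t_1)^{-1/2}\,dt$, and $(1+\eta)/(1-2\eta)\le 1+c\eta$ uniformly on $(0,\tfrac18)$. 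So the ``reservoir'' term arises not from pairing $E^{1+\eta}\|Iw\|$ against the weight, but from the Young split of the $(1-\eta)$-th power of a single time integral; once you insert these two lines your proof is complete and matches the paper.
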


 For our application, 
 we will use  Part (i) 
with $w = \, :\!\Psi^{3-k}\!:\, $, $k = 0,1$, 
and Part (ii) with $w = \Psi$.

\begin{proof}(i) 
Let $k = 0, 1$. 
Then, by H\"older's inequality, \eqref{Hamil}, and \eqref{I2},  we have
\begin{align*}
\bigg|\int_{\T^2} (\dt  Iv(t)) (Iv(t))^k  Iw(t) \, dx\bigg| 
& \le\|\dt Iv(t)\|_{L^2} \|Iv(t) \|_{L^4}^k \|I w(t)\|_{L^{\frac 4 {2-k}}} \\
& \les\|\dt Iv(t)\|_{L^2} \|Iv (t)\|_{L^4}^k \|I w(t)\|_{L^4} \\
& \les  N^\g  E(I\vec v)^{\frac12 + \frac k4} (t)
\|w(t)\|_{W^{-\g , 4}}.
 \end{align*}

\smallskip
\noi
(ii) By interpolation with \eqref{Hamil}, we have 
 \begin{align*}
\|I v\|_{W^{\theta, \frac 4 {1+\theta}}}
\les \|Iv\|_{H^1}^\ta \|Iv\|_{L^4}^{1-\ta}
  \les E^\frac \ta2(I \vec v)  E^\frac {1-\ta}4(I\vec v)
= E^{\frac{1+\ta}{4}} (I\vec v)
 \end{align*}
 
 \noi
 for $0 \leq \ta \leq 1$.
 Then, by Sobolev's inequality, we have 
 \begin{align}
\|I v\|_{L^{\frac 4 {1- \theta}}}
  \les  E^{\frac{1+\ta}{4}} (I\vec v), 
\label{Q43}
\end{align}

\noi
where  the implicit constant is uniform in $\theta$ as long as $0\le \theta \le \theta_{\max} <1$. 
Set  $\theta = 4 \eta$.
 Then, by H\"older's inequality (in $x$), 
 \eqref{Hamil},  \eqref{Q43}, 
 and H\"older's inequality (in $t$), 
 we obtain
 \begin{align}
\begin{split}
\bigg|\int_{t_1}^{t_2} \int_{\T^2} (\dt Iv) (Iv)^2 I w \, dx dt\bigg| 
&\le \int_{t_1}^{t_2}
\|\dt Iv\|_{L^2_x} 
\|Iv\|_{L^4_x} \|Iv\|_{L^{\frac 4 {1-4\eta}}_x} \|Iw \|_{L_x^{\eta^{-1}}} dt \\
& \les \int_{t_1}^{t_2} E^{1 + \eta} (I \vec v) \|I w \|_{L_x^{\eta^{-1}}} dt \\
& \les \bigg(\int_{t_1}^{t_2} E^{\frac{1 + \eta}{1-\eta}}(I\vec v)dt\bigg)^{1-\eta}
\|I w \|_{L_{[t_1, t_2], x}^{\eta^{-1}}}, 
\end{split}
\label{Q44}
 \end{align}
 
 \noi
uniformly in  $0 < \eta < \frac 18$.

Next, we estimate the first factor on the right-hand side in \eqref{Q44}.
Let 
\[ p= p(\eta) = \frac{1-\eta}{1-2 \eta} 
\qquad 
\text{and}\qquad 
q = q(\eta) = \frac{1}{1-2\eta}.\]

\noi
This implies that 
\[ p' = \frac {1-\eta}{\eta}
\qquad 
\text{and}\qquad 
 q' = \frac{1}{2\eta}.\]

\noi
Then, 
by  H\"older's and Young's inequalities, we have
\begin{align*}
\bigg(\int_{t_1}^{t_2} f (t) dt \bigg)^{1-\eta} 
&\le \bigg(\int_{t_1}^{t_2} |f(t) |^p dt \bigg)^{\frac{1-\eta}{p}} (t_2 - t_1)^{\frac{1-\eta}{p'}}\\
&\le \frac 1{q}\bigg(\int_{t_1}^{t_2} |f(t)|^pdt \bigg)^{q\cdot \frac{1-\eta}{p}} + \frac{1}{q'} (t_2 - t_1)^{q'\cdot \frac{1-\eta}{p'}} \\
&= (1-2\eta) \int_{t_1}^{t_2} |f(t) |^{\frac{1-\eta}{1-2 \eta}} \, dt + 2\eta (t_2 - t_1)^\frac12.
\end{align*}

\noi
Applying this to 
 the first factor on the right-hand side in \eqref{Q44}, we obtain
\begin{align}
\bigg(\int_{t_1}^{t_2} E^{\frac{1+\eta}{1-\eta}}(I\vec v)(t)dt \bigg)^{1-\eta} 
\lesssim 
\int_{t_1}^{t_2} \bigg(E^{\frac{1+\eta}{1-2\eta}}(I\vec v)(t) + \frac{\eta}{(t-t_1)^\frac12}\bigg) dt.
\label{Q45}
\end{align}

\noi
Putting \eqref{Q44} and \eqref{Q45} together, 
we obtain \eqref{Q42}.
\end{proof}

\subsection{Proof of Theorem \ref{THM:GWP1}}
\label{SUBSEC:3.2}

In this subsection, we use the estimates 
in the previous subsection
and implement an iterative argument to 
construct a solution to \eqref{SNLW6} on a time interval $[0, T]$
for any given $T \gg 1$.
Unlike the usual application of the $I$-method
(where the parameter $N$ depends only on the target time $T\gg1 $), 
we will need to construct an increasing sequence $\{N_k \}_{k \in \Z_{\ge 0}}$ of parameters
over local-in-time intervals, which allows
us to proceed over a time interval of fixed length at each iteration step.

Fix $\frac 45 < s < 1$ and a target time $T\gg 1$.
Our main goal is to control
growth of  the modified energy $E(I\vec v) (t)$
on the time interval $[0, T]$.
We use the following short-hand notation
for the modified energy:
\[ E(t) = E(I\vec v )(t).\]

\noi
Then,  from \eqref{E1}, Lemmas \ref{LEM:C1}, \ref{LEM:C3}, 
and \ref{LEM:C4}, 
we have 
\begin{align} 
\begin{split}
E(t_2) & - E(t_1) \\
\les
& \int_{t_1}^{t_2} N^{-1+3(1-s)} E^2(t) dt \\
& +  \sum_{k=1}^2 \int_{t_1}^{t_2} N^{-\frac{1-k(1-s)}2 + \dl}
E^\frac {k+1} 2(t)\, \| \!:\!\Psi^{3-k}(t) \!:\! \|_{W^{-\s_0, p }_x} dt \\
& + \sum_{k=0}^1 \int_{t_1}^{t_2} N^\gamma (1 + E^{\frac34}(t) ) 
\|\!:\!\Psi^{3-k}(t)\!:\!\|_{ W_x^{-\gamma, 4}}  dt \\
& + 
\Bigg\{\int_{t_1}^{t_2} \bigg(E^{1+c\eta}(t) + \frac{\eta}{(t-t_1)^\frac12}\bigg) dt\Bigg\}
\|I \Psi\|_{L^{\eta^{-1}}_{[t_1, t_2],x}}
\end{split}
\label{Y1}
\end{align}

\noi
for any $t_2 \geq t_1 \geq 0$.

Before proceeding to the following  crucial proposition, let us introduce some notations.
Given $j \in \Z_{\ge 0}$, set $V_j = V_j(\o)$ by
\begin{align*}
V_j =  \max_{k = 1, 2} \|:\!\Psi^{3-k}\!:\|_{L^\infty_{[j, j+1]}W^{-\s_0, p}_x} 
+ \max_{k = 0, 1}\|:\!\Psi^{3-k}\!:\|_{L^\infty_{[j, j+1]}W^{-\g, 4}_x} 
\end{align*}

\noi
and define $V = V(\o)$ by 
\begin{align}
e^{V^\frac{1}{3}} = \sum_{j = 0}^\infty e^{-\ta j} e^{V_j^\frac{1}{3}}
\label{K2}
\end{align}

\noi
for some  $\ta > 0$.
Note that $V$ is almost surely finite, 
since,  
by applying 
\eqref{P0z} in 
Lemma \ref{LEM:Psi}
and choosing  $\ta  $ sufficiently large, we have 
\begin{align*}
 \E\Big[ 
e^{V^\frac{1}{3}} \Big] = \sum_{j = 0}^\infty e^{-\ta j} \E\Big[e^{V_j^\frac{1}{3}}\Big]
\leq \sum_{j = 0}^\infty e^{-\ta j} e^{c (j+1)} < \infty.
\end{align*}

Now, given $T\gg1$, set 
\begin{align}
M_T =  \max_{k = 1, 2} \|:\!\Psi^{3-k}\!:\|_{L^\infty_TW^{-\s_0, p}_x} 
+ \max_{k = 0, 1}\|:\!\Psi^{3-k}\!:\|_{L^\infty_TW^{-\g, 4}_x} .
\label{K2a}
\end{align}

\noi
Then, noting from \eqref{K2} that 
\[ V_j^\frac{1}{3} \leq V^\frac{1}{3} + \ta j, \] 

\noi
we have
\begin{align}
M_T =  \max_{j \leq T} V_j \le
 V^\frac{1}{3} + \ta T 
 \les V + T^3.
\label{K3}
\end{align}

We also define $R = R(\o)$ by 
\begin{align}
R = \sum_{N = 1}^\infty \sum_{j = 1}^\infty
e^{-\ta j \log N}
\int_0^j \int_{\T^2}e^{|I_N \Psi (x, t) |} dx dt, 
\label{K4}
\end{align}

\noi
where $I = I_N$ is the $I$-operator defined in \eqref{I0}.
Then, by applying Lemma \ref{LEM:log}
and choosing  $\ta  $ sufficiently large, we have 
\begin{align*}
\E[ R ] 
& = \sum_{N = 1}^\infty \sum_{j = 1}^\infty
e^{-\ta j \log N} 
\int_0^j \int_{\T^2}\E \Big[e^{|I_N \Psi (x, t) |}\Big] dx dt\\
& \les  \sum_{N = 1}^\infty \sum_{j = 1}^\infty
e^{-\ta j \log N}
j  e^{c j\log N} < \infty.
\end{align*}

\noi
Thus, $R$ is finite almost surely.
In the following, we assume that $R = R(\o) \geq 1$.

In the following, we fix $\o \in \O$
such that $V = V(\o) < \infty$ and $R = R(\omega) < \infty$
and prove global well-posedness by pathwise analysis.
The following proposition plays a fundamental role
in  our iterative argument to prove Theorem \ref{THM:GWP1}.

\begin{proposition} \label{PROP:LGWP}
Let $\frac 23 <  s < 1$,  $T \ge T_0 \gg 1$, and $N \in \N$.
Moreover, let $V = V(\o) < \infty$ and $R = R(\o) < \infty$
be as in \eqref{K2} and \eqref{K3}.
Then, there exist $\al = \al(s) $,  $\be = \be(s)> 0$
with $\al > \be$ such that 
if \begin{align}
E(t_0) \le N^\beta
\label{Y2}
\end{align}

\noi
for some $0 \le t_0 < T$, then
there exists
$\tau = \tau (s, N, T, V, R) = \tau (s, N, T, \o) >0 $
with  $\tau \le t_\ast(R) \le 1$
such that 
\begin{align}
E(t) \le N^\alpha
\label{Y3}
\end{align} 

\noi
for any $t $
such that  
$t_0 \le t \le \min(T, t_0 + \tau)$. 
\end{proposition}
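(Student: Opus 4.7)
The plan is to prove Proposition~\ref{PROP:LGWP} by a continuity/bootstrap argument for $E(t) := E(I\vec v)(t)$, driven by the integral inequality \eqref{Y1} and by the pathwise-finite quantities $V$ and $R$. Since the $I$-SNLW \eqref{SNLW8} is locally well-posed (inherited from local well-posedness of \eqref{SNLW6} via the isomorphism \eqref{I1}), $E(t)$ is continuous in $t$; given $\alpha > \beta > 0$ to be fixed as functions of $s$ alone, and starting from $E(t_0) \leq N^\beta$, I set $\tau_\ast := \sup\{\tau > 0 : E(t) \leq N^\alpha \text{ on } [t_0,\min(T, t_0+\tau)]\}$ and aim for an explicit lower bound on $\tau_\ast$.

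On any subinterval $[t_0,t_0+\tau]$ with $\tau \leq \tau_\ast$ I apply \eqref{Y1} and treat the four contributions in turn. The commutator-type term $\int N^{-1+3(1-s)}E^2\,dt$ is bounded by $N^{-1+3(1-s)+2\alpha}\tau$ under the bootstrap, which is $\leq \tfrac{1}{10}N^\alpha$ provided $\tau \leq c\,N^{3s-2-\alpha}$; this forces $\alpha < 3s-2$, feasible precisely because $s > \tfrac{2}{3}$ (a concrete choice being $\alpha := \tfrac{3s-2}{4}$ and $\beta := \alpha/2$). The two Wick lines of \eqref{Y1} involve $\|:\!\Psi^{3-k}\!:\|_{W^{-\sigma_0,p}_x}$ and $\|:\!\Psi^{3-k}\!:\|_{W^{-\gamma,4}_x}$, which are controlled uniformly on $[0,T]$ by $M_T \lesssim V + T^3$ via \eqref{K3}; taking $\delta,\gamma > 0$ small enough (permitted since $:\!\Psi^k\!:\,\in W^{-\varepsilon,\infty}$ for every $\varepsilon > 0$) reduces these to contributions of the form $N^{-c(s)+\frac{k+1}{2}\alpha}(V+T^3)\tau$ and $N^\gamma(1+N^{3\alpha/4})(V+T^3)\tau$, both acceptable after further smallness of $\tau$.

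The core obstacle is the fourth term $\{\int_{t_0}^{t_0+\tau}(E^{1+c\eta}+\eta(t-t_0)^{-1/2})\,dt\}\,\|I\Psi\|_{L^{1/\eta}_{t,x}}$, because $I\Psi$ is \emph{not} bounded but only logarithmically divergent from $L^\infty$ by Lemma~\ref{LEM:log}. I reduce the $L^{1/\eta}_{t,x}$-norm to the almost-surely finite exponential integrability quantity $R$ from \eqref{K4}. Using the pointwise inequality $|y|^{1/\eta} \leq (1/\eta)!\,e^{|y|}$ together with the definition of $R$, for any integer $j \geq \lceil t_0+\tau\rceil$ one finds
\[
\|I_N\Psi\|_{L^{1/\eta}_{[0,j]\times\T^2}}^{1/\eta} \leq \Big(\tfrac{1}{\eta}\Big)!\int_0^j\!\!\int_{\T^2} e^{|I_N\Psi(x,t)|}\,dx\,dt \leq \Big(\tfrac{1}{\eta}\Big)!\,R\,e^{\theta j\log N},
\]
so Stirling yields $\|I\Psi\|_{L^{1/\eta}_{t,x}} \lesssim \eta^{-1}R^\eta N^{\theta T\eta}$. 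Choosing $\eta = \eta(s,\alpha,T)$ small enough that $\theta T\eta$ becomes a negligible exponent on $N$, and combining with the first factor (bounded by $N^{\alpha(1+c\eta)}\tau + \eta\sqrt{\tau}$), gives a contribution $\leq \tfrac{1}{10}N^\alpha$ provided $\tau \leq c(R,V,T)\cdot N^{-\kappa(s,\alpha)}$ for an explicit $\kappa > 0$. This smallness requirement implicitly defines the bound $t_\ast(R) \leq 1$ appearing in the statement.

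Summing the four contributions yields $E(t_0+\tau) - E(t_0) \leq \tfrac{1}{2}N^\alpha$; combining with $E(t_0) \leq N^\beta \leq \tfrac{1}{2}N^\alpha$ (which holds once $N^{\alpha-\beta} \geq 2$) produces the strictly improved bound $E(t_0+\tau) < N^\alpha$, contradicting the maximality of $\tau_\ast$ unless $\tau_\ast \geq \tau$, which is the desired conclusion. The delicate part is the \emph{simultaneous} control of the commutator term (which forces $\tau$ to decay as a negative power of $N$) and the $\|I\Psi\|_{L^{1/\eta}}$ term (whose control requires both the exponential moments encoded by $R$ and a careful $\eta$-optimization tied to $T$); this coupling is precisely the mechanism that fuses the $I$-method and Gronwall ingredients into the hybrid scheme underlying Theorem~\ref{THM:GWP1}.
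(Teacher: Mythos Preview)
Your bootstrap argument does prove the proposition as literally stated, but it diverges from the paper in the handling of the fourth term of \eqref{Y1}, and the divergence is consequential. You freeze $\eta = \eta(s,\alpha,T)$ independent of $N$, which forces $\|I\Psi\|_{L^{1/\eta}_{t,x}} \lesssim \eta^{-1}R^{\eta}N^{\theta T\eta}$ to carry a genuine positive power of $N$ and hence yields $\tau \lesssim_{R,V,T} N^{-\kappa}$. The paper instead lets $\eta$ depend on $N$: it takes $\eta^{-1} = n \sim T\log N$, so that simultaneously $\|I_N\Psi\|_{L^n_{t,x}} \lesssim n$ and, under the bootstrap $E \leq 100 N^{\alpha}$, $E^{1+c\eta} \lesssim E$. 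The fourth term then collapses to $\int (T E(t')\log N + R(t'-t_0)^{-1/2})\,dt'$; since the two-sided bound $N^{\beta}\le F \le 200 N^{\alpha}$ gives $\log N \sim \log F$, this produces a true $G\log G$ Gronwall inequality (\eqref{K15}), whose explicit solution \eqref{K16} delivers $\tau$ subject only to \eqref{K20}--\eqref{K21}. The upshot (Remark~\ref{REM:tau}) is $\tau \sim_{V,R} T^{-1}$ \emph{uniformly in $N$} once $N$ is large. This uniformity is exactly what the iteration in Subsection~\ref{SUBSEC:3.2} needs: one applies the proposition with the increasing sequence $N_k = N_0^{\sigma^k}$ over successive intervals of the \emph{same} length $\tau$, reaching time $T$ in $\sim T/\tau$ steps. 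With your $\tau(N_k) \sim N_k^{-\kappa}$, the total time $\sum_k \tau(N_k)$ is finite and the iteration cannot cover $[0,T]$. So your simpler direct bootstrap suffices for Proposition~\ref{PROP:LGWP} in isolation, but the paper's $\eta \sim (T\log N)^{-1}$ choice and the ensuing $G\log G$ comparison are the essential refinement that makes the proposition usable for Theorem~\ref{THM:GWP1}.
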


\begin{proof}
Without loss of generality, we assume that $E(t) \geq 1$.  (This can be guaranteed by replacing
$E(t)$ by $E(t) + 1$.)
Then, from \eqref{Y1} with \eqref{K2a}, we have
\begin{align} 
\begin{split}
E(t) \,  - & \,  E(t_0) \\
\les
& \, \int_{t_0}^{t} N^{-1+3(1-s)} E^2(t') dt' \\
& +  M_T \sum_{k=1}^2 \int_{t_0}^{t} N^{-\frac{1-k(1-s)}2 + \dl}
E^\frac {k+1} 2(t') dt' \\
& + M_T  \int_{t_0}^{t} N^\gamma  E^{\frac34}(t') 
  dt' \\
& + 
\Bigg\{\int_{t_0}^{t} \bigg(E^{1+c\eta}(t') + \frac{\eta}{(t'-t_1)^\frac12}\bigg) dt'\Bigg\}
\|I \Psi\|_{L^{\eta^{-1}}_{[t_0, t],x}}
\end{split}
\label{K5}
\end{align}

\noi
for any $t \geq t_0$.

In the following, we  assume
\begin{align}
\max_{t_0 \leq \tau \leq t} E(\tau) \leq 100 N^{\al}
\label{K5a}
\end{align}

\noi
for some $t \geq t_0$, 
where $\al > \be$ is to be determined later.
Then, we show that \eqref{Y3} holds for this $t$.
%
It follows from  the continuity in time of $E(t)$
and~\eqref{Y2} with $\al > \be$
that there exists $t_1 > t_0$ sufficiently close to $t_0$
such that 
\eqref{K5a} holds true for $t_0 \le t  \le t_1$.

Letting $\eta^{-1} = n  \in \N$, 
it follows from 
\eqref{K4}
that
\begin{align*}
\| I_N \Psi\|_{L^n_{[t_1, t_2], x}}^n 
& = \int_{t_1}^{t_2}\int_{\T^2}
| I_N \Psi(x, t)|^n 
 dx dt 
 \leq n!  \int_0^T 
\int_{\T^2}
e^{| I_N \Psi(x, t)|}
 dx dt \\
& \le
n! e^{\ta T \log N} R.
\end{align*}

\noi
With $n! \leq n^n$, this implies
\begin{align*}
\| I_N \Psi\|_{L^n_{[t_1, t_2], x}}
& \le
n e^{\frac 1n \ta T \log N} R^\frac{1}{n}.
\end{align*}

\noi
We now choose 
\begin{align*}
n \sim \ta T \log N +  c \al \log (100 N) \sim T \log N \gg 1.
\end{align*}

\noi
Then, 
under the assumption   \eqref{K5a} and $\eta = n^{-1}$,
we can estimate the last term on the right-hand side of \eqref{K5}
as
\begin{align} 
\begin{split}
& \Bigg\{\int_{t_0}^{t} \bigg(E^{1+c\eta}(t') + \frac{\eta}{(t'-t_1)^\frac12}\bigg) dt'\Bigg\}
\|I \Psi\|_{L^{\eta^{-1}}_{[t_0, t],x}}\\
& 
\leq 
\int_{t_0}^{t} \bigg(E(t') n e^{\frac 1 n (\ta T\log N + c\al \log (100N )) } + \frac{e^{\frac 1n \ta T \log N} R^\frac{1}{n}}{(t'-t_1)^\frac12}\bigg) dt'\\
& 
\leq 
\int_{t_0}^{t} \bigg(T E(t') \log N  + \frac{R}{(t'-t_1)^\frac12}\bigg) dt', 
\end{split}
\label{K8}
\end{align}

\noi
where we used the assumption that $n \geq 1$ and $R = R(\o) \geq 1$
in the last step.

Next, we define $F$ by 
\begin{align}
 F(t) = \max_{t_0 \le \tau \le t}  E(\tau) - E(t_0) + \max(E(t_0), N^\be).
 \label{K8a}
\end{align}

\noi
Then, under the assumption \eqref{K5a}, we have
we have 
\begin{align}
N^\be \leq F(t) \leq 200 N^\al
\label{K9}
\end{align}

\noi
for $t_0 \le t \le t_1$.
In particular, we have
\begin{align}
\log F(t) \sim \log N.
\label{K10}
\end{align}

\noi
Moreover, from \eqref{K9}, we have
 \begin{align} 
\begin{cases}
  N^{-1+3(1-s)} F^2(t) \les N^{-\al}F^2(t) \leq F(t), \\
  N^{-\frac{1-2(1-s)}2 + \dl}F^\frac{3}{2}(t)
 \les N^{-\frac \al2 }F^\frac 32(t) \leq F(t),\\
  N^{-\frac{1-(1-s)}2 + \dl} F(t) \leq  F(t), \\
  N^\g  F^{\frac34}(t) \les 
  N^\g F^{-\frac{1}{4}}(t) F(t)
 \le F(t), 
\end{cases}
\label{K11}
\end{align}

\noi
provided that 
\begin{align}
\alpha \le 1 - 3(1-s) = -2 + 3s, \quad 
\dl \le \min\big(\tfrac{2s - 1 - \al}{2}, \tfrac s2\big),
\quad \text{and}\quad  \g \le \tfrac\be 4.
\label{K12}
\end{align}

\noi 
Here, $\g = \g(s) > 0$ is a small constant,  appearing in Lemma \ref{LEM:C4}.
The first condition in~\eqref{K12} with $\al > 0$ requires $s> \frac 23$.
Hence, from \eqref{K5} with \eqref{K8}, \eqref{K8a}, 
\eqref{K10}, and \eqref{K11}
followed by \eqref{K3}, we obtain 
\begin{align} 
\begin{split}
F(t) \,  & -  \,   F(t_0) \\
& \les
 (1 + M_T) \int_{t_0}^{t} F(t') dt' 
 + 
\int_{t_0}^{t} \bigg(T F(t') \log F(t')  + \frac{R}{(t'-t_1)^\frac12}\bigg) dt'\\
& \les
 (1 + V + T^3) \int_{t_0}^{t} F(t') dt' 
 + 
\int_{t_0}^{t} \bigg(T F(t') \log F(t')  + \frac{R}{(t'-t_1)^\frac12}\bigg) dt'\\
& \les
 (1 + V + R  + T)
\int_{t_0}^{t} \bigg( F(t') (\log F(t') + T^2)   + \frac{R}{(t'-t_1)^\frac12}\bigg) dt'
\end{split}
\label{K13}
\end{align}

\noi
for any $t_0 \le t \le t_1$ such that \eqref{K9} holds.
Denoting by $C_0$ the implicit constant in \eqref{K13}, 
we define $G$ by 
\begin{align}
 G(t) = F(t) - 2 C_0 R (t - t_0)^\frac{1}{2}.
 \label{K14}
\end{align}

\noi
Then, it follows from \eqref{K13} that 
\begin{align} 
\begin{split}
G(t)  -  G(t_0) 
\les (1 + V + R  + T)
\int_{t_0}^{t}  G(t') (\log G(t') + T^2)  dt'
\end{split}
\label{K15}
\end{align}

\noi
for any $t_0 \le t \le \min( t_1, t_0 + t_*(C_0, R))$
such that 
\begin{align}
2 C_0 R (t - t_0)^\frac{1}{2} \sim 1
\label{K15a}
\end{align}

\noi
(which guarantees $G(t) \sim F(t)$ in view of \eqref{K14}).

Now, note that the equation 
\[ \dt H(t) = \kk H(t) (\log H(t) + T^2)\]

\noi
has an explicit solution 
\[ H(t) = \exp\Big( e^{\kk t } (\log H(0) + T^2) - T^2\Big).\] 

\noi
Then, by comparison, we deduce from \eqref{K15} that 
\begin{align}
G(t) \leq  \exp \Big( e^{C(1 + V + R + T)  (t - t_0) } (\log G(t_0) + T^2) - T^2\Big).
\label{K16}
\end{align}

Recall from 
\eqref{K14} and 
\eqref{K8a} that 
$G(t_0) = N^\be$.
Then, under the condition
\begin{align}
e^{C(1 + V + R + T)  (t - t_0) } (\be \log N  + T^2) \le \al \log N +  T^2 - \log 2, 
\label{K17}
\end{align}

\noi
the bound \eqref{K16} implies 
\begin{align}
G(t) \le \tfrac 12 N^\al
\label{K18}
\end{align}

\noi
for any $t_0 \le t \le \min( t_1, t_0 + t_*(C_0, R))$.
Then, we conclude from 
\eqref{K8a}, 
\eqref{K14}, and  \eqref{K15a}
that 
\begin{align}
E(t) \le F(t) \le   N^\al
\label{K19}
\end{align}

\noi
for any $t_0 \le t \le \min( t_1, t_0 + t_*(C_0, R))$.
This in turn guarantees the conditions \eqref{K5a}
and~\eqref{K9}.
Therefore, 
by a standard continuity argument, 
we conclude that 
the bounds
\eqref{K18} and 
\eqref{K19} hold
for any $t_0 \le t \le t_0 + t_*(C_0, R)$
sufficiently close to $t_0$ such that 
the condition \eqref{K17} holds.

Finally, let us rewrite the condition \eqref{K17}.
Let $\al = \al(s) > \be = \be(s) $ satisfy the conditions \eqref{K12}.
Then, there exists small 
 $0 < \tau \leq  t_*(C_0, R)$
such that 
\begin{align}
 \al  - e^{C(1 + V + R + T)  \tau }\be \geq c_0 > 0.
 \label{K20}
\end{align}

\noi
Then, 
by choosing $\tau = \tau (s, N, T, V, R) >0 $ sufficiently small
such that 
\begin{align}
  e^{C(1 + V + R + T)  \tau } - 1
\leq \frac{c_0 \log N - \log 2}{T^2}, 
 \label{K21}
\end{align}

\noi
we can guarantee the condition \eqref{K17}
and hence the desired bound \eqref{K19}
for $0 \le t - t_0 \le \tau$.
This conclude the proof of Proposition \ref{PROP:LGWP}.
\end{proof}

\begin{remark}\label{REM:tau}\rm
By choosing $\tau \sim_{V, R} T^{-1}$ sufficiently small, 
we can  guarantee the condition~\eqref{K20}.
\end{remark}

We now present a proof of Theorem \ref{THM:GWP1}.
Fix  $\frac 45 < s <1$ and $T\gg1 $.
Moreover, we fix $\o \in \O$ such that $V = V(\o) < \infty$
and $R = R(\o)< \infty$.
Then, 
 let the parameters
$\al, \be,   \tau$ be as in Proposition~\ref{PROP:LGWP}.

Fix $N_0 \gg 1$ (to be determined later).
Then, for $k \in \Z_{\geq 0}$, 
define an increasing sequence $\{N_k\}_{k \in \Z_{\ge 0}}$ by setting
\begin{align}
N_{k } =  N_0^{\s^{k}}
\label{Z2}
\end{align}

\noi
for some $\s > 1$  such that 
\begin{align}
N_{k+1}^{2(1-s)} N_k^\al + N_k^{2\al} \ll N_{k+1}^\be, 
\label{Z3}
\end{align}

\noi
which requires $\be > 2(1-s)$.
Recalling that $\al > \be$ and \eqref{K12}, 
we have the following constraints:
\begin{align*}
2(1-s) < \be < \al \le 1 - 3(1-s),
\end{align*}

\noi
which imposes the condition $s > \frac 45$.
Suppose that 
\begin{align}
E(I_{N_k} \vec v)(t) \leq N^\al_k
\label{Z4}
\end{align}

\noi
for some $k$ and $t \geq 0 $.
Then,  by 
\eqref{I1}, Sobolev's inequality, \eqref{Z4}, and \eqref{Z3}, 
we have 
\begin{align}
\begin{split}
E(I_{N_{k+1}}\vec v)(t) 
& \les \|I_{N_{k+1}}\vec v\|_{\H^1}^2 +  \|I_{N_{k+1}}v\|_{L^4}^4 \\
& \les N_{k+1}^{2(1-s)} \|\vec v\|_{\H^{s}}^2 + \|v\|_{H^{\frac12}}^4 \\
& \les N_{k+1}^{2(1-s)} \|I_{N_k} \vec v\|_{\H^1}^2 + \|I_{N_k} v\|_{H^1}^4\\
&  \les  N_{k+1}^{2(1-s)}  E(I_{N_k}\vec v) + E(I_{N_k}\vec v)^2 \\
& \les  N_{k+1}^{2(1-s)} N_k^\al+ N_k^{2\al}\\
& \ll N_{k+1}^\be.
\end{split}
\label{Z5}
\end{align}

We are now ready to implement  an iterative argument.
Given $(\phi_0, \phi_1) \in \H^s(\T^2)$, choose $N_0 = N_0(\phi_0, \phi_1, s) \gg 1$
such that 
\begin{align} \label{eqn: sizeNk0}
E(I_{N_{0}}\vec v) (0) \leq N_{0}^\be.
\end{align}

\noi
By applying  Proposition \ref{PROP:LGWP}, we have
\begin{align*}
E(I_{N_{0}}\vec v) (t) \leq N_{0}^\al
\end{align*}

\noi
for any $0 \le t \le  \tau$.
By \eqref{Z4} and \eqref{Z5}, this then implies 
\begin{align*}
E(I_{N_{1}}\vec v) (\tau)\leq N_{1}^\be.
\end{align*}

\noi
Applying   Proposition \ref{PROP:LGWP} once again, 
we in turn  obtain
\begin{align*}
E(I_{N_{1}}\vec v) (t) \leq N_{1}^\al
\end{align*}

\noi
for $0 \le t \le  2 \tau$.
By \eqref{Z4} and \eqref{Z5}, this then implies 
\begin{align*}
E(I_{N_{2}}\vec v) (2\tau) \leq N_{2}^\be.
\end{align*}

\noi
After iterating this argument $\big[\frac{T}{\tau}\big] + 1$ times, we 
obtain a solution $v$ to 
 the renormalized cubic SNLW \eqref{SNLW6} on
 the time interval $[0, T]$.
Since the choice of $T \gg1 $ was arbitrary, this proves global well-posedness of \eqref{SNLW6}.

\begin{remark} \label{REM:bound}\rm

Fix $T\gg1 $ and let the other parameters be as above.
Then, it follows from the argument above
and  \eqref{I1} that 
\begin{align*}
\| \vec v(t)\|_{\H^s} \les
\big( E(I_{N_{k}}\vec v) (t)\big)^\frac{1}{2} \leq N_{k}^\frac{\al}{2}
\end{align*}

\noi
for any $ 0\le t \le T $ such that 
 $k \tau \leq t \leq (k+1) \tau$, 
$k \in \Z_{\ge 0}$.
Then, using \eqref{Z2}, we have
\begin{align}
\| \vec v(t)\|_{\H^s} \les
\exp\bigg( \frac{\al}{2} \s^k \log  N_{0}\bigg)
\le
\exp\bigg( \frac{\al}{2}  \log  N_{0}
\cdot \exp \Big(\frac{ (\log \s) t }{\tau}\Big)\bigg)
\label{Z9}
\end{align}

\noi
for $0 \leq t \leq T$.
Moreover, 
in view of \eqref{eqn: sizeNk0}, we
choose $N_0 \in \N$ such that 
$1 + E(I_{N_{0}}\vec v) (0) \sim N_{0}^\be$ and thus we have
\begin{align}
\log N_0 \sim \log 
\big(2 + \| \vec v(0)\|_{\H^s}\big).
\label{Z9a}
\end{align}

In order to reach the target time $T$, 
we  iteratively apply Proposition \ref{PROP:LGWP}
$K \sim \frac{T}{\tau}$-many times.
For this purpose, we need 
to guarantee the condition \eqref{K21}.
In view of 
Remark \ref{REM:tau}
and 
\eqref{Z2} with $k = K \sim \frac T\tau$, 
the condition \eqref{K21} now reads as
\begin{align*}
  e^{C(1 + V + R + T)   T^{-1} } - 1
\leq \frac{c_0 \s^{T^2} \log N_0 - \log 2}{T^2}, 
\end{align*}

\noi
which holds true
for any sufficiently large $T\gg1$.

Finally,  from 
\eqref{Z9}, \eqref{Z9a}, and Remark \ref{REM:tau}, 
we conclude the following double exponential bound:
\begin{align}
\| \vec v(t)\|_{\H^s} 
\le
C \exp\Big( c \log 
\big(2 + \| \vec v(0)\|_{\H^s}\big)
\cdot e^{C(\o)  t^2}\Big)
\label{Bd1}
\end{align}

\noi
for any $t \geq 0$.

\end{remark}

We conclude this section by pointing out that 
by implementing a more involved version of Proposition \ref{PROP:LGWP}
(see for example the paper \cite{Tolomeo2} by the fourth author,  studying SNLW on $\R^2$), 
it is possible to improve $t^2$ in 
\eqref{Bd1} to 
$t^{\al}$ for some $\al < 2$.
For readers' convenience, however, we decided to include
the current slightly simpler and more intuitive approach, 
with  a Gronwall-type argument  with $G \log G$ as in~\eqref{K15}. 
We point out that 
we
do not know how to improve $t^2$ in \eqref{Bd1} to $t$ at this point.

\section{Almost sure global well-posedness of the hyperbolic $\Phi_2$-model}
\label{SEC:GWP2}

We present a simple local well-posedness
argument for \eqref{SNLW13} based on Sobolev's inequality.
We first  consider the following deterministic NLW:
 \begin{align}
\begin{cases}
\dt^2 v + \dt v +(1-\Dl)v  +
 \sum_{\ell=0}^k {k\choose \ell} \, \Xi_\l  \, v^{k-\ell}
=0\\
(v,\dt v)|_{t = 0}=(v_0,v_1)
\end{cases}
\label{SNLW14}
\end{align}

\noi
for given initial data $(v_0, v_1)$ and a source $(\Xi_, \dots, \Xi_k)$
with the understanding that $\Xi_0 \equiv 1$.

Given $s\in \R$, define $\mathcal{X}^s(\T^2)$ by 
\[ \mathcal{X}^s(\T^2) \deff \H^s(\T^2) \times \big(L^2([0, 1]; W^{s-1, \infty}(\T^2))\big)^{\otimes k}\]

\noi
and set
\[ \|\pmb{\Xi}\|_{\mathcal{X}^s}
= \| (v_0, v_1) \|_{\H^s} + \sum_{j = 1}^k \| \Xi_j\|_{L^2([0, 1]; W^{s-1, \infty})}\]

\noi
for $\pmb{\Xi} = (v_0, v_1,  \Xi_1, \Xi_2, \dots, \Xi_k ) \in \mathcal{X}^s(\T^2)$.
Then, we have the following local well-posedness result for \eqref{SNLW14}.

\begin{proposition}\label{PROP:LWP}
Given an integer $k \geq 2$, 
there exists $\eps_k > 0$ such that,
for $0 \leq  \eps < \eps_k$,  
\eqref{SNLW14} is unconditionally local well-posed
in $\mathcal{X}^{1-\eps}(\T^2)$.
More precisely, 
given an enhanced data set: 
\begin{align}
\pmb{\Xi} = (v_0, v_1,  \Xi_1, \Xi_2, \dots, \Xi_k ) \in \mathcal{X}^{1-\eps}(\T^2), 
\label{data1}
\end{align}

\noi
there exist $T = T(\|\pmb{\Xi}\|_{\mathcal{X}^{1-\eps}}) \in (0, 1]$
and a unique solution $v$ to \eqref{SNLW14} in the class:
\begin{align}
C([0, T]; H^{1-\eps}(\T^2)).
\label{X1}
\end{align}
In particular, the uniqueness of $v$ 
holds in the entire class \eqref{X1}.
Furthermore, the solution map$:
\pmb{\Xi} \in \mathcal{X}^{1-\eps}(\T^2)
\mapsto 
v \in C([0, T; H^{1-\eps}(\T^2))$
is locally Lipschitz continuous.

\end{proposition}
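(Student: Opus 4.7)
The plan is to close a Banach fixed-point argument for the Duhamel formulation
\begin{equation*}
\Gamma(v)(t) = \dt\D(t) v_0 + \D(t)(v_0 + v_1) - \int_0^t \D(t-t')\, N(v)(t')\, dt',
\end{equation*}
with $N(v) = \sum_{\l=0}^k \binom{k}{\l} \Xi_\l v^{k-\l}$, directly in a ball inside $C([0,T]; H^{1-\eps}(\T^2))$, without recourse to any auxiliary Strichartz space. Since the damped wave kernel $\D(t)$ in \eqref{lin2} has Fourier symbol of size $O(\jb{n}^{-1})$, it is bounded $H^{-\eps} \to H^{1-\eps}$, and the linear evolution $\dt\D(t) v_0 + \D(t)(v_0 + v_1)$ is controlled in $C_T H^{1-\eps}$ by $\|(v_0, v_1)\|_{\H^{1-\eps}}$. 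Minkowski's inequality then yields
\begin{equation*}
\Big\| \int_0^t \D(t-t') F(t')\, dt' \Big\|_{C_T H^{1-\eps}} \lesssim \| F \|_{L^1_T H^{-\eps}},
\end{equation*}
so the task reduces to estimating $\| N(v) \|_{L^1_T H^{-\eps}}$.

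The heart of the matter is, for each $0 \le \l \le k$, to bound $\| \Xi_\l v^{k-\l} \|_{H^{-\eps}}$ in terms of $\| \Xi_\l \|_{W^{-\eps, \infty}}$ and $\| v \|_{H^{1-\eps}}$. For $\l = 0$ (where $\Xi_0 \equiv 1$) one simply writes $\| v^k \|_{H^{-\eps}} \lesssim \| v^k \|_{L^{2/(1+\eps)}} \lesssim \| v \|_{L^{2k/(1+\eps)}}^k$, which is controlled by $\| v \|_{H^{1-\eps}}^k$ through the two-dimensional Sobolev embedding provided $\eps(k-1) < 1$. For $1 \le \l \le k-1$, Lemma \ref{LEM:bilin}\,(ii) with $r = 2$, $s = \eps$, $p = \infty$, $d = 2$ gives
\begin{equation*}
\| \Xi_\l v^{k-\l}\|_{H^{-\eps}} \lesssim \| \Xi_\l \|_{W^{-\eps, \infty}} \cdot \| v^{k-\l}\|_{W^{\eps, q}}, \qquad q = \tfrac{2}{1+\eps},
\end{equation*}
and iterated application of the fractional Leibniz rule (Lemma \ref{LEM:bilin}\,(i)) combined with the Sobolev embeddings $H^{1-\eps}(\T^2) \embeds W^{\eps, q_1}$ for $q_1 < 1/\eps$ and $H^{1-\eps}(\T^2) \embeds L^{q_2}$ for $q_2 < 2/\eps$ bounds this by $\| v \|_{H^{1-\eps}}^{k-\l}$, where matching of H\"older exponents demands $\eps(k-\l) < 1$. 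The case $\l = k$ is trivial. All constraints are met uniformly in $\l$ as soon as $\eps < \eps_k := 1/(k-1)$; integrating in time and applying Cauchy-Schwarz to the $L^2_t$ norm of $\Xi_\l$ yields
\begin{equation*}
\| N(v) \|_{L^1_T H^{-\eps}} \lesssim T^{1/2} \bigl(1 + \| \pmb{\Xi}\|_{\mathcal{X}^{1-\eps}}\bigr)\bigl(1 + \| v \|_{C_T H^{1-\eps}}\bigr)^k.
\end{equation*}

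Choosing $T = T(\|\pmb{\Xi}\|_{\mathcal{X}^{1-\eps}}) \in (0,1]$ small enough, $\Gamma$ maps the ball of radius $\sim \|\pmb{\Xi}\|_{\mathcal{X}^{1-\eps}}$ in $C([0,T]; H^{1-\eps})$ into itself, and the fully analogous difference estimate $\| \Gamma(v) - \Gamma(w)\|_{C_T H^{1-\eps}} \lesssim T^{1/2}(\cdots)\| v - w \|_{C_T H^{1-\eps}}$ makes it a contraction. Unconditional uniqueness in the full class $C([0,T]; H^{1-\eps})$ is then immediate, because the difference estimate uses only the norm $\|\cdot\|_{C_T H^{1-\eps}}$, so any two solutions satisfy a closed inequality on their difference that forces agreement after possibly shrinking $T$ and iterating. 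Local Lipschitz dependence on $\pmb{\Xi} \in \mathcal{X}^{1-\eps}$ is a routine byproduct of the same estimates. The main obstacle is the product/Sobolev balance of the second paragraph: one trades the $\eps$-derivative loss coming from $\Xi_\l \in W^{-\eps,\infty}$ against the $\eps$-derivative gain afforded by Sobolev embedding of $v^{k-\l}$, and this trade becomes progressively tighter as $k$ grows, forcing $\eps_k = 1/(k-1) \to 0$.
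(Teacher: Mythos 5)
Your proposal is correct and follows essentially the same route as the paper: a contraction mapping directly in $C([0,T];H^{1-\eps}(\T^2))$ exploiting the one-derivative smoothing of $\D(t)$, Sobolev's inequality for the $\l=0$ term, the product estimates of Lemma \ref{LEM:bilin} for the terms with $1\le \l\le k-1$, and a standard continuity/iteration argument for unconditional uniqueness. The only cosmetic difference is your exponent splitting — placing $\Xi_\l$ at the endpoint $p=\infty$ in Lemma \ref{LEM:bilin}\,(ii) (which should be a large finite $p$, as in the paper's choice $L^{2/\eps}_x$, harmless on $\T^2$) — which yields the slightly larger admissible threshold $\eps_k = \tfrac1{k-1}$ versus the paper's $\tfrac1{2(k-1)}$, a difference immaterial to the statement.
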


We point out that 
Proposition~\ref{PROP:LWP}
is completely deterministic.
Once we prove Proposition~\ref{PROP:LWP}, 
the claimed local well-posedness of the renormalized SdNLW~\eqref{SNLW13}
follows
from 
Proposition~\ref{PROP:LWP} 
and 
Lemma \ref{LEM:Psi}, 
stating that 
the (random) enhanced data set
$\pmb{\Xi} = (v_0, v_1,  \Phi, :\!\Phi^2\!:, \dots, :\!\Phi^k\!:\, )$
almost surely belongs to 
 $\mathcal{X}^{1-\eps}(\T^2)$,
 $\eps > 0$.

\begin{proof}

By writing \eqref{SNLW14} in the Duhamel formulation, we
have 
 \begin{align}
 \begin{split}
v(t) = \G (v) \deff
\ &  \dt\D(t)v_0 + \D(t)(v_0+v_1)\\
  & +
\,  \sum_{\ell=0}^k  {k\choose \ell}
\int_0^t \D(t - t') 
\big( \Xi_\l\,  v^{k-\ell}\big)(t') dt', 
\end{split}
\label{SNLW15}
\end{align}

\noi
where the map $\G = \G_{\pmb{\Xi}}$ depends on the enhanced data set
$\pmb{\Xi}$ in \eqref{data1}.
Fix  $0 < T < 1$.

We first treat the case $\l =0$.
From  \eqref{lin2} and applying Sobolev's inequality twice, we obtain
\begin{align}
\begin{split}
\bigg\|\int_0^t \D(t - t') 
  v^k(t') dt'\bigg\|_{C_T H^{1-\eps}_x}
&   \les T \| v^k\|_{C_T H^{-\eps}_x}
\les T \| v^k\|_{C_T L^{\frac{2}{1+\eps}}_x}
 \les T \| v\|_{C_T L^{\frac{2k}{1+\eps}}_x}^k\\
& \les T \| v\|_{C_T H^s_x}^k, 
\end{split}
\label{X2}
\end{align}

\noi
provided that 
\begin{align*}
0 \leq \eps \leq \frac{1}{k-1}.
\end{align*}

\noi
For $1 \leq \l \leq k-1$, 
it follows from 
Lemma \ref{LEM:bilin}\,(ii)
and then (i) followed by Sobolev's inequality that 
\begin{align}
\begin{split}
\bigg\|\int_0^t \D(t - t') 
\big( \Xi_\l\,  v^{k-\ell}\big)(t') dt'\bigg\|_{C_T H^{1-\eps}_x}
& \les T^\frac{1}{2}\| \Xi_\l\,  v^{k-\ell}\|_{L^2_T H^{-\eps}_x}\\
& \les T^\frac{1}{2}\|\jb{\nb}^{-\eps} \Xi_\l
\|_{L^2_T L^\frac{2}{\eps}_x}
\| \jb{\nb}^\eps v^{k-\ell}\|_{C_T L^{2}_x}\\
& \les T^\frac{1}{2}\|\pmb{\Xi}\|_{\mathcal{X}^{1-\eps}}
\| \jb{\nb}^\eps v\|_{C_T L^{2(k-\l)}_x}^{k-\l}\\
& \les T^\frac{1}{2}\|\pmb{\Xi}\|_{\mathcal{X}^{1-\eps}}
\|  v\|_{C_T H^{1-\eps}_x}^{k-\l}, 
\end{split}
\label{X4}
\end{align}

\noi
provided that 
\begin{align}
0 \leq \eps \leq \frac 1{2(k-1)}.
\label{X5}
\end{align}

\noi
Lastly, from \eqref{lin2}, we have
\begin{align}
\begin{split}
\bigg\|\int_0^t \D(t - t') 
  \Xi_k(t') dt'\bigg\|_{C_T H^{1-\eps}_x}
&   \les T^\frac{1}{2} \| \Xi_k\|_{L^2_T H^{-\eps}_x}
\leq
 T^\frac{1}{2}\|\pmb{\Xi}\|_{\mathcal{X}^{1-\eps}}.
\end{split}
\label{X6}
\end{align}

Putting
 \eqref{SNLW15}, \eqref{X2}, \eqref{X4}, and \eqref{X6}
 together, we have
\begin{align*}
\|\G(v)\|_{C_T H^{1-\eps}_x}
&   \le
C_1 \| (v_0, v_1)\|_{\H^{1-\eps}}
+ 
C_2  T^\frac{1}{2}
 \big(1 + \|\pmb{\Xi}\|_{\mathcal{X}^{1-\eps}}\big)
\big( 1 +  \| v\|_{C_T H^{1-\eps}_x}\big)^k, 
\end{align*}

\noi
as long as \eqref{X5} is satisfied.
An analogous difference estimate also holds.
Therefore, 
by choosing
$T = T(\|\pmb{\Xi}\|_{\mathcal{X}^{1-\eps}}) >0$ sufficiently small, 
we conclude that $\G$ is a contraction in the ball 
$B_R \subset  C([0, T]; H^{1-\eps}(\T^2))$ of radius
$R = 2C_1 \| (v_0, v_1)\|_{\H^{1-\eps}} + 1$.
At this point, the uniqueness holds only in the ball $B_R$
but by a standard continuity argument, 
we can extend the uniqueness to hold
in the entire $C([0, T]; H^{1-\eps}(\T^2))$.
We omit details. 
\end{proof}

Next, we provide a brief discussion on invariance
of the truncated Gibbs measure 
$\rhoo_N$ in~\eqref{GibbsN}
under the dynamics of 
the   renormalized truncated SdNLW
\eqref{SNLW10}
for $u_N$.

Given $N \in \N$, define
the marginal probabilities measures
$\muu_{1, N}$ and $\muu_{1, N}^\perp$
on $\P_N \H^{-\eps}(\T^2)$
and $\P_N^\perp \H^{-\eps}(\T^2)$, respectively, 
as 
   the induced probability measures
under the following maps:
\begin{equation*}
\o \in \O \longmapsto (\P_N u^1(\o), \P_N u^2(\o))
 \end{equation*}

\noi
for 
$\muu_{1, N}$
and
\begin{equation*}
\o \in \O \longmapsto (\P_N^\perp u^1(\o), \P_N^\perp u^2(\o))
 \end{equation*}

\noi
for $\muu_{1, N}^\perp$, 
 where 
$u^1$ and $u^2$ are as in \eqref{series}.
Then, we have
\begin{align}
\muu_1 = 
\muu_{1, N} \otimes \muu_{1, N}^\perp.
\label{X7}
\end{align}

\noi
From \eqref{GibbsN} and \eqref{X7}, 
we then  have
\begin{align}
\rhoo_N = \vec \nu_{N} \otimes \muu_{1, N}^\perp.
\label{X8a}
\end{align}

\noi
where $\vec \nu_N$ is given by
\begin{align*}
d \vec \nu_N= Z_N^{-1}R_N(u)d\muu_{1, N}
\end{align*}

\noi
with the density $R_N$ as in \eqref{R1}.

Recalling  the decomposition \eqref{decomp2}, 
we see that
 the dynamics
for the high frequency part
$\P_N^\perp u_N = \P_N^\perp\Phi$
 is given by 
\begin{align}
\dt^2 \P_N^\perp\Phi + \dt  \P_N^\perp\Phi +(1-\Dl) \P_N^\perp\Phi  = \sqrt{2} \P_N^\perp\xi.
\label{X8}
\end{align}

\noi
This is a linear dynamics and thus we can readily verify
that the Gaussian measure $\muu_{1, N}^\perp$
is invariant under the dynamics of \eqref{X8}
(for example, by studying \eqref{X8} for each frequency $|n|> N$
on the Fourier side).

On the other hand, the low frequency part 
$\P_N u_N$ satisfies
\eqref{SNLW11a}.
With $(u_N^1, u_N^2) = (\P_N u_N, \dt \P_N u_N)$, we
can write
\eqref{SNLW11a}  in the following Ito formulation:
\begin{align}
\begin{split}
d  \begin{pmatrix}
u_N^1 \\ u_N^2
\end{pmatrix}
& + \Bigg\{
\begin{pmatrix}
0  & -1\\
1-\Dl &  0
\end{pmatrix}
 \begin{pmatrix}
u_N^1 \\ u_N^2
\end{pmatrix}
 +  
\begin{pmatrix}
0 \\ \P_N\big( :\!(u_N^1)^k\!:\big)
\end{pmatrix}
\Bigg\} dt \\
&   = 
  \begin{pmatrix}
0  \\ - u_N^2 dt + \sqrt 2\P_N dW
\end{pmatrix} .
\end{split}
\label{SNLW16}
\end{align}

\noi
This shows that the generator $\L^N$ for \eqref{SNLW16}
 can be written as $\L^N = \L^N_1 + \L^N_2$, 
 where $\L^N_1$ denotes the generator
 for the deterministic NLW with the truncated nonlinearity:
\begin{align}
\begin{split}
d  \begin{pmatrix}
u_N^1 \\ u_N^2
\end{pmatrix}
 + \Bigg\{
\begin{pmatrix}
0  & -1\\
1-\Dl &  0
\end{pmatrix}
 \begin{pmatrix}
u_N^1 \\ u_N^2
\end{pmatrix}
 +  
\begin{pmatrix}
0 \\ \P_N\big( :\!(u_N^1)^k\!:\big)
\end{pmatrix}
\Bigg\} dt 
   = 0 
\end{split}
\label{SNLW17}
\end{align}

\noi
and $\L^N_2$ denotes the generator
for the Ornstein-Uhlenbeck process 
(for the second component $u_N^2$):
\begin{align}
\begin{split}
d  \begin{pmatrix}
u_N^1 \\ u_N^2
\end{pmatrix}
   = 
  \begin{pmatrix}
0  \\ - u_N^2 dt + \sqrt 2\P_N dW
\end{pmatrix} .
\end{split}
\label{SNLW18}
\end{align}

Note that \eqref{SNLW17} is a Hamiltonian equation with the Hamiltonian:
\begin{align*}
E(u^1_N, u^2_N ) = \frac{1}{2}\int_{\T^2}\big( (u^1_N)^2 +  |\nb u^1_N|^2\big) dx
+ 
\frac{1}{2}\int_{\T^2} (u^2_N)^2dx
+ \log\big(R_N(u^1_N)\big), 
\end{align*}
 
\noi
where $R_N$ is as in \eqref{R1}.
Then, from  the conservation of the Hamiltonian 
$E(u^1_N, u^2_N )$ and Liouville's theorem
(on a finite-dimensional phase space), 
we conclude that $\vec \nu_N$ is invariant under the dynamics of \eqref{SNLW17}.
In particular, we have $(\L^N_1)^*\vec \nu_N = 0$.
On the other hand, 
by recalling that the Ornstein-Uhlenbeck process
preserves the standard Gaussian measure, 
we conclude that $\vec \nu_N$ is also invariant under the dynamics of \eqref{SNLW18}
since
 the measure $\vec \nu_N$ is nothing but the  white noise
(projected onto the low frequencies $\{|n|\leq N\}$)
on the second component $u_N^2$.
Thus, we have $(\L^N_2)^*\vec \nu_N = 0$.
Hence, we obtain
\[(\L^N)^*\vec \nu_N = 
(\L^N_1)^*\vec \nu_N +  (\L^N_2)^*\vec  \nu_N = 0.\]

\noi
This shows invariance of $\vec \nu_N$ under \eqref{SNLW16}
and hence under \eqref{SNLW11a}.

Therefore, from \eqref{X8a}
and invariance of $\vec \nu_N$ and $\muu^\perp_{1, N}$ 
under \eqref{SNLW16} and \eqref{X8}, respectively,  we conclude that  the truncated Gibbs measure 
$\rhoo_N$ in~\eqref{GibbsN}
is invariant 
under the dynamics of 
the   renormalized truncated SdNLW
\eqref{SNLW10}.

The rest of the proof of 
Theorem \ref{THM:GWP2} 
follows from a standard application of Bourgain's invariant measure argument
and thus we omit details.
See, for example,~\cite{ORTz}
for details.

\begin{ackno}\rm
T.O.~was supported by the European Research Council (grant no.~637995 ``ProbDynDispEq'' and grant no.~864138 ``SingStochDispDyn''). L.T.~was supported by the European Research Council (grant no. 637995 ``ProbDynDispEq''). M.G., H.K., and L.T ~were supported by the Deutsche Forschungsgemeinschaft (DFG, German Research Foundation) through the Hausdorff Center for Mathematics under Germany's Excellence Strategy - EXC-2047/1 - 390685813 and through CRC 1060 - project number 211504053.
%
The authors would like to thank the anonymous referees for helpful comments.
\end{ackno}

\end{document}